\documentclass[a4paper,centertags,oneside,12pt]{amsart}
\usepackage{mathrsfs}
\usepackage{appendix}
\usepackage{amssymb}
\usepackage{fancyhdr}
\usepackage{charter}
\usepackage{typearea}
\usepackage{pdfsync}
\usepackage{color}
\usepackage[colorlinks,linkcolor=blue,anchorcolor=blue,citecolor=green]{hyperref}
\usepackage{mathrsfs}
\usepackage[a4paper,top=3cm,bottom=3cm,left=3cm,right=3cm]{geometry}

\usepackage{enumitem}
\usepackage{color}
\setlist[1]{itemsep=5pt}
\newcommand{\comment}[1]{}
\newcommand{\ov}[1]{\overline{#1}}

\makeatletter
      \def\@setcopyright{}
      \def\serieslogo@{}
      \makeatother

\newcommand{\mbb}{\mathbb}

\newcommand{\pa}{\partial}

\newcommand{\Om}{\Omega}

\newcommand{\be}{\beta}

\newcommand{\Ga}{\Gamma}

\renewcommand{\Im}{\operatorname{Im}}

\newcommand{\abs}[1]{\left\vert#1\right\vert}

\newtheorem{theorem}{Theorem}[section]
\newtheorem{lemma}[theorem]{Lemma}
\newtheorem{corollary}[theorem]{Corollary}

\newtheorem{proposition}[theorem]{Proposition}

\newtheorem{definition}[theorem]{Definition}

\newtheorem{remark}[theorem]{Remark}

\numberwithin{equation}{section}

\begin{document}
\title{The invariant Szeg\H{o} metric on Egg domains}
\author{Anjali Bhatnagar}
\address{School of Mathematics and Statistics, Wuhan University, Wuhan, Hubei 430072, China.}
\email{anj.bhatngr28@gmail.com}
\author{Jiliang Fan}
\address{School of Mathematics and Statistics, Wuhan University, Wuhan, Hubei 430072, China.}
\email{jiliangfan@whu.edu.cn}
\keywords{Szeg\H o kernel, Invariant Szeg\H o metric, Egg domains, $L^2$-cohomology}
\subjclass{Primary: 32F45; Secondary: 32A25}

\begin{abstract}

  \bigskip\bigskip

We study the Fefferman--Szeg\H{o} metric on egg domains
\[
\mathcal D_{2m}=\{(z,w)\in\mathbb C^2: |z|^2+|w|^{2m}<1\},\qquad\qquad\qquad m\in\mathbb Z^+.
\]
Our first main result establishes the existence of the Fefferman--Szeg\H{o} kernel on $\mathcal{D}_{2m}$ by verifying that the Fefferman weight lies in the Muckenhoupt class $A_2(\partial\mathcal{D}_{2m})$. We then derive an explicit closed-form expression for this kernel, demonstrate that its blowup occurs precisely on the boundary diagonal, and determine its boundary asymptotic behaviour. Using this kernel, we compute the associated Fefferman--Szeg\H{o} metric and its Ricci curvature. As applications, we prove several rigidity results: the metric is K\"ahler--Einstein if and only if $m=1$; proportionality to the Bergman metric or to some complete K\"ahler metric $g_m^{\mathcal D_{2m}}$ is also equivalent to $m=1$. Finally, we establish the vanishing of the $L^2$-cohomology outside the middle dimension for the Fefferman--Szeg\H{o} metric.
\end{abstract}

\maketitle

\section{Introduction}
The purpose of this article is to contribute to the theory of invariant Szeg\H{o} metrics. The Szeg\H{o} metric is defined using the Szeg\H{o} kernel, in analogy with the Bergman metric. Unlike the Bergman metric, the Szeg\H{o} metric is not biholomorphically invariant in general. This is because the Euclidean surface area measure $\sigma_{\operatorname E}$ does not transform well under biholomorphic mappings. 

To resolve this issue, Fefferman introduced an invariant surface area measure $\sigma_{\operatorname F}$ on $C^\infty$-smooth bounded strongly pseudoconvex domains \cite[p.~259]{f79}. The Szeg\H{o} metric associated with $\sigma_{\operatorname F}$ is called the Fefferman--Szeg\H{o} metric. Its systematic study was initiated by Barrett--Lee \cite{bl14}, who investigated, in particular, its relationship with the Bergman and Carath\'eodory metrics. This Fefferman--Szeg\H{o} metric has attracted considerable attention in recent years. Krantz studied representative coordinates, analytic continuation, and completeness properties of this metric in \cite{k19}; see also \cite{k21}. Boundary behaviour and related invariants of the Fefferman--Szeg\H{o} metric were studied in \cite{m08,b26}. In complex dimension one, this metric has been studied through its intrinsic properties, including geodesics (with higher-dimensional analogues treated in \cite{b25}), curvature, and $L^2$-cohomology in \cite{bb24}. Barrett \cite{b06} and Gupta \cite{g17,Jg17} employed ideas from convex geometry to extend Fefferman's surface area measure to general bounded pseudoconvex domains. A natural question raised by Yuan \cite{y25} is how these extended measures fit into the invariant Szeg\H{o} theory.

A classical problem in complex geometry is to characterize the unit ball among bounded domains using invariant metrics. Cheng's conjecture---that a bounded $C^\infty$-smooth strongly pseudoconvex domain in $\mathbb C^n$ with K\"{a}hler--Einstein Bergman metric must be biholomorphic to the unit ball $\mathbb B^n$---was resolved in dimension two \cite{fw97,ns06}, in higher dimensions \cite{hx21}, and extended to Stein spaces with compact $C^\infty$-smooth strongly pseudoconvex boundary \cite{hl23}. Addressing Yau's broader question \cite{y82}, Savale--Xiao \cite{sx25} confirmed it for finite-type domains in $\mathbb C^2$, with subsequent extensions to real analytic and $h$-extendible pseudoconvex domains in $\mathbb C^n$ by Hsiao, Huang and Li \cite{hhl26}. Moreover, a recent result shows that in $\mathbb C^n$, an unbounded pseudoconvex domain cannot carry a K\"{a}hler--Einstein Bergman metric if its boundary contains a non-smooth strongly pseudoconvex polyhedral point \cite{hjl25}.

 In a parallel development, Yuan \cite{y25} established an analogue of Cheng’s conjecture for the Fefferman--Szeg\H o metric. Further analytic and rigidity properties of the Fefferman–Szeg\H o metric---including $L^2$-cohomology, bounded geometry, local sphericity of the boundary, and a ball characterization result---were obtained in \cite{BF26}. In the same work \cite{y25}, following a natural definition of Duong--Lanzani--Li--Wick \cite{dlw25, dllw26} of weighted Hardy space, Yuan studied the weighted  Szeg\H o metric on $C^2$-smooth bounded domains $\Om$. In this setting one considers measures of the form
\[
d\sigma_\varpi=\varpi d\sigma_{\operatorname E},
\]
where $\varpi$ is a nonnegative locally integrable function on $\partial\Omega$. Under the assumption that $\varpi$ belongs to the Muckenhoupt class $A_2(\partial\Omega)$, the corresponding weighted Hardy space is a reproducing kernel Hilbert space. Consequently, the associated weighted Szeg\H{o} kernel and weighted Szeg\H{o} metric are well defined. This provides a natural framework for extending the Fefferman--Szeg\H{o} metric beyond the category of strongly pseudoconvex domains. 

In this paper, we develop the theory of the Fefferman--Szeg\H o metric $g_{\operatorname{FS}}^{\mathcal{D}_{2m}}$ on the family of egg domains \[\mathcal{D}_{2m}=\{|z|^2+|w|^{2m}<1\}\subset \mathbb C^2.\]
For $m=1$, $\mathcal D_{2m}$ is the unit ball $\mathbb B^2$ in $\mathbb C^2$; for $m>1$, the domain is weakly pseudoconvex and its boundary has Levi degeneracy along the circle
\[
\{(e^{i\theta},0):\theta\in\mathbb R\}.
\]
Thus, this family provides a natural model for studying the Fefferman--Szeg\H{o} theory in the presence of weak pseudoconvexity. Yuan illustrated this on  $\mathcal{D}_{4}$ in Example 2.20 of \cite{y25}, verifying that the weight function in $\sigma_{\operatorname{F}}$ lies in $A_2(\pa \mathcal{D}_4)$.
We extend this result to the entire family $\mathcal{D}_{2m}$ using geometric arguments, in contrast to the analytic approach employed there.
\begin{theorem}\label{feff_density_Ap}
For any $m\in\mathbb Z^+$, let $\mathcal{D}_{2m}=\{(z,w)\in\mathbb{C}^2:|z|^2+|w|^{2m}<1\}$. Let $\varpi_{\operatorname{F}}$ denote the weight function in the Fefferman surface area measure $\sigma_{\operatorname{F}}$ on $\pa \mathcal{D}_{2m}$, i.e., $d\sigma_{\operatorname{F}}=\varpi_{\operatorname{F}}d\sigma_{\operatorname{E}}$. Then, we have \[\varpi_{\operatorname{F}}\in A_2(\pa\mathcal{D}_{2m}).\] 
\end{theorem}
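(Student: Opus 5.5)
We first make the Fefferman weight explicit. With defining function $\rho=|z|^2+|w|^{2m}-1$, Fefferman's measure is $d\sigma_{\operatorname F}=c\,M(\rho)^{1/3}|\nabla\rho|^{-1}d\sigma_{\operatorname E}$, where
\[
M(\rho)=-\det\begin{pmatrix}0&\rho_{\bar z}&\rho_{\bar w}\\ \rho_z&\rho_{z\bar z}&\rho_{z\bar w}\\ \rho_w&\rho_{w\bar z}&\rho_{w\bar w}\end{pmatrix}.
\]
Here $\rho_{z\bar z}=1$, $\rho_{w\bar w}=m^2|w|^{2m-2}$, $\rho_{z\bar w}=0$, $\rho_z=\bar z$, $\rho_w=m\bar w|w|^{2m-2}$. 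Hence
\[
M=m^2|w|^{2m-2}|z|^2+m^2|w|^{4m-2}=m^2|w|^{2m-2}\bigl(|z|^2+|w|^{2m}\bigr)=m^2|w|^{2m-2}
\]
on $\partial\mathcal D_{2m}$. Also $|\nabla\rho|\simeq(|z|^2+m^2|w|^{4m-2})^{1/2}$ is bounded above and below on $\partial\mathcal D_{2m}$, since $|z|$ and $|w|$ cannot both be small there. Therefore
\[
\varpi_{\operatorname F}\simeq |w|^{(2m-2)/3}.
\]
This is continuous and vanishes only on the circle $\{w=0\}$.

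Next we record the geometry needed for $A_2(\partial\Omega)$. That class is defined with respect to the Euclidean surface balls $B(p,r)=\partial\Omega\cap\{|q-p|<r\}$ with measure $\sigma_{\operatorname E}$. We must show that
\[
\sup_{p,r}\Bigl(\frac1{\sigma(B)}\int_B\varpi\Bigr)\Bigl(\frac1{\sigma(B)}\int_B\varpi^{-1}\Bigr)<\infty .
\]
Since $\varpi$ is bounded above and below away from $\{w=0\}$, the only issue is balls near the circle $\Gamma=\{(e^{i\theta},0)\}$. For $r$ larger than a fixed $r_0$, the averages are controlled by global integrals. These are finite because $\varpi^{-1}\simeq|w|^{-(2m-2)/3}$ is integrable: near $\Gamma$ the boundary is a smooth graph over $(\theta,w)\in S^1\times\{|w|<\delta\}$, a real 3-dimensional set, and $(2m-2)/3<2$ for... this fails once $m\ge4$. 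So we must use the correct local parametrization. Near $\Gamma$ we write $z=e^{i\theta}(1-|w|^{2m})^{1/2}$; then $(\theta,w)$ are smooth coordinates with $d\sigma_{\operatorname E}\simeq d\theta\,dA(w)$. In these coordinates $\varpi\simeq|w|^{\alpha}$ with $\alpha=(2m-2)/3$, a power weight in the 2-dimensional $w$-variable times the trivial weight in $\theta$.

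The key step is the classical fact that $|x|^\alpha\in A_2(\mathbb R^d)$ if and only if $-d<\alpha<d$. With $d=2$ this holds only when $(2m-2)/3<2$, i.e.\ $m<4$. This conflicts with the claim for all $m$, so we recheck the exponent. Fefferman's measure is $M^{1/(n+1)}$ with $n=2$, which gives $M^{1/3}$. The determinant should then be recomputed with the convention normalized by $|\partial\rho|$. If it stands, then $\varpi^{-1}$ has local integrability exponent $-(2m-2)/3>-2$ only for $m<4$. The main obstacle is therefore pinning down the exact exponent; the expected correct outcome is that the weight behaves like $|w|^{\beta}$ with $|\beta|<2$ uniformly in $m$.

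Granting a power weight $|w|^\beta$ with $-2<\beta<2$, the verification proceeds as follows. Comparable Euclidean balls correspond, via the bi-Lipschitz chart $(\theta,w)$, to products $I\times D$ of an arc and a disc of radius $\simeq r$. The product structure factors the $A_2$ quotient. The $w$-factor is the standard power-weight computation: for discs with $\operatorname{dist}(D,0)\ge 2r$ the weight is essentially constant, while for discs near the origin both averages are $\simeq r^{\pm\beta}$ by integrating in polar coordinates. Finally, doubling of $\sigma_{\operatorname E}$ handles the transition between the local and global regimes.
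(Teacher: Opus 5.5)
\textbf{Genuine gap.} Your computation $\varpi_{\operatorname F}\approx|w|^{(2m-2)/3}$ is correct; it is exactly the paper's \eqref{asym_weightfn}. Your diagnosis is also correct: with Euclidean surface balls and $\sigma_{\operatorname E}$ one has $d\sigma_{\operatorname E}\approx d\theta\,dA(w)$ near $L=\{(e^{i\theta},0)\}$, so $\varpi_{\operatorname F}^{-1}\approx|w|^{-(2m-2)/3}$ is not even locally integrable once $m\ge4$. Rechecking the exponent therefore cannot rescue the argument. Your final step, ``granting a power weight $|w|^\beta$ with $-2<\beta<2$'', assumes something that contradicts what you just proved. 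As written, the proposal covers at best $m\le 3$. Even there it proves the Euclidean $A_2$ property, which is not the class the paper uses.

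\textbf{Missing idea: the choice of space of homogeneous type.} The paper verifies the $A_2$ condition on $(\pa\mathcal D_{2m},d,dS)$. Here $d(\zeta,\eta)=|v(\zeta,\eta)|+|v(\eta,\zeta)|$ is Hansson's quasimetric with $v(\zeta,\eta)=\langle\pa\rho(\zeta),\zeta-\eta\rangle$, and $dS=(2\pi i)^{-2}\pa\rho\wedge\bar\pa\pa\rho$. Both ingredients change the scaling near the Levi-degenerate circle:
\begin{itemize}
\item By the paper's lemmas, $d(\Phi(\theta,w),(1,0))\approx|\theta|+|w|^{2m}$ and $d(\zeta,L)\approx|w|^{2m}$. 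So small balls at $(1,0)$ are comparable to $\{|\theta|\lesssim R,\ |w|\lesssim R^{1/(2m)}\}$.
\item $\Phi^*dS\approx s^{2m-1}\,d\theta\,ds\,d\varphi$ with $s=|w|$ carries the Levi-determinant factor $|w|^{2m-2}$. So these balls have $dS$-mass $\approx R^2$.
\end{itemize}
In the transverse variable $u=|w|^{2m}\approx d(\cdot,L)$ one gets $dS\approx d\theta\,du\,d\varphi$ and $\varpi_{\operatorname F}\approx u^{(m-1)/(3m)}$. This exponent lies in $[0,1/3)$ for every $m$, and that is where the uniform bound you were hoping for actually comes from. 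Concretely, $\varpi_{\operatorname F}^{\pm1}$ are $dS$-integrable because $\int_0^1 s^{(8m-5)/3}\,ds$ and $\int_0^1 s^{(4m-1)/3}\,ds$ converge.

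The paper then splits into three cases:
\begin{itemize}
\item $d(\zeta,L)\ge 2Kr$: the weight is essentially constant on the ball, since $d(\cdot,L)\approx|w|^{2m}$.
\item $r\ge\delta$: the averages are bounded by the global integrals.
\item $d(\zeta,L)<2Kr$ and $r<\delta$: enlarge to a ball centred on $L$, use doubling and rotational invariance to move it to $(1,0)$, and compute the averages $\approx R^{\pm(m-1)/(3m)}$ explicitly.
\end{itemize}

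The measure is the decisive ingredient. The paper's passing remark that $dS\approx d\sigma_{\operatorname E}$ is not accurate near $L$; the proof actually relies on the vanishing factor $s^{2m-1}$ in $\Phi^*dS$. If $\sigma_{\operatorname E}$ were used in place of $dS$, your obstruction for $m\ge4$ would persist for any choice of balls.
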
 
This result ensures the existence of the Fefferman–Szeg\H o kernel on $\mathcal{D}_{2m}$. We then compute the Fefferman--Szeg\H o kernel $S_{\mathcal{D}_{2m}}$ in its closed form.
\begin{theorem}\label{feff-sgo-egg-doms}
    For any $m\in\mathbb Z^+$, let $\mathcal{D}_{2m}=\{(z,w)\in\mathbb{C}^2:|z|^2+|w|^{2m}<1\}$. Then the Fefferman-Szeg\H{o} kernel is given by
    \begin{equation}\label{diag_sgoker}
        S_{\mathcal{D}_{2m}}(\zeta,\eta)=c_{\mathcal{D}_{2m}}\frac{(m+2)(1-z_1 \ov z_2)^{\frac{1}{m}}+(1-m)w_1\ov w_2}{3m(1-z_1\ov z_2)^{\frac{4m-1}{3m}}\big((1-z_1 \ov z_2)^{\frac{1}{m}}-w_1\ov w_2\big)^2}
    \end{equation} 
    where $\zeta=(z_1,w_1),\;\eta=(z_2,w_2)\in \mathcal{D}_{2m}\;\;\text{ and }\;c_{\mathcal{D}_{2m}}=m^{\frac{1}{3}}/2\pi^2$.
\end{theorem}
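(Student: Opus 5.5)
The plan is to reduce $S_{\mathcal D_{2m}}$ to an explicit orthonormal-basis series, using that Theorem~\ref{feff_density_Ap} already guarantees the weighted Hardy space $H^2(\partial\mathcal D_{2m},\sigma_{\operatorname F})$ is a reproducing kernel Hilbert space.

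\textbf{Step 1: compute the Fefferman density.} Take the defining function $\rho=|z|^2+|w|^{2m}-1$. The Fefferman measure is
\[
d\sigma_{\operatorname F}=c_2\,M(\rho)^{1/3}\,\frac{d\sigma_{\operatorname E}}{|\nabla\rho|},\qquad
M(\rho)=-\det\begin{pmatrix}0&\rho_{\bar z}&\rho_{\bar w}\\ \rho_z&\rho_{z\bar z}&\rho_{z\bar w}\\ \rho_w&\rho_{w\bar z}&\rho_{w\bar w}\end{pmatrix}.
\]
We have $\rho_{z\bar z}=1$, $\rho_{w\bar w}=m^2|w|^{2m-2}$ and $\rho_{z\bar w}=0$. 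A direct expansion gives
\[
M(\rho)=m^2|w|^{2m-2}\bigl(|z|^2+|w|^{2m}\bigr)=m^2|w|^{2m-2}\quad\text{on }\partial\mathcal D_{2m}.
\]
Hence $d\sigma_{\operatorname F}=c_2\,m^{2/3}|w|^{(2m-2)/3}\,d\sigma_{\operatorname E}/|\nabla\rho|$.

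To evaluate boundary integrals I will use the coarea identity
\[
\int_{\partial\mathcal D_{2m}}f\,\frac{d\sigma_{\operatorname E}}{|\nabla\rho|}=\frac{d}{dt}\Big|_{t=0}\int_{\{\rho<t\}}f\,dV.
\]
In the coordinates $s=|z|^2$, $u=|w|^{2m}$ and the angles, this turns every boundary integral into an integral over the segment $s+u=1$ against $d\theta_1\,d\theta_2\,ds\,du$. The factor $\tfrac1m u^{1/m-1}$ comes from $d(|w|^2)$.

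\textbf{Step 2: orthogonality and norms of monomials.} The domain and the weight are invariant under $(z,w)\mapsto(e^{i\alpha}z,e^{i\beta}w)$. Therefore the monomials $z^aw^b$ are mutually orthogonal, and they span a dense subspace since the domain is complete Reinhardt. The norm then becomes a Beta integral:
\[
\|z^aw^b\|^2=C_m\int_0^1 s^{a}(1-s)^{\frac{b+1}{m}-1+\frac{m-1}{3m}}\,ds
=C_m\,\frac{\Gamma(a+1)\,\Gamma(\gamma_b)}{\Gamma(a+1+\gamma_b)},
\]
where $\gamma_b=\frac{b+1}{m}+\frac{m-1}{3m}=\frac{3b+m+2}{3m}$. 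I will track the constant $C_m$ carefully, since it produces $c_{\mathcal D_{2m}}=m^{1/3}/2\pi^2$.

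\textbf{Step 3: summing the series.} Set $x=z_1\bar z_2$ and $y=w_1\bar w_2$. The kernel is
\[
S=\sum_{a,b}\frac{x^ay^b}{\|z^aw^b\|^2}.
\]
First sum over $a$ using the binomial series
\[
\sum_a\frac{\Gamma(a+\gamma_b)}{a!\,\Gamma(\gamma_b)}x^a=(1-x)^{-\gamma_b}.
\]
The extra factor $\Gamma(a+1+\gamma_b)/\Gamma(a+\gamma_b)=a+\gamma_b$ is handled by applying $x\,\partial_x+\gamma_b$. After this we obtain $\gamma_b(1+\gamma_b)(1-x)^{-\gamma_b-1}$ up to the factor $1/\Gamma(1)$. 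The remaining $b$-sum is then
\[
(1-x)^{-\frac{m+2}{3m}-1}\sum_b \gamma_b\,Y^b,\qquad Y=\frac{y}{(1-x)^{1/m}}.
\]
This is a linear-in-$b$ coefficient series. It sums to a rational function of $Y$ with denominator $(1-Y)^2$ and numerator of the form $(m+2)+(1-m)Y$, up to the factor $1/3m$. Clearing the powers of $(1-x)^{1/m}$ should reproduce exactly the exponent $\frac{4m-1}{3m}$ in \eqref{diag_sgoker}.

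I expect the main obstacle to be this last bookkeeping step. It requires keeping exact track of the shift by $(m-1)/3m$ coming from the weight and of the normalizing constants. Only then do the $a$-sum and $b$-sum combine precisely into the stated numerator and exponents. As a sanity check I will verify that $m=1$ recovers the Szeg\H{o} kernel of $\mathbb B^2$.
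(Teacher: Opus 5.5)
Your plan is essentially the paper's proof. The shared steps are: monomials as an orthogonal basis, norms $\|z^aw^b\|^2=C_m\,B(a+1,\gamma_b)$ with $\gamma_b=\frac{3b+m+2}{3m}$ (the paper's $\beta_k$), a binomial series in $x=z_1\bar z_2$, and then $\sum_b\gamma_bY^b=\frac{(m+2)+(1-m)Y}{3m(1-Y)^2}$ with $Y=w_1\bar w_2(1-x)^{-1/m}$. The only difference is how you obtain the boundary measure. You use the coarea formula in the variables $|z|^2$, $|w|^{2m}$, whereas the paper parametrizes $\partial\mathcal D_{2m}$ by $z=e^{i\theta}\sqrt{1-s^{2m}}$, $w=se^{i\varphi}$ and substitutes $t=s^{2m}$ afterwards. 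Your bordered determinant $M(\rho)=m^2|w|^{2m-2}(|z|^2+|w|^{2m})$ agrees on the boundary with the $\det(\rho_{i\bar j})=m^2|w|^{2m-2}$ the paper uses.

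One intermediate claim is wrong as written. In fact
\[
(x\partial_x+\gamma_b)(1-x)^{-\gamma_b}=\gamma_b x(1-x)^{-\gamma_b-1}+\gamma_b(1-x)^{-\gamma_b}=\gamma_b(1-x)^{-\gamma_b-1},
\]
not $\gamma_b(1+\gamma_b)(1-x)^{-\gamma_b-1}$. With the spurious factor $1+\gamma_b$, the $b$-coefficients would be quadratic in $b$ and would produce a $(1-Y)^{-3}$ term, which does not match the stated kernel. Your next display, with coefficient $\gamma_b$ alone, is the correct one, so simply drop that factor.

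For the constant, $C_m=2\pi^2m^{-1/3}$, and hence $c_{\mathcal D_{2m}}=m^{1/3}/2\pi^2$, comes out exactly when the denominator in $\sigma_{\operatorname F}$ is $|\partial\rho|=(|z|^2+m^2|w|^{4m-2})^{1/2}$. This is half the real gradient that appears in your coarea identity. So if you keep the real $|\nabla\rho|$ in both places, you need $c_2=2$. Your planned $m=1$ check against $\frac{1}{2\pi^2}(1-z_1\bar z_2-w_1\bar w_2)^{-2}$ pins this down.
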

A key consequence of this explicit formula is that the kernel $S_{\mathcal{D}_{2m}}(\zeta,\eta)$ blows up precisely on the boundary diagonal $E=\{(\zeta,\eta)\in\partial\mathcal{D}_{2m}\times\partial\mathcal{D}_{2m}:\zeta=\eta\}$; see Proposition \ref{deno-feff-sgo-ker}. In particular, for each fixed $\eta\in\mathcal{D}_{2m}$, the function $S_{\mathcal{D}_{2m}}(\cdot,\eta)$ extends holomorphically to a neighbourhood of the closure of $\mathcal{D}_{2m}$; see Corollary \ref{Sgoker_ext_holo}. Consequently, the space of holomorphic functions on the closure of $\mathcal{D}_{2m}$ is dense in the Fefferman--Hardy space for $\mathcal{D}_{2m}$; see Corollary \ref{density}. Hence the definition of the Fefferman–Hardy space on $\mathcal{D}_{2m}$ considered in this paper coincides with the one for $C^\infty$-smooth bounded strongly pseudoconvex domain studied in \cite{bl14}. The boundary asymptotics of the diagonal values of the kernel $S_{\mathcal{D}_{2m}}$ are derived in Proposition \ref{bdry-asym-sgo}, in analogy with \cite{a78}.

Furthermore, following Barrett--Lee, we consider the biholomorphic invariant $SK_{\mathcal{D}_{2m}}$ defined using the Bergman and Fefferman--Szeg\H o kernel. We study its limiting behaviour and establish its boundedness; in contrast to the strongly pseudoconvex case, this invariant is non-constant unless $m=1$; see Propositions \ref{SK-lim} and \ref{SK:bdd}.

Our next result provides a characterization of the unit ball $\mathbb{B}^2$ within the family of egg domains $\mathcal{D}_{2m}$ using the Einstein property and the comparability with the natural metrics such as the Bergman metric $g_{\operatorname{B}}^{\mathcal D_{2m}}$ and complete K\"ahler metric $g_m^{\mathcal D_{2m}}$. 
\begin{theorem}\label{main:egg}
    For $m\in\mathbb Z^+$, let \(\mathcal D_{2m}=\{(z,w)\in\mathbb C^2~:~|z|^2+|w|^{2m}<1\}\subset\mathbb C^2\). Then, the following statements are equivalent:
    \begin{enumerate}
        \item[(i)] the Fefferman--Szeg\H o metric \(g_{\operatorname{FS}}^{\mathcal D_{2m}}\) is K\"ahler--Einstein;
        \item[(ii)] \(g_{\operatorname{FS}}^{\mathcal D_{2m}}=\lambda g_{\operatorname{B}}^{\mathcal D_{2m}}\) for some \(\lambda>0\);
        \item[(iii)] \(g_{\operatorname{FS}}^{\mathcal D_{2m}}=\lambda g_m^{\mathcal D_{2m}}\) for some \(\lambda>0\);
        \item[(iv)] $m=1$.
    \end{enumerate}
\end{theorem}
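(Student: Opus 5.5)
The plan is to prove (iv)$\Rightarrow$(i),(ii),(iii) directly and then (i)$\Rightarrow$(iv), (ii)$\Rightarrow$(iv), (iii)$\Rightarrow$(iv) by looking at boundary behaviour along a well-chosen slice. For $m=1$ the domain is $\mathbb B^2$. There $\sigma_{\operatorname F}$ is a constant multiple of $\sigma_{\operatorname E}$, and Theorem \ref{feff-sgo-egg-doms} gives $S_{\mathcal D_2}(\zeta,\zeta)=c(1-|z|^2-|w|^2)^{-2}$. Hence $g_{\operatorname{FS}}=2\,\partial\bar\partial\log(1-|\zeta|^2)^{-1}$. This is a constant multiple of the Bergman metric, whose Bergman kernel is $\propto(1-|\zeta|^2)^{-3}$. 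It is therefore Kähler--Einstein, and it is also a multiple of $g_1^{\mathcal D_2}$, which on the ball I expect to be the complex hyperbolic metric.

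For the converses, write $r=1-|z|^2$ and $t=|w|^2/r^{1/m}\in[0,1)$. From Theorem \ref{feff-sgo-egg-doms},
\[
\log S(\zeta,\zeta)=\log c-\tfrac{4m-1}{3m}\log r-\tfrac{2}{m}\log r+\log\big((m+2)+(1-m)t\big)-2\log(1-t),
\]
so the potential has the form $\Phi=-\frac{4m+5}{3m}\log r+F(t)$ with $F(t)=\log((m+2)+(1-m)t)-2\log(1-t)$. The Bergman kernel of $\mathcal D_{2m}$ is classical (D'Angelo's formula) and has the same structure, $\log K_B=-\alpha\log r+G(t)$, with an explicit $G$.

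I will compute $\det(\Phi_{j\bar k})$ in the variables $(z,w)$. By the $\mathbb T^2$ and $\mathrm{Aut}$-symmetry (the maps $(z,w)\mapsto(\phi_a(z),w\cdot(\text{factor})^{1/m})$ act transitively on the $z$-variable), it suffices to evaluate at points $(0,w)$. There $r=1$, $t=|w|^2$, and the Hessian entries reduce to ODE expressions in $F$, $F'$, $F''$ in $s=|w|^2$.
\begin{itemize}
\item For (ii), $g_{\operatorname{FS}}=\lambda g_B$ forces $\Phi-\lambda\log K_B$ to be pluriharmonic. Since both are functions of $(r,t)$ and are invariant, this means $F-\lambda G$ is constant and the $\log r$ coefficients match. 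Comparing the explicit rational functions, e.g.\ their Taylor coefficients at $t=0$ or their pole/zero structure (the zero of $(m+2)+(1-m)t$ at $t=(m+2)/(m-1)$ versus the structure of $G$), should force $m=1$.
\item For (iii), I would do the same comparison with the defining potential of $g_m^{\mathcal D_{2m}}$, presumably $-\log(1-|z|^2-|w|^{2m})$ or an analogous expression appearing earlier.
\item For (i), the Einstein condition $-\log\det(\Phi_{j\bar k})=\mu\Phi+\text{pluriharmonic}$, again restricted to $(0,w)$, becomes an identity between explicit functions of $s$. I will expand both sides at $s=0$ to low order, or examine the behaviour as $s\to1$ along the Levi-degenerate direction. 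Matching the $\log r$ coefficient fixes $\mu$, and the first nontrivial Taylor coefficient in $s$ should then give a polynomial equation in $m$ whose only positive integer root is $m=1$.
\end{itemize}

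The main obstacle is step (i): computing $\det(\Phi_{j\bar k})$ in closed form and then extracting a clean obstruction. The variable $t$ mixes $z$ and $w$, and the factor $(m+2)+(1-m)t$ makes the algebra heavy. To keep it manageable, I will work only at $z=0$ and use the relation $\partial_z\partial_{\bar z}t|_{z=0}=|w|^2/m$ for the needed derivatives. I will then compare only the first one or two coefficients of the expansion in $s$; I expect a coefficient proportional to $(m-1)$ times a positive quantity.
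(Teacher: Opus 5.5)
Your overall strategy works, and the direction (iv)$\Rightarrow$(i)--(iii) is fine. However, two inputs are wrong as written, and they would sink the converses.

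\textbf{The potential.} The numerator $(m+2)r^{1/m}+(1-m)|w|^2$ equals $r^{1/m}\bigl((m+2)+(1-m)t\bigr)$. Hence $\log S=\mathrm{const}-\frac{4m+2}{3m}\log r+F(t)$ (with your $F$), not $-\frac{4m+5}{3m}\log r+F(t)$. The slip has three consequences:
\begin{enumerate}
\item[(a)] for $m=1$ your exponent gives $S\propto r^{-1}(1-|z|^2-|w|^2)^{-2}$, which contradicts your own first paragraph;
\item[(b)] your $g_{1\bar1}(0,w)$ is off by $1/m$;
\item[(c)] each of your ``match the $\log r$ coefficient'' steps produces a spurious obstruction already at $m=1$.
\end{enumerate}

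\textbf{The metric $g_m^{\mathcal D_{2m}}$.} It is defined as $-\partial\bar\partial\log\rho_m$ with $\rho_m=(1-|z|^2)^{1/m}-|w|^2$. Your guess $-\log(1-|z|^2-|w|^{2m})$ is not even strictly plurisubharmonic for $m>1$. Since $\log\rho_m=\frac{1}{m}\log r+\log(1-t)$, your comparison gives $\lambda=2$ from the $\log(1-t)$ terms and then $\frac{2}{m}=\frac{4m+2}{3m}$, i.e.\ $m=1$. This is exactly the paper's proof of Proposition~\ref{FS-gm}.

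\textbf{Part (ii).} With $\log K=\mathrm{const}-\frac{2m+1}{m}\log r+\log\bigl((m+1)-(m-1)t\bigr)-3\log(1-t)$, the $\log r$ and $\log(1-t)$ coefficients both give $\lambda=2/3$. So the entire obstruction is that $\bigl((m+2)+(1-m)t\bigr)^3/\bigl((m+1)-(m-1)t\bigr)^2$ is non-constant for $m>1$, which is the paper's function $f$ in Proposition~\ref{FS-gB}. In both (ii) and (iii) you should state the lemma you rely on: a torus-invariant pluriharmonic function on a complete Reinhardt domain is constant.

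\textbf{Part (i).} Here your route differs from the paper's and is lighter. The paper computes the full Ricci tensor on $\{z=0\}$ (the functions $A,\dots,F$), gets $\lambda=-3/2$ from $\operatorname{Ric}_{2\bar2}/g_{2\bar2}$ as $|w|\to1$, and then evaluates at $w=0$. You only need
$H(s):=\det g_{\operatorname{FS}}(0,w)=\bigl(\frac{4m+2}{3m}+\frac{s}{m}F'(s)\bigr)\bigl(sF'(s)\bigr)'$ with $s=|w|^2$. Two points must be made explicit.
\begin{enumerate}
\item[(1)] The remainder $h$ in $-\log\det g=\mu\log S+h$ is torus-invariant. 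So $h(0,\cdot)$ is a radial harmonic function on $\mathbb D$, hence constant, and $-\log H=\mu F+C$ on $[0,1)$.
\item[(2)] On the slice $r\equiv1$, so a $\log r$ comparison requires first transporting by the automorphisms \eqref{auto_eggdoms}. Their Jacobians give $\det g(z,w)=(1-|z|^2)^{-(2m+1)/m}H(t)$. Comparing with the coefficient $-\frac{4m+2}{3m}$ in $\log S$ yields $\mu=-3/2$ for every $m$. Equivalently, use $H(s)\sim\frac{4}{m}(1-s)^{-3}$ as $s\to1$.
\end{enumerate}
Your first-order coefficient at $s=0$ is $H'(0)/H(0)=\frac{3}{2}F'(0)$. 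It reduces to $(m-1)^2(m+14)=0$, the same polynomial as the paper's condition $r^2(5-4r)=0$ (with the paper's $r=\frac{m-1}{m+2}$), since both express $\operatorname{Ric}_{2\bar2}(0,0)=-\frac{3}{2}g_{2\bar2}(0,0)$.

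A quicker finish is also available. Once $\mu=-3/2$, you get $H(s)(1-s)^3=C\bigl((m+2)+(1-m)s\bigr)^{3/2}$. This is impossible for $m>1$: the left side is rational in $s$, while the right side has a branch point at $s=\frac{m+2}{m-1}$.

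What the paper's heavier computation buys is the full Ricci tensor, which it reuses for the pinching statement in Proposition~\ref{coro1}.
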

The proof of part (i) of Theorem \ref{main:egg} hinges on the explicit expressions for the Fefferman–Szeg\H{o} metric and its associated Ricci curvature on $\mathcal D_{2m}$, which we state in the following theorem.
\begin{theorem}\label{FSmetric-Ricci}
    For any $m\in\mathbb Z^+$, let $\mathcal{D}_{2m}=\{(z,w)\in\mathbb{C}^2:|z|^2+|w|^{2m}<1\}$. For $w\in\mathbb D$, set
    \[
    r:=\frac{m-1}{m+2},\qquad t:=\frac{1-|w|^2}{1-r|w|^2}.
    \]
    Then, the Fefferman--Szeg\H o metric $g_{\operatorname{FS}}^{\mathcal D_{2m}}$ at \((0,w)\) is given by
    \[
    g_{\operatorname{FS}}^{\mathcal D_{2m}}(0,w)
    =
    \begin{pmatrix}
    \dfrac{\alpha(t)}{t(1+2r)} & 0\\[8pt]
    0 & \beta(t)\,\dfrac{(1-rt)^2}{t^2(1-r)^2}
    \end{pmatrix},
    \]
    where
    \[
    \alpha(t):=2+rt+rt^2,\qquad \beta(t):=2-rt^2.
    \]
    Moreover, the Ricci curvature at \((0,w)\) is diagonal and has the form
    \[
    \operatorname{Ric}_{\operatorname{FS}}^{\mathcal D_{2m}}(0,w)
    =
    \begin{pmatrix}
    -\dfrac{2A(t)+4\alpha(t)}{t\alpha(t)(1+2r)}
    +\dfrac{B(t)}{(2r+1)t(2-rt^2)}
    +\dfrac{C(t)}{t\alpha(t)(1+2r)}
    & 0\\[12pt]
    0 & D(t)+E(t)+F(t)
    \end{pmatrix},
    \]
    with the auxiliary functions
    \[
    \begin{aligned}
    A(t)&=-r^{2}t^{4}+(3r^{2}+2r)t^{3}+2r^{2}t^{2}-2,\\
    B(t)&=-2r^{2}t^{4}+(r+r^{2})t^{3}+2(1+r)t-4,\\
    C(t)&=(1-t)(1-rt)(2-rt^2),
    \end{aligned}
    \]
    and
    \[
    \begin{aligned}
    D(t)&=
    \frac{-2t^2(1-r)^2(2-r^2t^2)-8t(1-r)(1-t)(2-r^3t^3)-6(1-t)^2(2-r^4t^4)}
    {t^2(1-r)^2(2-rt^2)},\\[8pt]
    E(t)&=
    \frac{4(1-t)(1-rt)(2-r^2t^3)^2}
    {t^2(1-r)^2(2-rt^2)^2},\\[8pt]
    F(t)&=
    -\frac{(1-rt)^2(2-rt^2)}{(1-r)t\alpha(t)}
    -\frac{2(1-t)(1-rt)^2(2-r^2t^3)}
    {(1-r)^2t^2\alpha(t)}
    +\frac{(1-t)(1-rt)^3(2-rt^2)^2}
    {(1-r)^2t^2\alpha(t)^2}.
    \end{aligned}
    \]
\end{theorem}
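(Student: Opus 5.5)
The computation starts from the closed form of Theorem \ref{feff-sgo-egg-doms} and uses the definition of the Fefferman--Szeg\H o metric as $g_{\operatorname{FS}}=\partial\bar\partial\log S_{\mathcal D_{2m}}(\zeta,\zeta)$. On the diagonal write $X:=1-|z|^2$ and $Y:=|w|^2$. Then
\[
\log S(\zeta,\zeta)=\log c-\tfrac{4m-1}{3m}\log X+\log\bigl((m+2)X^{1/m}+(1-m)Y\bigr)-2\log\bigl(X^{1/m}-Y\bigr).
\]
The domain is Reinhardt, so the metric at points $(0,w)$ is diagonal: mixed derivatives $\partial_z\partial_{\bar w}$ of a function of $|z|^2,|w|^2$ carry a factor $\bar z$, which vanishes at $z=0$. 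This reduces the metric to the two second derivatives $\partial_z\partial_{\bar z}$ and $\partial_w\partial_{\bar w}$ evaluated at $z=0$.

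\textbf{Computing the metric.} At $z=0$ we have $X=1$, $\partial_z\partial_{\bar z}X=-1$, and $\partial_z X=0$. Hence for any function $f(X,Y)$ we get $\partial_z\partial_{\bar z}f|_{z=0}=-f_X(1,Y)$. For the $w$-direction, $\partial_w\partial_{\bar w}f=f_Y+Yf_{YY}$.

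I will write everything in terms of $r=(m-1)/(m+2)$. The middle factor becomes $(m+2)(X^{1/m}-rY)$, and at $X=1$ one has $1-rY=(1-r)/(1-rt)$ and $1-Y=t(1-r)/(1-rt)$. These substitutions should produce the stated rational expressions in $t$. I expect $\alpha$ to appear in the $zz$ entry and $\beta$ in the $ww$ entry after clearing denominators, and I will check the answer against $m=1$ ($r=0$, $t=1-|w|^2$), where the ball metric $3\,\partial\bar\partial(-\log(1-|\zeta|^2))$ must be recovered.

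\textbf{Ricci curvature.} Here $\operatorname{Ric}=-\partial\bar\partial\log\det g_{\operatorname{FS}}$, so I need the determinant at general points, not only at $(0,w)$. The second $z$-derivative hits the $|z|^2$-dependence of $\det g$, so I must compute $g_{\operatorname{FS}}$ to first order in $|z|^2$.

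The plan is to keep the full Reinhardt form. Write $\log S=\Phi(X,Y)$ and set $u=|z|^2$, so $X=1-u$. The metric entries are
\[
g_{z\bar z}=\Phi_u+u\Phi_{uu},\qquad g_{z\bar w}=\bar z w\,\Phi_{uY},\qquad g_{w\bar w}=\Phi_Y+Y\Phi_{YY}.
\]
Then
\[
\det g=(\Phi_u+u\Phi_{uu})(\Phi_Y+Y\Phi_{YY})-uY\Phi_{uY}^2.
\]
This is a function $G(u,Y)$, and $\operatorname{Ric}$ at $u=0$ has $zz$ entry $-G_u/G$ and $ww$ entry $-\partial_Y(YG_Y/G)$.

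The $zz$ entry requires third derivatives of $\Phi$ (in $u$ twice and in $Y$, namely $\Phi_{uu}$, $\Phi_{uY}$, $\Phi_{uYY}$). Grouping these terms should yield the three pieces $A/\alpha$, $B/\beta$ and $C/\alpha$. The $ww$ entry splits as $-\partial_Y(YG_Y/G)$ over the two factors $\Phi_u$ and $\Phi_Y+Y\Phi_{YY}$, which gives $D+E$ from the $ww$ factor and $F$ from the $\alpha$ factor.

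The main obstacle is bookkeeping in these third- and fourth-order derivatives. I will handle it by writing $\Phi$ in the variables $s=X^{1/m}$ and $Y$, expressing derivatives in $X$ through $s$ via $\partial_X=\tfrac1m s^{1-m}\partial_s$, and converting to $t$ only at the end. Throughout I will use the $m=1$ check, where the Ricci curvature must equal $-\tfrac13 g$ times the appropriate factor, together with a symbolic sanity check of the final simplification into the forms $A,\dots,F$.
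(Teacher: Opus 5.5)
Your proposal is correct. The metric entries are computed exactly as in the paper, which writes $\log S=\Psi(a,v)$ with $a=|z|^2$, $v=|w|^2$ and reads off $g_{1\bar1}=\Psi_a$, $g_{2\bar2}=\Psi_v+v\Psi_{vv}$ and $g_{1\bar2}=0$ at $a=0$. For the Ricci curvature you organize the computation differently.

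The paper contracts the curvature tensor, $\operatorname{Ric}_{i\bar j}=\sum g^{l\bar k}R_{i\bar jk\bar l}$. It computes $R_{1\bar11\bar1}=-2\Psi_{aa}$, $R_{1\bar12\bar2}=-\Psi_{av}-v\Psi_{avv}+v\Psi_{av}^2/\Psi_a$ and $R_{2\bar22\bar2}$. It then checks by hand a list of vanishing third- and fourth-order derivatives of $g$ at $z=0$ to see that $\operatorname{Ric}$ is diagonal.

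Your route through $\log\det g=\log G(|z|^2,|w|^2)$ gives diagonality at $(0,w)$ for free, from the same $\bar z w$ factor. Your two formulas reproduce the paper's term by term. At $u=0$,
\[
-\frac{G_u}{G}=-\frac{2\Phi_{uu}}{\Phi_u}-\frac{\Phi_{uY}+Y\Phi_{uYY}}{\Phi_Y+Y\Phi_{YY}}+\frac{Y\Phi_{uY}^2}{\Phi_u\bigl(\Phi_Y+Y\Phi_{YY}\bigr)},
\]
and these three terms are the $A$-, $B$- and $C$-pieces. Likewise $G(0,Y)=g_{1\bar1}(0,w)\,g_{2\bar2}(0,w)$ gives $D+E=-\partial_w\partial_{\bar w}\log g_{2\bar2}(0,w)$ and $F=-\partial_w\partial_{\bar w}\log g_{1\bar1}(0,w)$. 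These are the paper's $g^{2\bar2}R_{2\bar22\bar2}$ and $g^{1\bar1}R_{2\bar21\bar1}$.

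The one delicate point in your route is keeping the off-diagonal term $-uY\Phi_{uY}^2$ in $\det g$, and you did. It vanishes at $u=0$, but its $u$-derivative is exactly what produces the $C$-term. Your route is shorter and less error-prone; the paper's yields the individual curvature components, which carry more information than their trace.

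One correction to your sanity check. For $m=1$ the kernel is $\frac{1}{2\pi^2}(1-|\zeta|^2)^{-2}$, so $g_{\operatorname{FS}}=2\,\partial\bar\partial(-\log(1-|\zeta|^2))$. The factor $3$ you wrote is the Bergman normalization. At $r=0$ the stated formulas give $g_{1\bar1}=2/t$ and $g_{2\bar2}=2/t^2$ with $t=1-|w|^2$. The relation to test is $\operatorname{Ric}=-\tfrac32\,g_{\operatorname{FS}}$, i.e.\ $\operatorname{Ric}_{1\bar1}=-3/t$ and $\operatorname{Ric}_{2\bar2}=-3/t^2$, which the stated $A,\dots,F$ do satisfy. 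As you wrote it, the check would report a spurious mismatch.

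For the bookkeeping, for $k\ge 1$ the needed derivatives at $u=0$ collapse to single fractions:
\[
\Phi_{Y^k}=(k-1)!\,\frac{(1-rt)^k(2-r^kt^k)}{t^k(1-r)^k},\qquad
\Phi_{uY^k}=k!\,\frac{(1-rt)^{k+1}(2-r^kt^{k+1})}{(1+2r)(1-r)^kt^{k+1}}.
\]
With these, the passage through $s=X^{1/m}$ is optional.
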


\medskip

Finally, we study the $L^2$-cohomology of the Fefferman--Szegő metric.
The $L^2$-cohomology for the Bergman metric on $C^\infty$-smooth bounded strongly pseudoconvex domains in $\mathbb{C}^n$ was studied by Donnelly–Fefferman \cite{df83} and Donnelly \cite{d94}, and in a more general setting by McNeal \cite{m02} and Ohsawa \cite{o89}, among others.
Here, we establish the following vanishing theorem for the $L^2$-cohomology of the Fefferman--Szeg\H o metric on $\mathcal{D}_{2m}$.
\begin{theorem}\label{van-L2-coho}
For any $m \in\mathbb Z^+$, let $\mathcal{D}_{2m}=\left\{(z,w)\in \mathbb{C}^2: |z|^2 + |w|^{2m} < 1\right\}$. Let $\mathcal{H}_2^k(\mathcal{D}_{2m})$ denote the space of square-integrable harmonic $k$-forms on $\mathcal{D}_{2m}$ with respect to $g_{\operatorname{FS}}^{\mathcal{D}_{2m}}$. Then for all $k \neq 2$, we have
\[\mathcal{H}_2^k(\mathcal{D}_{2m}) = 0.
\]

\end{theorem}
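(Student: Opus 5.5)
Since $\mathcal D_{2m}$ has real dimension four, we use Donnelly--Fefferman and Gromov type vanishing: a complete Kähler manifold whose Kähler form is $d$ of a form bounded in the metric (a Kähler potential with bounded gradient) has no $L^2$ harmonic $k$-forms for $k\neq n=2$. The plan is to show that
\[
\psi:=\log S_{\mathcal D_{2m}}(\zeta,\zeta)
\]
satisfies $g_{\operatorname{FS}}^{\mathcal D_{2m}}=\partial\bar\partial\psi$, that the metric is complete, and that $|\partial\psi|_{g_{\operatorname{FS}}}$ is bounded on $\mathcal D_{2m}$. Then $\omega=d\eta$ with $\eta=\tfrac{i}{2}(\bar\partial-\partial)\psi$, which is bounded, and the Donnelly--Fefferman estimate (the Lefschetz operator $L=\omega\wedge\cdot$ is an isomorphism $\Lambda^k\to\Lambda^{4-k}$ for $k<2$, and $[L,\Delta]=0$, giving $\|\alpha\|^2\le C\|\eta\|_\infty\|\alpha\|\,\|d\alpha\|$-type inequalities on compactly supported approximants) yields $\mathcal H_2^k=0$ for $k\ne2$. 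The case $k>2$ follows from $k<2$ by Hodge star duality.

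The concrete steps are as follows.

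\textbf{Step 1: reduction by symmetry.} The automorphisms
\[
(z,w)\mapsto\Big(\tfrac{z-a}{1-\bar a z},\ \tfrac{(1-|a|^2)^{1/2m}}{(1-\bar a z)^{1/m}}\,w\Big),
\]
together with the rotations, act on $\mathcal D_{2m}$ and move every point to one of the form $(0,w)$. The Fefferman--Szeg\H o metric is invariant under these maps. Likewise, $|\partial\psi|_g$ is invariant up to the transformation of the kernel. That transformation factor is a pluriharmonic $\log|J|^{c}$, so $\partial\psi$ changes by the holomorphic term $c\,\partial\log J$. This must be handled: boundedness of $|\partial\log J|_g$ is not automatic. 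It is therefore cleaner to compute the invariant length $|\partial\psi|_g^2$ directly at $(0,w)$ and use a transitivity-covariant version. Alternatively, we can choose the potential per orbit, since Donnelly's criterion only needs $\omega=d\eta$ with some bounded $\eta$.

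\textbf{Step 2: direct global computation.} A simpler route is to compute $|\partial\psi|^2_{g}=g^{j\bar k}\psi_j\psi_{\bar k}$ globally from the explicit formula of Theorem \ref{feff-sgo-egg-doms}. With $\rho=1-|z|^2$ and $s=|w|^2/\rho^{1/m}\in[0,1)$, the diagonal kernel takes the form
\[
S=c\,\rho^{-(4m-1)/3m-1/m}\,\frac{(m+2)-(m-1)s}{3m(1-s)^2}.
\]
Hence $\psi=-a\log\rho+\phi(s)$, where $\phi(s)=\log\frac{(m+2)-(m-1)s}{(1-s)^2}$ and $a=\tfrac{4m+2}{3m}$. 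The metric computed in Theorem \ref{FSmetric-Ricci} has entries comparable to $1/t$ and $1/t^2$, where $t\sim1-|w|^2$ near the boundary at $z=0$. Meanwhile $\psi_w\sim 2\bar w/(1-|w|^2)$ and $\psi_z=0$ at $(0,w)$. So at $(0,w)$ we get
\[
|\partial\psi|^2=\frac{|\psi_w|^2}{g_{w\bar w}}\sim\frac{4|w|^2/t^2}{\beta(1-rt)^2/(t^2(1-r)^2)},
\]
which is bounded. For general points we use that the biholomorphism above preserves $s$ and sends $\rho$ to $\rho$ times $|1-\bar az|^{-2}$. Thus $\psi\circ\Phi=\psi+\text{pluriharmonic}$, and the pluriharmonic correction can be absorbed by modifying $\eta$ by $d^c$ of a pluriharmonic function, which is closed. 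So it suffices to bound $|d\tilde\psi|_g$ for the normalized potential $\tilde\psi$ at the base point $(0,w)$, and this is exactly the computation above, valid uniformly in $w$.

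\textbf{Step 3: completeness, the main obstacle.} Completeness of $g_{\operatorname{FS}}$ requires lower bounds near both the strongly pseudoconvex boundary and the Levi-degenerate circle $\{(e^{i\theta},0)\}$. I would compare with $g_{\operatorname{FS}}\ge c\,\partial\bar\partial(-\log(1-|z|^2))$ together with the $w$-direction bound $\gtrsim t^{-2}|dw|^2$ from Theorem \ref{FSmetric-Ricci}, transported by the automorphisms. Alternatively, a bounded-gradient potential $\psi\to\infty$ at the boundary already gives completeness. Indeed $\psi\to+\infty$ at $\partial\mathcal D_{2m}$ (Proposition \ref{deno-feff-sgo-ker}), and $|d\psi|_g$ is bounded, so any curve reaching the boundary has infinite length. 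Verifying the uniform bound in Step 2 is the crux; once it holds, completeness comes for free.
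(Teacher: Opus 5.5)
Your overall route is the paper's: Donnelly's vanishing theorem with a primitive of $\omega$ built from $\psi=\log S_{\mathcal D_{2m}}$ (your real form $\eta=\frac{i}{2}(\bar\partial-\partial)\psi$ is the real part of the paper's $\Theta=-i\partial\psi$), with the sup-norm bound reduced to the slice $\{z=0\}$ by the automorphisms \eqref{auto_eggdoms}. Your slice computation is correct. The gap is the point you flagged in Step 1 and then dismissed in Step 2.

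Donnelly's criterion needs one fixed global $\eta$ with $\sup|\eta|_g<\infty$. For $p=(z_0,w_0)$, let $T$ be the automorphism with $T(0,w)=p$. Isometry gives $|\eta|_g(p)=|T^*\eta|_g(0,w)$, and
\[
T^*\eta=\tfrac i2(\bar\partial-\partial)(\psi\circ T)=\eta+\tfrac i2(\bar\partial-\partial)h_T,\qquad h_T:=\psi\circ T-\psi=a\log|1+\bar z_0z|^2+\mathrm{const},
\]
where $a=\frac{4m+2}{3m}$ as in your Step 2. The function $h_T$ depends on $p$.

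This term cannot be "absorbed by modifying $\eta$." A different pluriharmonic correction for each point, or for each orbit (i.e.\ each level set of $s$), does not assemble into a single smooth primitive of $\omega$. So the closedness of $d^c h_T$ is beside the point.

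The term is also not negligible. At $(z_0,0)$ one has $\partial\psi=\frac{a\bar z_0}{1-|z_0|^2}\,dz$, hence $|\partial\psi|_g^2=a^2|z_0|^2/g_{1\bar1}(0,0)$. The base-point value at $(0,0)$ is $0$, so there the entire quantity comes from the correction. As written, Step 2 only proves the bound on $\{z=0\}$.

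What is missing is a uniform estimate of the correction at the base point; this is the paper's Lemma \ref{estimate}. One has $\partial h_T(0,w)=a\bar z_0\,dz$ and $g_{1\bar1}(0,w)=\alpha(t)/(t(1+2r))\ge 2/(1+2r)$. Hence $|\partial h_T|_g\le a\sqrt{(1+2r)/2}$ uniformly in $z_0$ and $w$. Combined with your slice bound $|\bar w\Psi_v|^2/g_{2\bar2}=v(2-rt)^2/(2-rt^2)\le 4$, this gives $\sup|\eta|_g<\infty$.

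With that added, your completeness argument in Step 3 is valid and is a self-contained substitute for the paper's appeal to domination of the Carath\'eodory metric. The reason $\psi\to+\infty$ at $\partial\mathcal D_{2m}$ comes from the diagonal formula rather than Proposition \ref{deno-feff-sgo-ker}. In $\psi=-a\log\rho+\phi(s)$ one has $-a\log\rho\ge 0$ and $\phi\ge\log 3$. Since $1-|z|^2-|w|^{2m}=\rho(1-s^m)$, approaching the boundary forces $\rho\to0$ or $s\to1$. A bounded-gradient exhaustion then gives every divergent curve infinite length. Without the correction estimate, Step 3 inherits the same gap.
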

We expect that the present techniques generalize to higher-dimensional egg domains, but we restrict our treatment to $\mathbb{C}^2$
  in order to keep the exposition as transparent as possible.
\section{Preliminaries}
\subsection*{Notation}
We use the following notations throughout the paper.

\begin{itemize}
\item Let $\mathbb{Z}^+$ denote the positive integers. The unit ball in $\mathbb{C}^n$ is 
$\mathbb{B}^n:=\{z\in\mathbb{C}^n: |z_1|^2+\cdots+|z_n|^2<1\}$; we write $\mathbb{B}^1=\mathbb{D}$.
\item For $i,j=1,\dots,n$, we use the shorthand notation $\partial_i:=\partial/\partial z_i$, $\partial_{\bar j}:=\partial/\partial \bar z_j$, $\partial_{i\bar j}:=\partial_i\partial_{\bar j}
=\partial^2/\partial z_i\,\partial\bar z_j$ and so on.

\item For any two nonnegative quantities $A$ and $B$, write $A\lesssim B$ if $A\le C B$ for some constant $C>0$ independent of the relevant parameters; write $A\approx B$ if both $A\lesssim B$ and $B\lesssim A$ hold.

\item For a bounded domain $\Omega\subset\mathbb C^n$ with $C^2$-smooth boundary $\partial\Omega$, let $\operatorname{dist}(z,\partial\Omega)$ denote the Euclidean distance from $z\in\Omega$ to the boundary $\partial\Omega$. Denote by $\mathcal{O}(\Omega)$ the space of holomorphic functions on $\Omega$, and by $J_{\mathbb{C}}F$ the complex Jacobian matrix of a biholomorphism $F$.

\item Let $\sigma_{\mathrm{E}}$ be the Euclidean surface measure on $\partial\Omega$. For $p>1$ and a nonnegative locally integrable weight $\varpi$ on $\partial\Omega$, define the weighted Lebesgue space $L_\varpi^p(\partial\Omega)$ by
\[
\|f\|_{L_\varpi^p}^p := \int_{\partial\Omega} |f|^p \varpi\, d\sigma_{\mathrm{E}} < \infty.
\]
When $\varpi\equiv 1$, we write $L^p(\partial\Omega)$.
\end{itemize}

\subsection*{Spaces of Homogeneous Type} We recall the spaces of homogeneous type introduced by Coifman and Weiss \cite{cw71,cw77}; this framework is required to define the Fefferman--Hardy space. Let $(X,\mu)$ be a measure space. A function $d:X\times X\to[0,\infty)$ is called a \emph{quasimetric} if for some $K>1$ and all $x,y,z\in X$,
\begin{enumerate}
\item[(i)] $d(x,y)=0$ if and only if $x=y$;
\item[(ii)] $d(x,y)=d(y,x)$;
\item[(iii)] $d(x,y)\le K\bigl(d(x,z)+d(z,y)\bigr)$.
\end{enumerate}
For $x\in X$ and $r>0$, write $\mathrm{B}_d(x,r):=\{y\in X:d(x,y)<r\}$; assume each such ball is $\mu$-measurable.

The triple $(X,d,\mu)$ is a \emph{space of homogeneous type} if, in addition, there exist constants $K_1,K_2>0$ such that, for all $x,x_1,x_2\in X$ and $r>0$,
\[
\operatorname{B}_d(x_1,r)\cap \operatorname{B}_d(x_2,r)\neq \emptyset
\quad\Longrightarrow\quad
\operatorname{B}_d(x_2,r)\subset \operatorname{B}_d(x_1,K_1r),
\]
and
\[
\mu\bigl(\operatorname{B}_d(x,K_1r)\bigr)
\le
K_2\,\mu\bigl(\operatorname{B}_d(x,r)\bigr).
\]

\begin{definition}
Let $(X,d,\mu)$ be a space of homogeneous type and  $p>1$. A nonnegative locally integrable function $\varpi$ on $X$ belongs to the Muckenhoupt class $A_p(X,d,\mu)$ if
\[
[\varpi]_{A_p(X,d,\mu)}
:=
\sup_{\operatorname{B}}
\left\langle \varpi \right\rangle_{\operatorname{B}}
\left\langle \varpi^{-\frac{1}{p-1}} \right\rangle_{\operatorname{B}}^{\,p-1}
<\infty,
\]
where the supremum is taken over all quasimetric balls $\operatorname{B}\subset X$ with $0<\mu(\operatorname{B})<\infty$, and
\[
\left\langle \phi \right\rangle_{\operatorname{B}}
:=
\frac{1}{\mu(\operatorname{B})}
\int_{\operatorname{B}}\phi\,d\mu.
\]
When $X=\partial\Omega$ with the quasimetric and measure fixed, we write $\varpi\in A_p(\partial\Omega)$.
\end{definition}

\subsection*{Weighted Hardy Spaces} The definition of weighted Hardy spaces is based on non--tangential maximal functions, following the classical approach of Stein \cite{s72}. In the setting of bounded strongly pseudoconvex domains with $C^2$-smooth boundary, Duong--Lanzani--Li--Wick developed a weighted theory for such spaces with respect to Muckenhoupt measures \cite{dlw25, dllw26}. Yuan subsequently used this framework to formulate weighted--Szeg\H{o} kernels on bounded $C^2$-smooth domains \cite{y25}.

Let $\Omega\subset\mathbb C^n$ be a bounded domain with $C^2$-smooth boundary. If $\varpi$ is a nonnegative locally integrable function on $\partial\Omega$, write
\[
d\sigma_\varpi:=\varpi\,d\sigma_{\operatorname E}.
\]
For $\xi\in\partial\Omega$, let $\nu_\xi$ be the outward unit normal vector at $\xi$. For $z\in\Omega$, let $\delta_\xi(z)$ denote the minimum of  $\mathrm{dist}(z,\pa\Om)$ and the Euclidean distance from $z$ to the tangent plane to $\partial\Omega$ at $\xi$. For $\alpha>0$, define
\[
\Gamma_\alpha(\xi)
:=
\left\{z\in\Omega:
|(\xi-z)\cdot\overline{\nu_\xi}|<(1+\alpha)\;\delta_\xi(z),
\quad
|\xi-z|^2<\alpha\,\delta_\xi(z)
\right\}.
\]
For $f\in\mathcal O(\Omega)$, its non-tangential maximal function is
\[
\mathcal N(f)(\xi)
:=
\sup_{z\in\Gamma_\alpha(\xi)}|f(z)|,
\qquad \xi\in\partial\Omega.
\]
If the limit of $f(z)$ exists as $z\to\xi$ within $\Gamma_\alpha(\xi)$ for all $\alpha>0$, we denote it by $f^*(\xi)$ and call it the non--tangential boundary value of $f$.

\begin{definition}{(Duong--Lanzani--Li--Wick).} The weighted Hardy space is defined by
\[
\operatorname{H}^2_{\varpi}(\partial\Omega) = \left\{ f \in \mathcal{O}(\Omega) : \mathcal{N}(f) \in L_{\varpi}^2(\partial\Omega) \right\}.
\]
\end{definition}
Next, we recall the following result from \cite[Proposition 2.4]{y25}, which is a reformulation of \cite[Proposition 1.4]{dlw25}.

\begin{proposition}{(Duong--Lanzani--Li--Wick).} \label{DLLW}
If $\varpi \in A_2(\partial\Omega)$, then there exists $p>1$ such that
\[
\|\mathcal{N}(f)\|_{L^p(\partial\Omega)} \lesssim \|\mathcal{N}(f)\|_{L_{\varpi}^2(\partial\Omega)}.
\]
Moreover, every $f \in \operatorname{H}^2_{\varpi}(\partial\Omega)$ has a non--tangential limit $f^*$ for almost every $\xi \in \partial\Omega$ with respect to $d\sigma_\varpi$, satisfying
\[
\|f^*\|_{L_{\varpi}^2(\partial\Omega)} \approx \|\mathcal{N}(f)\|_{L_{\varpi}^2(\partial\Omega)}.
\]
As a consequence, $\operatorname{H}_{\varpi}^2(\partial\Omega)$ is a Hilbert space.
\end{proposition}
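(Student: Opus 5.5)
The plan is to deduce all three assertions from the classical unweighted theory together with two standard properties of $A_2$ weights: self-improvement (reverse H\"older) and the weighted boundedness of the maximal operator. Throughout, $\partial\Omega$ carries the quasimetric fixed in the definition of $A_2(\partial\Omega)$. I take this to be the nonisotropic quasimetric adapted to the approach regions $\Gamma_\alpha$, with $\sigma_{\operatorname E}$ doubling for it.

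\emph{The $L^p$ estimate.} Since $\varpi\in A_2$, also $\varpi^{-1}\in A_2$ with the same constant. By the reverse H\"older inequality on spaces of homogeneous type (Coifman--Fefferman, Coifman--Weiss), there is $\varepsilon>0$ with $\varpi^{-1}\in L^{1+\varepsilon}$ on every quasimetric ball. As $\partial\Omega$ is compact, finitely many balls cover it, so $\varpi^{-1}\in L^{1+\varepsilon}(\partial\Omega)$. Set $q=2(1+\varepsilon)/(2+\varepsilon)\in(1,2)$, so that $q/(2-q)=1+\varepsilon$. Write $\mathcal N(f)^q=\mathcal N(f)^q\varpi^{q/2}\varpi^{-q/2}$ and apply H\"older with exponents $2/q$ and $2/(2-q)$:
\[
\int_{\partial\Omega}\mathcal N(f)^q\,d\sigma_{\operatorname E}\le\Big(\int_{\partial\Omega}\mathcal N(f)^2\varpi\,d\sigma_{\operatorname E}\Big)^{q/2}\Big(\int_{\partial\Omega}\varpi^{-(1+\varepsilon)}\,d\sigma_{\operatorname E}\Big)^{(2-q)/2}.
\]
This gives the first claim with $p=q$.

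\emph{Existence of $f^*$.} By the first step, $f$ lies in the unweighted Hardy space defined through the admissible maximal function with exponent $p>1$. Stein's theorem on admissible convergence \cite{s72} then shows that $f$ has admissible, hence nontangential, limits $\sigma_{\operatorname E}$-almost everywhere. An $A_2$ weight is positive almost everywhere and locally integrable, so $\sigma_\varpi$ and $\sigma_{\operatorname E}$ are mutually absolutely continuous. The limit $f^*$ therefore exists $\sigma_\varpi$-almost everywhere.

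\emph{Norm equivalence.} The inequality $|f^*|\le\mathcal N(f)$ holds pointwise a.e., which gives $\|f^*\|_{L^2_\varpi}\lesssim\|\mathcal N(f)\|_{L^2_\varpi}$. For the reverse inequality, note that $f$ is harmonic and lies in the unweighted $H^p$ with $p>1$, so $f$ is the Poisson integral of $f^*\in L^p$. I would then show the pointwise bound $\mathcal N(f)\lesssim M(f^*)$, where $M$ is the Hardy--Littlewood maximal operator on $(\partial\Omega,d,\sigma_{\operatorname E})$. To do this, decompose the Poisson kernel for $z\in\Gamma_\alpha(\xi)$ dyadically in nonisotropic balls centred at $\xi$; alternatively, use a Cauchy--Leray/Henkin-type reproducing kernel whose size bounds are expressed in the quasimetric. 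Muckenhoupt's theorem on spaces of homogeneous type then gives $\|M(f^*)\|_{L^2_\varpi}\lesssim\|f^*\|_{L^2_\varpi}$.

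I expect this domination $\mathcal N(f)\lesssim M(f^*)$ to be the main obstacle. The regions $\Gamma_\alpha$ are admissible rather than merely nontangential, so the Euclidean Poisson kernel must be controlled on nonisotropic balls whose size is comparable to $\delta_\xi(z)$ in the complex normal direction and to $\delta_\xi(z)^{1/2}$ in the complex tangential directions. I would first try to get this from the Poisson kernel on those balls directly, and fall back on the kernel estimates of \cite{dlw25} if that fails.

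\emph{Completeness.} Let $(f_n)$ be Cauchy in $\operatorname H^2_\varpi$. The $L^p$ control from the first step, together with the estimate $|f(z)|\lesssim\|\mathcal N(f)\|_{L^p}$ on compact subsets, gives locally uniform convergence to a holomorphic limit $f$. Fatou's lemma applied to $\mathcal N(f-f_n)$ then shows $f\in\operatorname H^2_\varpi$ and $f_n\to f$. Via the equivalent norm $\|f^*\|_{L^2_\varpi}$, which comes from an inner product, $\operatorname H^2_\varpi(\partial\Omega)$ is a Hilbert space.
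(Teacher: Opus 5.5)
The paper gives no proof of this statement. It is imported from \cite{dlw25} via \cite{y25}, so your argument has to stand on its own. Most of it does. The reverse H\"older/H\"older step giving $\|\mathcal N(f)\|_{L^p}\lesssim\|\mathcal N(f)\|_{L^2_\varpi}$ is correct. So is the existence of $f^*$ from Stein's admissible convergence theorem together with the mutual absolute continuity of $\sigma_\varpi$ and $\sigma_{\operatorname E}$, the easy half $|f^*|\le\mathcal N(f)$, and the Fatou argument for completeness.

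The gap is exactly where you suspected, in $\mathcal N(f)\lesssim M(f^*)$, and your primary route cannot close it. At that point you use only that $f=P[f^*]$ is a Poisson integral, and for general boundary data the Euclidean Poisson integral is not controlled on admissible regions. Take $z\in\Gamma_\alpha(\xi)$ at distance $\approx\delta$ from $\partial\Omega$, whose nearest boundary point $\pi(z)$ is displaced from $\xi$ by $\approx\delta^{1/2}$ in a complex tangential direction. Let $g$ be the indicator of the Euclidean surface ball $B$ of radius $\delta$ about $\pi(z)$.
\begin{itemize}
\item Since $P(z,\cdot)\approx\delta^{1-2n}$ on $B$, we get $P[g](z)\gtrsim 1$.
\item A nonisotropic ball about $\xi$ meets $B$ only if its radius $\rho$ satisfies $\rho\gtrsim\delta$, and then its measure is $\approx\rho^n\gtrsim\delta^n$. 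Hence $Mg(\xi)\lesssim\delta^{2n-1}/\delta^{n}=\delta^{n-1}$.
\end{itemize}
The ratio blows up like $\delta^{1-n}$ for $n\ge 2$. So no dyadic decomposition of $P(z,\cdot)$ around $\xi$ can give the bound.

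The missing idea is to use holomorphy, not just harmonicity. Since $|f|$ is plurisubharmonic, $|f(z)|$ is at most its mean over the polydisc $Q(z)$ centred at $z$ with radius $c\delta$ in the complex normal direction and $c\delta^{1/2}$ in the complex tangential directions. For $c$ small, $Q(z)\subset\Omega$ by the $C^2$ bound on the curvature of $\partial\Omega$. Combine this with $|f|\le P[|f^*|]$ and average $P(\cdot,\xi')$ over $Q(z)$. This smears the Poisson kernel along the nonisotropic directions, and the averaged kernel is dominated, up to a constant, by $\sum_{k\ge0}2^{-k}|B(\xi,2^k\delta)|^{-1}\chi_{B(\xi,2^k\delta)}$. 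That yields $\mathcal N(f)(\xi)\lesssim M(f^*)(\xi)$ on any bounded $C^2$ domain, with $M$ taken over Stein's nonisotropic balls. This is Stein's lemma from \cite{s72}, the same tool behind the admissible convergence theorem you already invoke.

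Your fallback through a Cauchy--Leray/Henkin representation would also work, but only in the strongly pseudoconvex setting of \cite{dlw25}. You would still have to justify that the kernel reproduces $f$ from $f^*$ and satisfies $|C(z,\zeta)|\lesssim(\delta(z)+d(\pi(z),\zeta))^{-n}$.

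Finally, Muckenhoupt's theorem applies only if the balls produced by this domination are equivalent to those defining $A_2(\partial\Omega)$. Your standing assumption that the quasimetric is adapted to $\Gamma_\alpha$ is what guarantees this, so it is doing real work. It holds in the strongly pseudoconvex case. It fails for Hansson's quasimetric near the Levi-degenerate circle of $\mathcal D_{2m}$, whose balls reach $|w|\approx\epsilon^{1/(2m)}$ rather than $\epsilon^{1/2}$.
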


 Proposition \ref{DLLW} allows us to equip $\operatorname{H}^2_{\varpi}(\partial\Omega)$ with the norm
\[
\|f\|_{\operatorname{H}_{\varpi}^2(\partial\Omega)}: = \left( \int_{\partial\Omega} |f^*|^2(\xi) \, d\sigma_\varpi(\xi) \right)^{\frac{1}{2}},
\]
 and the same proposition implies that $\operatorname{H}_{\varpi}^2(\pa\Om)\subset \operatorname{H}^p(\pa\Om)$ for some $p>1$, where $\operatorname{H}^p(\pa\Om)$ denotes the $p$-th Hardy space (see \cite{s72, ls16}). Thus, for any $f\in \operatorname{H}_{\varpi}^2(\pa\Om)$, the Poisson integral estimate yields \[
|f(z)|^p
\le
\int_{\partial\Omega} P(z,\xi)|f^*(\xi)|^p\,d\sigma_{\mathrm{E}}(\xi)
\lesssim
\frac{\|f^*\|_{L^p(\partial\Omega)}^p}{\operatorname{dist}(z,\partial\Omega)^{2n}}
\approx
\frac{\|\mathcal{N}(f)\|_{L^p(\partial\Omega)}^p}{\operatorname{dist}(z,\partial\Omega)^{2n}}
\lesssim
\frac{\|\mathcal{N}(f)\|_{L_{\varpi}^2(\partial\Omega)}^p}{\operatorname{dist}(z,\partial\Omega)^{2n}},
\]
where the first inequality follows from \cite[Section 10]{s72} or \cite[Proposition 1]{ls16}. Consequently, the evaluation functional $\mathcal{E}_z:\operatorname{H}^2_{\varpi}(\partial\Omega)\to\mathbb C$ is bounded. Hence $\operatorname{H}^2_{\varpi}(\partial\Omega)$ is a reproducing kernel Hilbert space (for different arguments, see \cite[P. 5]{y25}).
We summarize the preceding discussion in the following result.
\begin{proposition}[\cite{dlw25,y25}] \label{H2reproducing}
If $\varpi\in A_2(\partial\Omega)$, then $\operatorname{H}^2_{\varpi}(\partial\Omega)$ is a reproducing kernel Hilbert space. Its reproducing kernel $S_{\varpi}(z,w)$ satisfies
\[
f(z)=\int_{\partial\Omega} S_{\varpi}(z,\xi)\,f(\xi)\,d\sigma_\varpi(\xi)
\qquad\text{for all } f\in \operatorname{H}^2_{\varpi}(\partial\Omega),\ z\in\Omega.
\]

If $\{\phi_j\}_{j\ge 1}$ is a complete orthonormal basis of $\operatorname{H}^2_{\varpi}(\partial\Omega)$, then

\begin{equation}\label{series_exp}
    S_{\varpi}(z,w)=\sum_{j\ge 1}\phi_j(z)\overline{\phi_j(w)},
\end{equation}
with uniform convergence on compact subsets of $\Omega\times\Omega$.
\end{proposition}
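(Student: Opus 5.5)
The statement just before this point is Proposition \ref{H2reproducing}. It is essentially a summary of the preceding discussion, so the proof assembles Proposition \ref{DLLW} with standard reproducing kernel Hilbert space theory. The plan has four steps: show that $\operatorname{H}^2_\varpi(\partial\Omega)$ is a Hilbert space, show that point evaluations are bounded, apply Riesz representation to get the kernel and the boundary reproducing formula, and derive the series expansion by expanding the kernel in an orthonormal basis.

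\textbf{Step 1 (Hilbert space).} By Proposition \ref{DLLW}, the map $f\mapsto f^*$ sends $\operatorname{H}^2_\varpi(\partial\Omega)$ injectively into $L^2_\varpi(\partial\Omega)$, with $\|f^*\|_{L^2_\varpi}\approx\|\mathcal N(f)\|_{L^2_\varpi}$. So the norm $\|f\|_{\operatorname{H}^2_\varpi}=\|f^*\|_{L^2_\varpi}$ comes from an inner product. Completeness is taken from Proposition \ref{DLLW}. If one wants to check it directly: a Cauchy sequence $f_j$ has $\mathcal N(f_j-f_k)\to 0$ in $L^2_\varpi$. The interior estimate of Step 2 then shows $f_j$ converges locally uniformly to some holomorphic $f$. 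Fatou's lemma gives $\mathcal N(f-f_j)\le\liminf_k\mathcal N(f_k-f_j)$, hence $f\in \operatorname{H}^2_\varpi$ and $f_j\to f$ in norm.

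\textbf{Step 2 (bounded evaluation).} Fix $z\in\Omega$. Proposition \ref{DLLW} embeds $\operatorname{H}^2_\varpi$ into $\operatorname{H}^p$ for some $p>1$. Since $|f|^p$ is plurisubharmonic, it is majorized by the Poisson integral of its boundary values. This gives
\[
|f(z)|\lesssim \operatorname{dist}(z,\partial\Omega)^{-2n/p}\,\|f\|_{\operatorname{H}^2_\varpi},
\]
and the constant is uniform on compact subsets $K\subset\Omega$. Hence $\mathcal E_z$ is a bounded functional, with norm locally bounded in $z$.

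\textbf{Step 3 (kernel and reproducing formula).} By the Riesz representation theorem there is a unique $k_z\in \operatorname{H}^2_\varpi$ with $f(z)=\langle f,k_z\rangle$. Set $S_\varpi(z,\xi):=\overline{k_z(\xi)}$, which is conjugate symmetric. Writing the inner product as a boundary integral against $d\sigma_\varpi$ of the nontangential boundary values gives
\[
f(z)=\int_{\partial\Omega} S_\varpi(z,\xi)f(\xi)\,d\sigma_\varpi(\xi),
\]
where the boundary values are understood as $f^*$ and $\overline{k_z^*}$.

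\textbf{Step 4 (series expansion).} Let $\{\phi_j\}$ be a complete orthonormal basis. Expanding $k_w$ gives $k_w=\sum_j\langle k_w,\phi_j\rangle\phi_j=\sum_j\overline{\phi_j(w)}\phi_j$, and evaluating at $z$ gives \eqref{series_exp} pointwise. For uniform convergence on compact sets, first note
\[
\sum_j|\phi_j(w)|^2=\|k_w\|^2=\|\mathcal E_w\|^2,
\]
which is bounded on compact sets by Step 2. By Cauchy--Schwarz, the tails satisfy
\[
\Big|\sum_{j>N}\phi_j(z)\overline{\phi_j(w)}\Big|^2\le \sum_{j>N}|\phi_j(z)|^2\sum_{j>N}|\phi_j(w)|^2 .
\]
It therefore suffices that $\sum_{j>N}|\phi_j|^2\to0$ uniformly on compact sets. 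Each partial sum $\sum_{j\le N}|\phi_j|^2$ is continuous, and the full series converges pointwise to $\|k_z\|^2$. So by Dini's theorem it is enough to know that $z\mapsto\|k_z\|^2$ is continuous.

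\textbf{Main obstacle.} The key point is this continuity of $z\mapsto\|k_z\|^2$. I would get it from the identity
\[
\|k_z\|^2=\sup\{|f(z)|^2:\|f\|\le1\}.
\]
By Step 2 the unit ball of $\operatorname{H}^2_\varpi$ is locally uniformly bounded, so by Cauchy estimates it is an equicontinuous family on compact sets. The supremum of an equicontinuous family is continuous, which gives the continuity needed for Dini's theorem.
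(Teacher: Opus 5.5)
Your proposal is correct and follows the same route as the paper: Proposition~\ref{DLLW} supplies the Hilbert space structure and the embedding into $\operatorname{H}^p$ for some $p>1$, and the Poisson-majorant estimate for $|f|^p$ then gives bounded point evaluations, hence a reproducing kernel. The paper treats the Riesz representation and the locally uniform convergence of the series as standard; you prove them explicitly, including the Dini/equicontinuity argument for continuity of $\|k_z\|^2$, and that argument is sound.
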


\begin{remark}
If $\varpi$ is a continuous positive function on $\partial\Omega$, then $\varpi\in A_2(\partial\Omega)$. Consequently, the weighted Hardy space $\operatorname{H}^2_{\varpi}(\partial\Omega)$ coincides with the classical Hardy space $\operatorname{H}^2(\partial\Omega)$ as a set.
\end{remark}

\subsection*{Fefferman--Hardy Space}

Inspired by Fefferman's idea for $C^\infty$-smooth bounded strongly pseudoconvex domains \cite[P.~259]{fc74}, we define a nonnegative surface area measure on the boundary of a $C^2$-smooth bounded domain $\Omega=\{\rho<0\}\subset\mathbb C^n$ by
\[
d\sigma_{\operatorname{F}} := \varpi_{\operatorname{F}}\, d\sigma_{\mathrm{E}},
\qquad\qquad
\varpi_{\operatorname{F}} :=
\frac{1}{\abs{d\rho}}
\left|
\det\begin{pmatrix}
\rho & \partial_{\overline{j}}\rho \\[6pt]
\partial_i\rho_{i} & \partial_i \partial_{\overline{j}}\rho
\end{pmatrix}_{1\le i,j\le n}
\right|^{\frac{1}{n+1}}.
\]

\medskip
The measure $\sigma_{\operatorname{F}}$ is independent of the choice of defining function $\rho$. For each strongly pseudoconvex boundary point $\xi$, $\varpi_{\operatorname{F}}(\xi)>0$. If $\varpi_{\operatorname{F}}\in A_2(\partial\Omega)$, then by Proposition \ref{H2reproducing} the space $\operatorname{H}^2_{\varpi_{\operatorname{F}}}(\partial\Omega)$ is a reproducing kernel Hilbert space; we denote it by $\operatorname{H}^2_{\operatorname{F}}(\partial\Omega)$ and call it the \emph{Fefferman--Hardy space}. Its reproducing kernel $S(z,w)$ is called the \emph{Fefferman--Szeg\H o kernel}.

To state the transformation law for the Fefferman--Szegő kernel, we introduce the following notion.

\begin{definition}\label{biholo_cond}
    A biholomorphism $F:\Omega_1\to\Omega_2$ between two such domains is called a \textit{(T)-biholomorphism} if  
\begin{itemize}
    \item[(i)] $F$ extends to a $C^2$-smooth diffeomorphism $\overline{\Omega}_1\to\overline{\Omega}_2$;  
    \item[(ii)] the function $(\det J_{\mathbb C}F)^{\frac{n}{n+1}}$ admits a globally defined holomorphic branch on $\Omega_1$.
\end{itemize}
\end{definition}
\begin{remark}
If each $\Omega_i\subset\mathbb C^n$ is a $C^\infty$-smooth bounded strongly pseudoconvex domain, then Fefferman's celebrated theorem \cite{f79} implies that (i) always holds. This was later extended by Bell and Ligocka \cite{BL80,Bell81}. If $\Omega_1$ is simply connected, then (ii) holds. Since the egg domains $\mathcal{D}_{2m}$ considered in this paper are star-shaped, and hence simply-connected, (ii) is automatically satisfied and need not be addressed separately.
\end{remark}
We next recall the transformation formula for Fefferman surface area measure for which we need to mentioned the underlying domain $\Om$, we write $\varpi_{\operatorname{F}}^{\pa\Om},\sigma_{\operatorname{F}}^{\partial\Omega}$, $S_\Om$ and so on.
\begin{proposition}\cite{bl14,y25}\label{FS-transformation}
Let $F:\Om_1\to\Om_2$ be a biholomorphism that satisfy Defintion \ref{biholo_cond}, then
\[F^*(d\sigma_{\operatorname{F}}^{\pa\Om_2})=|\det J_{\mathbb C}F|^{\frac{2n}{n+1}}d\sigma_{\operatorname{F}}^{\pa\Om_1}.\]
 Consequently, we have
     \[S_{\Om_1}(z,w)= S_{\Om_2}(F(z),F(w))\det J_{\mathbb C}F(z)^{\frac{n}{n+1}}\overline{\det J_{\mathbb C}F(w)}^{\frac{n}{n+1}}.
    \]
\end{proposition}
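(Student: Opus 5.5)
The plan is to establish the measure identity first, by a direct computation with Fefferman's determinant, and then deduce the kernel identity from a unitary equivalence of the two Fefferman--Hardy spaces.

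\textbf{The measure identity.} For a $C^2$ function $\rho$ write
\[
J(\rho):=-\det\begin{pmatrix}\rho & \partial_{\bar j}\rho\\ \partial_i\rho & \partial_{i\bar j}\rho\end{pmatrix}.
\]
I will use two classical properties of this determinant (Fefferman):
\begin{enumerate}
\item[(a)] $J(\rho\circ F)=|\det J_{\mathbb C}F|^2\,J(\rho)\circ F$ for holomorphic $F$;
\item[(b)] $J(|h|^2\rho)=|h|^{2(n+1)}J(\rho)$ for holomorphic nonvanishing $h$.
\end{enumerate}
Both are checked by row and column operations. They need only first derivatives of $F$ and $h$, and second derivatives of $\rho$. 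Hence they hold up to the boundary under hypothesis (i) of Definition \ref{biholo_cond}, with $\det J_{\mathbb C}F$ continuous and nonvanishing on $\overline\Omega_1$.

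Next I rewrite the boundary measure intrinsically. Since $d\sigma_{\operatorname E}/|d\rho|$ is the unique boundary measure $\mu$ with $d\rho\wedge\mu=dV$ along $\{\rho=0\}$, the measure $\sigma_{\operatorname F}$ is characterized by $d\rho\wedge\sigma_{\operatorname F}=|J(\rho)|^{\frac1{n+1}}dV$ on $\partial\Omega$.

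Now take a defining function $\rho_2$ for $\Omega_2$ and set $\rho_1=\rho_2\circ F$, which is a $C^2$ defining function for $\Omega_1$. Pulling back, and using $F^*dV=|\det J_{\mathbb C}F|^2dV$ together with (a), gives
\[
d\rho_1\wedge F^*\sigma_{\operatorname F}^{\partial\Omega_2}
=|J(\rho_2)\circ F|^{\frac1{n+1}}|\det J_{\mathbb C}F|^2dV
=|\det J_{\mathbb C}F|^{2-\frac2{n+1}}|J(\rho_1)|^{\frac1{n+1}}dV.
\]
Since $2-\frac{2}{n+1}=\frac{2n}{n+1}$, this yields the first formula. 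By (b), $\sigma_{\operatorname F}$ does not depend on the choice of $\rho$, so using the specific $\rho_1$ is harmless.

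\textbf{The kernel identity.} Fix the branch $(\det J_{\mathbb C}F)^{\frac n{n+1}}$ provided by condition (ii). Define
\[
Uf:=(f\circ F)\,(\det J_{\mathbb C}F)^{\frac n{n+1}}.
\]
I claim $U:\operatorname H^2_{\operatorname F}(\partial\Omega_2)\to \operatorname H^2_{\operatorname F}(\partial\Omega_1)$ is unitary.
\begin{enumerate}
\item[(1)] \emph{Membership.} Because $F$ is a $C^2$ diffeomorphism of the closures, for every $\alpha$ there is $\beta$ such that $F(\Gamma_\alpha(\xi))\subset\Gamma_\beta(F(\xi))$. Near $\xi$ this holds because $dF(\xi)$ maps the normal and complex tangent directions compatibly up to bounded distortion; away from $\xi$ one uses compactness and $\delta\approx\delta\circ F$. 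With $|\det J_{\mathbb C}F|\approx1$, this gives $\mathcal N(Uf)\lesssim \mathcal N_\beta(f)\circ F$. Hence $Uf\in \operatorname H^2_{\operatorname F}(\partial\Omega_1)$, using that the Hardy space does not depend on the aperture (Proposition \ref{DLLW}).
\item[(2)] \emph{Boundary values and isometry.} The same containment gives $(Uf)^*=(f^*\circ F)(\det J_{\mathbb C}F)^{\frac n{n+1}}$. The measure identity then shows that $U$ is an isometry.
\item[(3)] \emph{Surjectivity.} The analogous map built from $F^{-1}$ is an inverse of $U$, so $U$ is onto.
\end{enumerate}

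Consequently, if $\{\phi_j\}$ is an orthonormal basis of $\operatorname H^2_{\operatorname F}(\partial\Omega_2)$, then $\{U\phi_j\}$ is an orthonormal basis of $\operatorname H^2_{\operatorname F}(\partial\Omega_1)$. The series expansion \eqref{series_exp} of Proposition \ref{H2reproducing} then gives the stated transformation rule for $S_{\Omega_1}$ directly.

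The step I expect to be the main obstacle is the comparison of approach regions in (1), which is what justifies the identification of boundary values. It requires a careful but elementary Taylor expansion of $F$ at boundary points, using only its $C^2$ regularity.
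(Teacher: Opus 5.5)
The paper does not prove this proposition; it quotes it from Barrett--Lee and Yuan. Your route is the standard one: Fefferman's transformation rule for $J$ gives the measure law, and then a unitary $U$ together with the expansion \eqref{series_exp} gives the kernel law. The measure half is correct. One slip: independence of $\sigma_{\operatorname F}$ from $\rho$ comes from the boundary identity $J(\phi\rho)=\phi^{n+1}J(\rho)$ on $\{\rho=0\}$, valid for every positive $\phi$ (with $|d(\phi\rho)|=\phi|d\rho|$ there). It does not come from (b), which only covers factors $|h|^2$ with $h$ holomorphic. Since the paper asserts this independence anyway, nothing is lost.

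\textbf{The genuine gap is step (1).} In the paper's (Stein's) regions, $\delta_\xi(z)$ involves the distance from $z$ to the tangent hyperplane at $\xi$.

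\emph{Why the Taylor argument fails.} The Taylor remainder of $F$ in the real normal direction is $O(|z-\xi|^2)=O(\alpha\,\delta_\xi(z))$, the same order as $\delta_\xi$ itself. A biholomorphism can also change the real second fundamental form in complex tangential directions. So the claimed inclusion $F(\Gamma_\alpha(\xi))\subset\Gamma_\beta(F(\xi))$ is false in general.

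\emph{Counterexample.} Take $\Omega_1=\{|z|^2+|w|^2<\operatorname{Re}w\}$ and $F(z,w)=(z,w-2z^2)$, so $\det J_{\mathbb C}F\equiv 1$ and (T) holds. For small $x>0$, set $p_x=(x,2x^2)$. Then $\operatorname{dist}(p_x,\partial\Omega_1)=x^2+O(x^4)$ and the distance to $\{\operatorname{Re}w=0\}$ is $2x^2$, so $p_x\in\Gamma_3(0)$. But $F(p_x)=(x,0)\in\Omega_2$ lies on the tangent hyperplane $\{\operatorname{Re}w=0\}$ of $\partial\Omega_2$ at $F(0)=0$. Hence $\delta_0(F(p_x))=0$, and $F(p_x)\notin\Gamma_\beta(0)$ for every $\beta$.

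Away from $\xi$, compactness does not rescue the inclusion either. The tangent hyperplane at $F(\xi)$ of a nonconvex $\Omega_2$ can pass through interior points. Consequently, neither $\mathcal N(Uf)\lesssim\mathcal N_\beta(f)\circ F$ nor the identification of $(Uf)^*$ in (2) follows as written.

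\textbf{Repair.} Two ingredients are needed.
\begin{enumerate}
\item Restrict to apertures $\alpha\le\alpha_0$, with $\alpha_0$ small in terms of $C^2$ bounds for $F$ and both boundaries. Then $C|z-\xi|^2\le C\alpha\,\delta_\xi(z)$ is absorbed by the normal component $\lambda(\xi)\,d_\xi(z)$ of $dF(\xi)(z-\xi)$, where $\lambda\ge\lambda_0>0$. So the inclusion does hold near $\xi$. On the part of $\Gamma_\alpha(\xi)$ outside a fixed neighbourhood of $\xi$, which lies in a fixed compact set, bound $|Uf|$ by the interior evaluation estimate instead of an inclusion.
\item You genuinely need aperture independence, $\|\mathcal N_\beta f\|_{L^2_\varpi}\lesssim\|\mathcal N_\alpha f\|_{L^2_\varpi}$ for $\beta>\alpha$, to pass between small and large apertures on both sides. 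This is not contained in Proposition \ref{DLLW} as quoted. It needs the usual covering argument with the doubling measure $\sigma_\varpi$, or a citation to the weighted theory.
\end{enumerate}
Alternatively, replace $\delta_\xi$ by $\operatorname{dist}(z,\partial\Omega)$. For those regions your Taylor argument works for every $\alpha$, but you must then show they define the same space with an equivalent norm, which is again a maximal-function estimate.
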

    
\subsection*{Fefferman--Szeg\H{o} metric}
We now define the  Fefferman–Szeg\H{o} metric on $\Omega$. For any $z\in\Omega$, let $S_{\Omega}(z):=S_{\Omega}(z,z)$ denote the diagonal value of the Fefferman--Szeg\H{o} kernel. By \eqref{series_exp}, the function 
$\log S_{\Omega}(z)$ is a $C^\infty$-smooth, strictly plurisubharmonic function on $\Omega$---which defines a K\"ahler metric on $\Omega$ called the \textit{Fefferman–Szeg\H{o} metric} $g_{\operatorname{FS}}^{\Omega}$ as follows:
\[g_{\operatorname{FS}}^\Omega=\sum_{i,j=1}^ng_{i\bar j}(z)dz_id\overline{z}_j,\qquad\qquad g_{i\bar j}(z):=\partial_i \partial_{\overline{j}}\log S_{\Omega}(z).\]
 
By Proposition \ref{FS-transformation}, this metric is invariant under (T)-biholomorphisms, i.e., for all $z\in\Om,~~X\in\mbb C^n$,  \[g^{\Om_1}_{\operatorname{FS},z}\big(X,X\big)=g^{\Om_2}_{\operatorname{FS},F(z)}\big(J_{\mbb C}F(z)X,J_{\mbb C}F(z)X\big).\] Using the similar reasoning as for the Bergman metric, it can be shown that the Fefferman--Szeg\H o metric dominates the Carath\'eodory; see \cite{bl14,y25}. This implies that the Fefferman–Szeg\H{o} metric is complete on any $C^{\infty}$-smooth bounded strongly pseudoconvex domain, as well as on the family of egg domains $\mathcal{D}_{2m}$.

The associated metric matrix is then written as $G_{\Omega}:=\left[g_{i\bar j}(z)\right]_{i,j=1}^{n}$.
The Riemann curvature tensor associated with the Fefferman–Szeg\H{o} metric $g_{\operatorname{FS}}^{\Omega}$ is given by
\begin{equation}
    R_{i\bar j k\bar l}(z):=-g_{i\bar jk\bar l}(z)+\sum_{ \alpha,\beta=1}^{n} g_{ik\bar\alpha}(z)g^{\bar\alpha\beta}(z)g_{\beta\bar j\bar l}(z)\quad\text{for }\quad i,j,k,l\in\left\{1,\cdots, n\right\},
\end{equation}
where $(g^{i\bar j})$ denotes the inverse of the metric matrix $(g_{i\bar j})$, and the higher-order derivatives are defined as
\begin{equation*}
    g_{i\bar jk\bar l}:=\partial_i \partial_{\overline{j}} g_{k\bar l},\quad g_{ik\bar \alpha}:=\partial_k g_{i\bar\alpha},\quad g_{\beta \bar j\bar l}:=\partial_{\overline{j}} g_{\beta\bar l}.
\end{equation*}
As $g_{\operatorname{FS}}^{\Omega}$ is a K\"ahler metric, its curvature tensor satisfies the following standard symmetry relations:
$$R_{i\bar jk\bar l}=R_{k\bar li\bar j}=\overline{R_{j\bar il\bar k}}.$$

The Ricci curvature of $g_{\operatorname{FS}}^{\Omega}$, denoted by $\operatorname{Ric}_{\operatorname{FS}}^{\Omega}$, is given by 
\begin{equation*}
\operatorname{Ric}_{\operatorname{FS}}^{\Omega}=\sum_{i,j=1}^n \operatorname{Ric}_{i\bar j} dz_i d\overline{z}_j,\qquad\qquad \operatorname{Ric}_{i\bar j}:=-\partial_i\partial_{\bar j}\log \det G_{\Omega}(z).
\end{equation*}
Furthermore, the components of the Ricci curvature and the Riemann curvature tensor are related by the following standard identity:
\begin{equation}\label{Ric-ibarj}
         \operatorname{Ric}_{i\bar j} = \sum_{k,\ell=1}^n g^{\ell\bar k} R_{i\bar j k\bar \ell}, \qquad
    R_{i\bar j k\bar \ell}=-g_{i\bar j k\bar \ell}
    +\sum_{\alpha,\beta=1}^n g^{\alpha\bar\beta} g_{i\bar\beta k} g_{\alpha\bar j \bar \ell}.
    \end{equation}

The Fefferman-Szeg\H{o} metric $g_{\operatorname{FS}}^{\Omega}$ is called a K\"ahler-Einstein if there exists a real constant $\lambda$ such that on $\Omega$, 
\begin{equation*}
\operatorname{Ric}_{\operatorname{FS}}^{\Omega}=\lambda g_{\operatorname{FS}}^{\Omega}.
\end{equation*}
\section{Fefferman--Szeg\H o kernel} 
Our first goal is to establish the existence of the Fefferman--Szeg\H{o} kernel on the egg domain $\mathcal D_{2m}:=\bigl\{(z,w)\in\mathbb C^2 : \rho(z,w)=|z|^2+|w|^{2m}-1<0\bigr\}.$
In view of Proposition \ref{H2reproducing}, we first recall the notion of a space of homogeneous type on the boundary $(\partial\mathcal D_{2m},d,dS)$, as introduced in \cite{h99}.
  The quasimetric  
\[
d(\zeta,\eta)=|v(\zeta,\eta)|+|v(\eta,\zeta)|,\qquad
v(\zeta,\eta)=\langle\partial\rho(\zeta),\zeta-\eta\rangle,
\]
 where $\langle\cdot,\cdot\rangle$ denotes the standard complex bilinear pairing on $\mathbb C^2$. 
 
 For $\zeta=(z,w),\eta=(z_0,w_0)\in\mathbb C^2$,
\begin{equation}\label{defn:v}
    v(\zeta,\eta)=\bar z(z-z_0)+m\bar w|w|^{2m-2}(w-w_0).
\end{equation}
The associated surface area measure is  
\[
dS=(2\pi i)^{-2}\partial\rho\wedge\bar\partial\partial\rho.
\]
From \cite{h99}, one can see that $dS$ satisfies 
    \[dS\approx 1+m^2|w|^{2(m-1)} d\sigma_{\operatorname{E}}\approx d\sigma_{\operatorname{E}}.\]
\subsection{Existence of the Fefferman--Szego kernel}   In this subsection, we prove Theorem \ref{feff_density_Ap}. The proof proceeds in three steps. First, we compute the explicit asymptotic behaviour of the Fefferman weight function $\varpi_{\mathrm{F}}$ on $\partial \mathcal{D}_{2m}$, showing that it vanishes like $|w|^{2(m-1)/3}$ near the degenerate circle $L=\{(z,0):|z|=1\}$. Second, we establish a comparison lemma relating the quasimetric distance to the singular set $L$ with $|w|^{2m}$. Finally, we verify the $A_2$-condition by analyzing three cases for a quasimetric ball $\mathrm{B}=\mathrm{B}_d(\zeta,r)$: (i) far from $L$; (ii) radius bounded below; (iii) close to $L$ with small radius. The rotational invariance reduces the last case to a local computation near $\zeta_0=(1,0)\in L$.

\begin{proof}[Proof of Theorem \ref{feff_density_Ap}]
Recall that the Fefferman surface measure on $\partial\mathcal D_{2m}$ is  
\[
d\sigma_{\operatorname{F}}= \varpi_{\operatorname{F}}\,d\sigma_{\mathrm E},\qquad 
\varpi_{\operatorname{F}}= \frac{\bigl(\det(\rho_{i\bar j})\bigr)^{\frac1{3}}}{\|\nabla\rho\|},
\]
where $\rho(z,w)=|z|^2+|w|^{2m}-1$.

\subsection*{The asymptotic behaviour of $\varpi_{\operatorname{F}}$}
A direct computation gives  
\[
\rho_z=\bar z,\quad \rho_{\bar z}=z,\quad \rho_{z\bar z}=1,\quad
\rho_w=m\bar w|w|^{2m-2},\quad \rho_{\bar w}=mw|w|^{2m-2},\quad \rho_{w\bar w}=m^2|w|^{2m-2},
\]
and $\rho_{z\bar w}=\rho_{w\bar z}=0$. Hence  \[
\det(\rho_{i\bar j})=m^2|w|^{2m-2}.
\]

On the boundary $\{\rho=0\}$, we have $|z|^2=1-|w|^{2m}$, so  

\[
\|\nabla\rho\|=\sqrt{|z|^2+m^2|w|^{4m-2}}=
\sqrt{1-|w|^{2m}+m^2|w|^{4m-2}}\approx 1.
\]
 Therefore  
\begin{equation}\label{asym_weightfn}
    \varpi_{\mathrm F}(z,w)\approx |w|^{\frac{2m-2}{3}}. 
\end{equation}
The weight function $\varpi_{\operatorname{F}}$ vanishes on the circle  
\[
L:=\{(z,0)\in\mathbb C^2:|z|=1\}.
\]

Since $\varpi_{\operatorname{F}}$ vanishes on $L$, by rotational invariance of $d$ and $dS$ we consider the parametrisation near $\zeta_0=(1,0)\in L$ as follows
\[
\Phi:(-\pi,\pi]\times\mathbb D\to\partial\mathcal D_{2m},\qquad
\Phi(\theta,w)=\bigl(e^{i\theta}\sqrt{1-|w|^{2m}},\,w\bigr),
\]
where $\Im\Phi=\{(z,w)\in\pa\mathcal{D}_{2m}:|w|<1\}$.

By direct computation, it can seen that 
\begin{equation*}
    \Phi^*(dS)=\frac{m^2}{4\pi^2}|w|^{2m-2}d\theta\wedge dw\wedge d\overline{w}.
\end{equation*}
In polar coordinates $w=se^{i\varphi}$, this becomes
\begin{equation}\label{dS_polar-coordinates}
    \Phi^*(dS)=\frac{m^2}{2\pi^2}s^{2m-1}d\theta\; ds\; d\varphi\qquad\text{ and }\qquad \Phi^*(\varpi_{\operatorname{F}})\approx s^{\frac{2m-2}{3}}.
\end{equation}

\begin{lemma}
For small $|\theta|,|w|$, we have
\[
d(\Phi(\theta,w),\zeta_0) \approx |\theta| + |w|^{2m}.
\]
\end{lemma}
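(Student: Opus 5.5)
We compute both terms of $d(\zeta,\zeta_0)=|v(\zeta,\zeta_0)|+|v(\zeta_0,\zeta)|$ directly from \eqref{defn:v}, with $\zeta=\Phi(\theta,w)=(z,w)$, $z=e^{i\theta}\sqrt{1-|w|^{2m}}$, and $\zeta_0=(1,0)$. The lower bound comes from imaginary and real parts separately; the upper bound is Taylor expansion.

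\emph{The term $v(\zeta_0,\zeta)$.} Since $\partial\rho(\zeta_0)=(1,0)$,
\[
v(\zeta_0,\zeta)=1-z=1-e^{i\theta}\sqrt{1-|w|^{2m}}.
\]
Write $a:=\sqrt{1-|w|^{2m}}$. Then $1-a=\tfrac12|w|^{2m}+O(|w|^{4m})$, and
\[
1-ae^{i\theta}=(1-a\cos\theta)-ia\sin\theta .
\]
The imaginary part has modulus $a|\sin\theta|\approx|\theta|$ for small $\theta$. The real part is
\[
(1-a)+a(1-\cos\theta)=\tfrac12|w|^{2m}+O(|w|^{4m})+O(\theta^2),
\]
which is nonnegative and $\gtrsim|w|^{2m}$. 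Hence
\[
|v(\zeta_0,\zeta)|\ \ge\ \max\bigl(a|\sin\theta|,\ \operatorname{Re}\bigr)\ \gtrsim\ |\theta|+|w|^{2m}.
\]
For the reverse direction, $|1-ae^{i\theta}|\le(1-a)+|1-e^{i\theta}|\lesssim|w|^{2m}+|\theta|$.

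\emph{The term $v(\zeta,\zeta_0)$.} We have
\[
v(\zeta,\zeta_0)=\bar z(z-1)+m\bar w|w|^{2m-2}w=|z|^2-\bar z+m|w|^{2m}.
\]
Here $|z|^2=1-|w|^{2m}$ and $|\bar z|=a\le1$, so
\[
|v(\zeta,\zeta_0)|\le |1-\bar z|+(m-1)|w|^{2m}=|v(\zeta_0,\zeta)|+(m-1)|w|^{2m}\lesssim|\theta|+|w|^{2m}.
\]
This term is only needed for the upper bound, since the lower bound already follows from $d\ge|v(\zeta_0,\zeta)|$.

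Combining the two terms gives $d(\Phi(\theta,w),\zeta_0)\approx|\theta|+|w|^{2m}$, with constants depending only on $m$, for $|\theta|,|w|$ small.

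The only delicate point is the lower bound. There one must use the real and imaginary parts separately, because the $\theta^2$ correction in the real part could a priori cancel against $|w|^{2m}$. Using $1-\cos\theta\ge0$ shows the real part is always at least $(1-a)\approx|w|^{2m}$, so no cancellation occurs.
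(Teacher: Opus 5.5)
Your proof is correct, and it takes a somewhat different and cleaner route than the paper's.

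\textbf{The paper's route.} The paper works with $v(\zeta,\zeta_0)=1+(m-1)t-r\cos\theta+ir\sin\theta$. It Taylor-expands the real part to $\frac{\theta^2}{2}\bigl(1-\frac t2\bigr)+\bigl(m-\frac12\bigr)t+O(\theta^4+t^2)$ and argues that the positive terms dominate. It then simply asserts the comparability $|v(\zeta_0,\zeta)|\approx|v(\zeta,\zeta_0)|$, taken from the literature on this quasimetric, to pass to $d$.

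\textbf{Your route.} You use the other half of the quasimetric. Because $\partial\rho(\zeta_0)=(1,0)$, you get $v(\zeta_0,\zeta)=1-z$, whose real part splits exactly as $(1-a)+a(1-\cos\theta)$ with both summands nonnegative. This gives the lower bound with no remainder terms to control. For the upper bound, your exact identity $v(\zeta,\zeta_0)=\overline{v(\zeta_0,\zeta)}+(m-1)|w|^{2m}$ replaces the appeal to quasi-symmetry. In fact both real parts are nonnegative and the imaginary parts agree up to sign, so the identity yields
\[
|v(\zeta_0,\zeta)|\le|v(\zeta,\zeta_0)|\le|v(\zeta_0,\zeta)|+(m-1)|w|^{2m}.
\]
That is, it proves the comparability the paper only asserts.

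\textbf{What each buys.} Your argument is self-contained and gives explicit constants. The paper's route stays closer to the general framework, where only the approximate symmetry of $v$ is available and no such exact identity exists.
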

\begin{proof}
From \eqref{defn:v}, for $\zeta=(z,w)$ with $|w|<1$ we have
\[
v(\zeta,\zeta_0)=\overline{z}(z-1)+m|w|^{2m}.
\]
Write $z=re^{i\theta}$ with $r=\sqrt{1-t}$ where $t=|w|^{2m}$. Then
\[
\operatorname{Re}v(\zeta,\zeta_0) = 1+(m-1)t - r\cos\theta,\qquad 
\operatorname{Im}v(\zeta,\zeta_0) = -r\sin\theta.
\]
Using $r=1-t/2+O(t^2)$ and $\cos\theta=1-\theta^2/2+O(\theta^4)$, we obtain
\[
\operatorname{Re}v(\zeta,\zeta_0) = \frac{\theta^2}{2}\left(1-\frac{t}{2}\right) + \left(m-\frac12\right)t + O(\theta^4+t^2).
\]
For small $|\theta|,t$, the positive terms dominate, giving $|\operatorname{Re}v(\zeta,\zeta_0)|\gtrsim \theta^2+t$. Also $|\operatorname{Im}v(\zeta,\zeta_0)|=r|\sin\theta|\gtrsim |\theta|$. Hence $|v(\zeta,\zeta_0)|\gtrsim |\theta|+t$. The reverse inequality is straightforward, so $|v(\zeta,\zeta_0)|\approx |\theta|+t$. Because $|v(\zeta_0,\zeta)|\approx |v(\zeta,\zeta_0)|$, we have $d(\zeta,\zeta_0)\approx |\theta|+t$. Since $\zeta\in\Im\Phi$, we are done. 
\end{proof}
It follows that \text{for small enough } $\epsilon_0 > 0 \text{ there exist constants } c_1, c_2 > 0 \text{ such that}$
\begin{equation}\label{subsets_inclusion}
    \{|\theta| < c_1 \epsilon_0,\ |w| < c_1 \epsilon_0^{1/(2m)}\} \subset \Phi^{-1}\bigl(\operatorname{B}_d(\zeta_0,\epsilon_0)\bigr) \subset \{|\theta| < c_2 \epsilon_0,\ |w| < c_2 \epsilon_0^{1/(2m)}\}.
\end{equation}

\subsection*{A comparison lemma} We next need the following lemma which compares the quasimetric distance to the singular set $L$ with $|w|^{2m}$.
\begin{lemma}\label{lem:distL}
There exist constants $c,C>0$ such that for every $\zeta=(z,w)\in\partial\mathcal D_{2m}$,
\[
c|w|^{2m}\le d(\zeta,L)\le C|w|^{2m}.
\]
\end{lemma}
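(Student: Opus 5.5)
The plan is to work directly from the formula \eqref{defn:v} for $v$ and to interpret $d(\zeta,L)=\inf_{\ell\in L} d(\zeta,\ell)$. Points of $L$ have the form $\ell=(e^{i\varphi},0)$. Throughout, set $t:=|w|^{2m}$, so that on $\partial\mathcal D_{2m}$ we have $|z|^2=1-t$ and $t\in[0,1]$. I expect to prove both inequalities with explicit constants and no localisation. In particular, the rotational reduction and the previous lemma will not be needed.

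\emph{Upper bound.} If $z\neq 0$, I will test against the point $\ell=(z/|z|,0)\in L$. By \eqref{defn:v},
\[
v(\zeta,\ell)=\bar z\bigl(z-z/|z|\bigr)+m|w|^{2m}=|z|^2-|z|+mt=(1-t)-\sqrt{1-t}+mt .
\]
Since $\partial\rho(\ell)=(\bar\ell_1,0)$, we also have
\[
v(\ell,\zeta)=\bar\ell_1(\ell_1-z)=1-|z|=1-\sqrt{1-t}.
\]
Using the elementary bounds $1-t\le\sqrt{1-t}\le 1-t/2$, both quantities are nonnegative. Moreover $|v(\zeta,\ell)|\le mt$ and $|v(\ell,\zeta)|\le t$. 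Hence $d(\zeta,L)\le d(\zeta,\ell)\le (m+1)t$.

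In the remaining case $z=0$ we have $|w|=1$, so $t=1$. Here $d(\zeta,L)$ is bounded by a constant, because $d$ is continuous on the compact set $\partial\mathcal D_{2m}\times\partial\mathcal D_{2m}$. Enlarging $C$ therefore covers this case as well.

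\emph{Lower bound.} I will discard the term $|v(\ell,\zeta)|$ and bound $|v(\zeta,\ell)|$ from below by its real part, uniformly in $\ell\in L$. Writing $z=|z|e^{i\theta}$ and $\ell=(e^{i\varphi},0)$, we get
\[
\operatorname{Re}v(\zeta,\ell)=|z|^2-|z|\cos(\theta-\varphi)+mt\ \ge\ |z|^2-|z|+mt=(1-t)-\sqrt{1-t}+mt.
\]
Applying $\sqrt{1-t}\le 1-t/2$ gives
\[
\operatorname{Re}v(\zeta,\ell)\ge \Bigl(m-\tfrac12\Bigr)t\ge \tfrac12\,t .
\]
Consequently $d(\zeta,\ell)\ge |v(\zeta,\ell)|\ge t/2$ for every $\ell\in L$. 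Taking the infimum over $\ell$ yields $d(\zeta,L)\ge \tfrac12|w|^{2m}$.

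The only delicate point is making the lower bound uniform in the base point $\ell$. Without that uniformity one would only control $d(\zeta,\ell)$ for $\ell$ near $z/|z|$, which is exactly what the local computation in the previous lemma gives. The inequality $\cos(\theta-\varphi)\le 1$ removes this dependence at once. That is why I work with $\operatorname{Re}v(\zeta,\ell)$ rather than with $v(\ell,\zeta)$: the latter can be small in modulus for badly chosen $\ell$ only through cancellations that are harder to control. I do not expect any further obstacle. The constants obtained are $c=1/2$ and $C=\max\{m+1,\sup d\}$.
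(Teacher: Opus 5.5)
Your proof is correct. The upper bound follows the paper and uses the same test point $(z/|z|,0)$. You compute both terms exactly, $v(\zeta,\ell)=(1-t)-\sqrt{1-t}+mt$ and $v(\ell,\zeta)=1-\sqrt{1-t}$; the paper writes $|v(\eta,\zeta)|=|w|^{2m}$, which is really only an upper bound. For $z=0$ you use continuity of $d$ on the compact set $\partial\mathcal D_{2m}\times\partial\mathcal D_{2m}$, whereas the paper uses the point $(w,0)$ and an external lemma; directly, $d(\zeta,(w,0))=m+1$.

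The lower bound is where you genuinely differ. The paper uses both halves of $d$: with $\Delta=z-e^{i\theta}$ one has $|v(\eta,\zeta)|=|\Delta|$, and this absorbs the cross term $\bar z\Delta$ in $v(\zeta,\eta)=\bar z\Delta+m|w|^{2m}$. That gives the sharper constant $m$. As written, however, the paper's intermediate inequality $\bigl|m|w|^{2m}-|\bar z\Delta|\bigr|\ge\bigl|m|w|^{2m}-|\Delta|\bigr|$ fails in general. For small $|w|>0$ and suitable $\theta$ one has $|z||\Delta|=m|w|^{2m}$, so the left side vanishes while the right side equals $|\Delta|(1-|z|)>0$. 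The paper's conclusion is recovered by using the signed bound $|v(\zeta,\eta)|\ge m|w|^{2m}-|\Delta|$ instead. Your route discards $v(\ell,\zeta)$ and estimates $\operatorname{Re}v(\zeta,\ell)$ using $\cos\le 1$ and $\sqrt{1-t}\le 1-t/2$. It costs a little in the constant ($m-\tfrac12$, which you could keep instead of $\tfrac12$), but it involves no delicate handling of absolute values.

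One correction to your closing paragraph: your reason for avoiding $v(\ell,\zeta)$ is mistaken. Since $\partial\rho(\ell)=(e^{-i\varphi},0)$, one has $|v(\ell,\zeta)|=|e^{i\varphi}-z|\ge 1-|z|=1-\sqrt{1-t}\ge t/2$ uniformly in $\ell$. So that term alone would give the lower bound in one line.
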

\begin{proof}
    \textit{Upper bound.} If $z\neq0$, take $\eta=(z/|z|,0)\in L$. Then 
    \[|v(\zeta,\eta)|\le (1+m)|w|^{2m}\qquad\text{ and }\qquad|v(\eta,\zeta)|=|w|^{2m},\] so we have \[d(\zeta,\eta)\le (m+2)|w|^{2m}.\] 
    If $z=0$ (so $|w|=1$), choose $\eta=(w,0)\in L$; then $v(\zeta,\eta)=m$. By \cite[Lemma 1]{h99}, we have $|v(\eta,\zeta)|\approx m$, hence $d(\zeta,\eta)\le C'm$ for some $C'$, and since $|w|^{2m}=1$, we obtain \[d(\zeta,L)\le C|w|^{2m}\qquad\text{ with }\qquad C=\max(m+2,\,C'm).\] The case $|z|=1\implies w=0$ gives $\zeta\in L$ and the upper bound holds trivially.

\medskip

\textit{Lower bound.} For any $\eta=(e^{i\theta},0)\in L$, set $\Delta=z-e^{i\theta}$. Then
\[
|v(\zeta,\eta)|=|\bar z\Delta+m|w|^{2m}|\ge \bigl| m|w|^{2m}-|\bar z\Delta|\bigr|
\ge \bigl| m|w|^{2m}-|\Delta|\bigr|,
\]
because $|\bar z\Delta|\le|\Delta|$. Hence
\[
d(\zeta,\eta)=|v(\zeta,\eta)|+|v(\eta,\zeta)|
\ge \bigl| m|w|^{2m}-|\Delta|\bigr|+|\Delta|
\ge m|w|^{2m}.
\]
 Taking the infimum over $\eta\in L$ yields $d(\zeta,L)\ge m|w|^{2m}$.
\end{proof}

\subsection*{Verification of the $A_2$ condition}

We finally show that for every quasimetric ball $\operatorname{B}=\operatorname{B}_d(\zeta,r)\subset\partial\mathcal D_{2m}$,
\[
\left\langle \varpi_{\operatorname{F}} \right\rangle_{\operatorname{B}}
\left\langle \varpi_{\operatorname{F}}^{-1} \right\rangle_{\operatorname{B}}
\]
is bounded above by a constant independent of $\operatorname{B}$.  

The proof splits into three cases, depending on $d_0:=d(\zeta,L)$
and $r$. Let $K$ be the quasimetric constant and fix $\delta>0$ sufficiently small such that $K(1+2K)\delta<\epsilon_0$ that will be useful further.

\subsection*{Case I} Let $d_0\ge 2Kr$.
 For any $\xi\in \operatorname{B}$, the quasitriangle inequality give  
\[
d(\xi,L) <K(r+d_0)\le \Bigl(K+\frac12\Bigr)d_0,
\]
and
\[\frac{d_0}{2K}\leq \frac{d_0-Kr}{K}<d(\xi,\eta)\qquad\qquad\text{ for any }\quad\eta\in L,\]
taking the infimium over $\eta\in L$ yields $d(\xi,L)\ge \dfrac{d_0}{2K}$. 

Thus on $\operatorname{B}$, by Lemma~\ref{lem:distL}, \[d_0\approx |w|^{2m}.\] 
Using (\ref{asym_weightfn}), this implies on $\operatorname{B}$,
\[
\varpi_{\mathrm F}(\xi)\approx d_0^{\frac{2m-2}{6m}},\qquad\qquad
\varpi_{\mathrm F}^{-1}(\xi)\approx d_0^{-\frac{2m-2}{6m}}.
\]
 Consequently  
\[
\frac1{|\operatorname{B}|_{dS}}\int_{\operatorname{B}}\varpi_{\mathrm F}\,d S\approx d_0^{\frac{2m-2}{6m}},\qquad
\frac1{|\operatorname{B}|_{dS}}\int_{\operatorname{B}}\varpi_{\mathrm F}^{-1}\,d S\approx d_0^{-\frac{2m-2}{6m}},
\]
and therefore $
\left\langle \varpi_{\mathrm{F}} \right\rangle_{\operatorname{B}}
\left\langle \varpi^{-1}_{\mathrm{F}} \right\rangle_{\operatorname{B}}
\approx 1$ uniformly for all such balls.

\medskip

\subsection*{Case II} Let $r\ge\delta$. Then  $|\operatorname{B}_d(\zeta,r)|_{dS}\ge |\operatorname{B}_d(\zeta,\delta)|\approx \delta^2>0$ by Lemma 1 and Lemma 3 of \cite{h99}.  
We first check the integrability of $\varpi_{\operatorname{F}}$ and $\varpi_{\operatorname{F}}^{-1}$ on $\partial\mathcal{D}_{2m}$. 

Using \eqref{dS_polar-coordinates}, we have
\[I_1:=\int_{\partial\mathcal{D}_{2m}}\varpi_{\mathrm F}\,dS\approx 2m^2 \int_{s=0}^{1} s^{\frac{8m-5}{3}} \, ds<\infty\]
because $(8m-5)/3>-1$ for all $m\geq 1$. 
   
\[ I_2:=\int_{\partial\mathcal D_{2m}}\varpi_{\mathrm F}^{-1}\,dS \approx 2m^2 \int_{s=0}^{1} s^{2m-1-\frac{2m-2}{3}}\, ds.\]
The above integral converges if and only if \[2m-1-\frac{2m-2}{3}>-1.\]

Thus, we have $\left\langle \varpi_{\mathrm{F}} \right\rangle_{\operatorname{B}}
\left\langle \varpi^{-1}_{\mathrm{F}} \right\rangle_{\operatorname{B}}\lesssim I_1 \cdot I_2<\infty.$

\medskip

\subsection*{Case III} Let $d_0<2Kr$ where $r<\delta$ is sufficiently small.  
There exists a point $\zeta_1\in L$ such that \[d(\zeta,\zeta_1)<2Kr.\] Define $R:=K(1+2K)r<\delta$,  $\operatorname{B}^*:=\operatorname{B}_d(\zeta_1,R)\text{ and }\operatorname{B}_0^*:=\operatorname{B}_d(\zeta_0,R)$. Using the quasitriangle inequality, we have \begin{equation}\label{ball_inclusion}
    \operatorname{B}\subset \operatorname{B}^*
\end{equation} By the doubling property of $(\partial\mathcal D_{2m},d,dS)$,  \begin{equation}\label{doubling}
    |\operatorname{B}^*|_{dS}\le C'|\operatorname{B}|_{dS}.
\end{equation}  
for some constant $C'>1$. By rotational invariance of $(\partial\mathcal D_{2m},d,dS)$, and using \eqref{ball_inclusion} and \eqref{doubling}, 
\begin{align*}
\langle\varpi_{\operatorname{F}}\rangle_{\operatorname{B}}&\lesssim \langle\varpi_{\operatorname{F}}\rangle_{\operatorname{B}^*}=\langle\varpi_{\operatorname{F}}\rangle_{\operatorname{B}^*_0};\\
\left\langle\varpi_{\operatorname{F}}^{-1}\right\rangle_{\operatorname{B}}&\lesssim \left\langle\varpi_{\operatorname{F}}^{-1}\right\rangle_{\operatorname{B}^*}=\left\langle\varpi_{\operatorname{F}}^{-1}\right\rangle_{\operatorname{B}^*_0}.
\end{align*}
Thus, it is suffices to focus on $\operatorname{B}_0^*$. So, consider
 \begin{multline*}
\langle\varpi_{\operatorname{F}}\rangle_{\operatorname{B}^*_0}\approx \frac{1}{R^2}\int_{\operatorname{B}^*_0}\varpi_{\operatorname{F}} \;dS\approx \frac{m^2}{2\pi^2}\frac{1}{R^2}2\pi\int_{\theta=-R}^R\int_{s=0}^{R^{\frac{1}{2m}}}s^{\frac{8m-5}{3}}\;ds\;d\theta\\
\approx \frac{1}{R}\left[s^{\frac{8m-5}{3}+1}\right]_{s=0}^{R^{\frac{1}{2m}}}=\frac{1}{R}\left[R^{\frac{8m-2}{6m}}\right]
     =R^{\frac{m-1}{3m}}.
 \end{multline*}
And,
 \begin{multline*}
    \left\langle\varpi_{\operatorname{F}}^{-1}\right\rangle_{\operatorname{B}^*_0}\approx \frac{1}{R^2}\int_{\operatorname{B}_0^*}\varpi_F^{-1}\; dS\\\approx\frac{m^2}{2\pi^2}\frac{1}{R^2}2\pi\int_{\theta=-R}^R\int_{s=0}^{R^{\frac{1}{2m}}}s^{2m-1-\frac{2m-2}{3}}\, ds\;d\theta
     \approx\frac{1}{R}\int_{s=0}^{R^{\frac{1}{2m}}}s^{2m-1-\frac{2m-2}{3}}ds. 
 \end{multline*}
 We have
\[\left\langle\varpi_{\operatorname{F}}^{-1}\right\rangle_{\operatorname{B}^*_0}\approx\frac{1}{R}R^{\frac{1-(m-1)}{3m}}=R^{\frac{-(m-1)}{3m}}.\]
 
Hence,
 \[ \langle\varpi_{\operatorname{F}}\rangle_{\operatorname{B}}\left\langle\varpi_{\operatorname{F}}^{-1}\right\rangle_{\operatorname{B}}\approx R^{\frac{m-1}{3m}}R^{-\frac{m-1}{3m}}=1.\] 
  This completes the proof. 
  \end{proof}
Next, we will derive a closed formula of the Fefferman Szeg\H o kernel $S_{\mathcal{D}_{2m}}$. 

\subsection{Closed form of the Fefferman--Szeg\H o kernel on $\mathcal{D}_{2m}$} We will need the following lemmas to derive the closed form of $S_{\mathcal{D}_{2m}}$.
 \begin{lemma}\label{tau}
     For $m\in\mathbb{Z}^+$, let $\mathcal{D}_{2m}=\{(z,w)\in\mathbb{C}^2:|z|^2+|w|^{2m}<1\}$. For any $\zeta=(z_1,w_1)$, $\eta=(z_2,w_2)\in \mathcal{D}_{2m}$, we have
     \begin{equation}\label{equat1}
         |w_1\overline w_2|<|1-z_1\overline z_2|^{\frac{1}{m}}.
     \end{equation}
 \end{lemma}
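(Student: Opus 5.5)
Since $\zeta,\eta\in\mathcal D_{2m}$, we have $|w_1|^{2m}<1-|z_1|^2$ and $|w_2|^{2m}<1-|z_2|^2$. Multiplying these two strict inequalities between nonnegative quantities gives
\[
|w_1\overline w_2|^{2m}<(1-|z_1|^2)(1-|z_2|^2).
\]
It therefore suffices to prove the elementary inequality
\[
(1-|z_1|^2)(1-|z_2|^2)\le |1-z_1\overline z_2|^2\qquad\text{for } z_1,z_2\in\mathbb D.
\]
Combining the two and taking $2m$-th roots then yields \eqref{equat1}.

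Note that $|z_j|^2+|w_j|^{2m}<1$ forces $|z_j|<1$, so the points $z_1,z_2$ do lie in $\mathbb D$. For the elementary inequality, I will use the classical identity
\[
|1-z_1\overline z_2|^2-(1-|z_1|^2)(1-|z_2|^2)=|z_1-z_2|^2,
\]
which follows by expanding both sides: each side reduces to $|z_1|^2+|z_2|^2-2\operatorname{Re}(z_1\overline z_2)$. Equivalently, this is the statement that the pseudo-hyperbolic distance $|z_1-z_2|/|1-z_1\overline z_2|$ is less than $1$ on the disc. The difference is nonnegative, which proves the inequality.

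There is no real obstacle here. The only points to check are the following. First, the strict inequality survives: it comes from the strict inequality for $|w_1\overline w_2|^{2m}$, and the case $w_1w_2=0$ is trivial since the right-hand side is positive. Second, $|1-z_1\overline z_2|>0$, so the root $|1-z_1\overline z_2|^{1/m}$ is well defined and positive. This positivity also matters for what follows: it shows $1-z_1\overline z_2$ lies in the right half-plane, so the principal branches of $(1-z_1\overline z_2)^{1/m}$ appearing in Theorem \ref{feff-sgo-egg-doms} are well defined. Moreover, the denominator $(1-z_1\overline z_2)^{1/m}-w_1\overline w_2$ never vanishes, because by \eqref{equat1} the modulus of the first term strictly exceeds that of the second.
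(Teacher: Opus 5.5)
Your proof is correct and is essentially the paper's argument. Like the paper, you bound $|w_1\overline w_2|^{2m}$ by $(1-|z_1|^2)(1-|z_2|^2)$, use the identity $|1-z_1\overline z_2|^2-(1-|z_1|^2)(1-|z_2|^2)=|z_1-z_2|^2$, and take roots. One side remark is imprecise: $|1-z_1\overline z_2|>0$ alone does not place $1-z_1\overline z_2$ in the right half-plane. That instead follows from $|z_1\overline z_2|<1$, and the slip does not affect the lemma.
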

\begin{proof}
  For $\zeta=(z_1,w_1),\eta=(z_2,w_2)\in\mathcal{D}_{2m}$ we have $|w_i|^{2m}<1-|z_i|^2$ for $i=1,2$. Hence
\begin{equation}\label{modulus:w1w2}
    |w_1\overline w_2| < (1-|z_1|^2)^{\frac{1}{2m}}(1-|z_2|^2)^{\frac{1}{2m}}.
\end{equation}
A direct computation shows
\[
|1-z_1\overline z_2|^2 - (1-|z_1|^2)(1-|z_2|^2)=|z_1-z_2|^2\ge 0,
\]
so
\[
(1-|z_1|^2)^{\frac12}(1-|z_2|^2)^{\frac12}\le |1-z_1\overline z_2|.
\]
Raising both sides to the power $1/m$ yields
\begin{equation}\label{1/m-th}
    (1-|z_1|^2)^{\frac{1}{2m}}(1-|z_2|^2)^{\frac{1}{2m}}\le |1-z_1\overline z_2|^{\frac1m}. 
\end{equation}
Combining \eqref{modulus:w1w2} and \eqref{1/m-th} gives the desired inequality.
\end{proof}
\begin{lemma}
Let $U = (\overline{\mathcal{D}}_{2m}\times\overline{\mathcal{D}}_{2m}) \setminus E$, where $E = \bigl\{(\zeta,\eta)\in \partial\mathcal{D}_{2m}\times\partial\mathcal{D}_{2m} : \zeta = \eta \bigr\}$
is the boundary diagonal. For any $\zeta=(z_1,w_1),\eta=(z_2,w_2)\in U$, define $G(\zeta,\eta)=1-z_1\overline{z}_2$. Then $\operatorname{Re}\bigl(G(\zeta,\eta)\bigr) > 0$. 
\end{lemma}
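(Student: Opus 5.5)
Since $(\zeta,\eta)$ lies in $\overline{\mathcal D}_{2m}\times\overline{\mathcal D}_{2m}$, we have $|z_1|\le 1$ and $|z_2|\le 1$. Then
\[
\operatorname{Re} G(\zeta,\eta)=1-\operatorname{Re}(z_1\overline z_2)\ge 1-|z_1||z_2|\ge 0 .
\]
So the only task is to rule out equality, and we do this by analysing the equality case.

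Suppose $\operatorname{Re}(1-z_1\overline z_2)=0$. Both inequalities above must then be equalities, which forces the following:
\begin{itemize}
\item $|z_1||z_2|=1$, so $|z_1|=|z_2|=1$;
\item $\operatorname{Re}(z_1\overline z_2)=|z_1\overline z_2|$, so $z_1\overline z_2$ is a nonnegative real number, and since its modulus is $1$, we get $z_1\overline z_2=1$;
\item hence $z_1=z_2$.
\end{itemize}
Next, $|z_i|=1$ together with $|z_i|^2+|w_i|^{2m}\le 1$ gives $w_1=w_2=0$. Therefore $\zeta=\eta=(z_1,0)$ with $|z_1|=1$. In particular $\zeta\in\partial\mathcal D_{2m}$, since $\rho(\zeta)=0$. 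Thus $(\zeta,\eta)\in E$, which contradicts $(\zeta,\eta)\in U$.

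Consequently $\operatorname{Re} G>0$ on $U$. There is no real obstacle in this argument. The one step needing care is the equality analysis, specifically recognising that equality in $\operatorname{Re}(a)\le|a|$ with $|a|=1$ pins down $a=1$. As a remark, this shows that $G$ vanishes on $\overline{\mathcal D}_{2m}\times\overline{\mathcal D}_{2m}$ only on $\{(\zeta,\zeta):\zeta\in L\}\subset E$, consistent with Lemma \ref{tau}. This is what will allow the principal branch of $G^{1/m}$ to be used in the formula \eqref{diag_sgoker}.
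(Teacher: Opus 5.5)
Your proof is correct and is essentially the paper's argument. The paper splits into the cases $|z_1||z_2|<1$ and $|z_1||z_2|=1$; in the second case it uses $w_1=w_2=0$ and $\zeta\neq\eta$ to get $\cos\theta<1$. You run the same reasoning as a contrapositive equality analysis of $1-\operatorname{Re}(z_1\overline z_2)\ge 1-|z_1||z_2|\ge 0$.
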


\begin{proof}
Write $z_1\overline{z}_2 = |z_1||z_2| e^{i\theta}$ with $\theta\in[-\pi,\pi)$. Then
\[
\operatorname{Re}\bigl(G(\zeta,\eta)\bigr) = 1 - |z_1||z_2|\cos\theta.
\]

Since $\zeta,\eta\in\overline{\mathcal{D}}_{2m}$, we have $|z_1|\le 1$ and $|z_2|\le 1$. We will consider two cases.

\textbf{Case I:} $|z_1||z_2| < 1$. Then
\[
\operatorname{Re}\bigl(G(\zeta,\eta)\bigr) \ge 1 - |z_1||z_2| > 0.
\]

\medskip

\textbf{Case II:} $|z_1||z_2| = 1$. Because $|z_1|,|z_2|\le 1$, this forces $|z_1| = |z_2| = 1$. The definition of $\overline{\mathcal{D}}_{2m}$ then implies $|w_1|^{2m} \le 1-|z_1|^2 = 0$, hence $w_1 = 0$; similarly $w_2 = 0$. Thus $\zeta = (z_1,0)$ and $\eta = (z_2,0)$ lie on the boundary $\partial\mathcal{D}_{2m}$. The hypothesis $(\zeta,\eta)\notin E$ means $\zeta \neq \eta$, so $z_1 \neq z_2$. Consequently $e^{i\theta} = z_1\overline{z}_2 \neq 1$, and therefore $\cos\theta < 1$. It follows that
\[
\operatorname{Re}\bigl(G(\zeta,\eta)\bigr) = 1 - \cos\theta > 0.
\]

In both cases we obtain $\operatorname{Re}\bigl(G(\zeta,\eta)\bigr) > 0$, which completes the proof.
\end{proof}

\begin{corollary}\label{realana_(1-z_1barz_2)^alpha}
     For any $\alpha\in\mathbb{R}$, the function $(1-z_1\overline z_2)^\alpha$ is real-analytic on $U=(\overline{\mathcal D}_{2m}\times\overline{\mathcal D}_{2m})\setminus E$ where $E=\{(\zeta,\eta)\in\partial\mathcal{D}_{2m}\times\partial\mathcal{D}_{2m}:\zeta=\eta\}$ denotes the boundary diagonal.
\end{corollary}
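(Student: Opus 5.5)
The plan is to define $(1-z_1\overline z_2)^\alpha$ through the principal branch of the logarithm and to show that it is a composition of real-analytic maps on $U$. Concretely, set
\[
(1-z_1\overline z_2)^\alpha:=\exp\bigl(\alpha\,\operatorname{Log}(1-z_1\overline z_2)\bigr),
\]
where $\operatorname{Log}$ denotes the principal branch of the logarithm. This branch is holomorphic on the open right half-plane $\{\operatorname{Re}\lambda>0\}$, and in fact on $\mathbb C\setminus(-\infty,0]$.

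The argument then runs in three steps.
\begin{enumerate}
\item The map $G(\zeta,\eta)=1-z_1\overline z_2$ is a polynomial in the real coordinates $(\operatorname{Re}z_j,\operatorname{Im}z_j,\operatorname{Re}w_j,\operatorname{Im}w_j)$. Hence it is real-analytic on all of $\mathbb C^2\times\mathbb C^2$.
\item By the preceding lemma, $G$ maps $U$ into $H:=\{\lambda\in\mathbb C:\operatorname{Re}\lambda>0\}$. On $H$ the function $\lambda\mapsto\lambda^\alpha=e^{\alpha\operatorname{Log}\lambda}$ is holomorphic, and therefore real-analytic as a function of $(\operatorname{Re}\lambda,\operatorname{Im}\lambda)$.
\item A composition of real-analytic maps is real-analytic. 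So $\lambda^\alpha\circ G$ is real-analytic at every point of $U$.
\end{enumerate}

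The only delicate point is that $U$ is not open in $\mathbb C^4$, since it contains boundary points. Real-analyticity there has to be understood as extending to a real-analytic function on an open neighbourhood of each point. I will handle this by continuity and openness. The set $G^{-1}(H)$ is open in $\mathbb C^2\times\mathbb C^2$, and it contains $U$ by the lemma. On this open set the formula above is already real-analytic. Restricting it to $U$ therefore gives the claim.

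I do not expect a serious obstacle. The substantive input, namely that $\operatorname{Re}G>0$ on $U$ (including at boundary pairs off the diagonal), has already been established. What remains is to make sure the chosen branch is consistent, which the principal logarithm on the right half-plane guarantees.
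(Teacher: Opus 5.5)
Your proposal is correct and follows the paper's route: the paper states this corollary without a separate proof, as an immediate consequence of the preceding lemma that $\operatorname{Re}(1-z_1\overline z_2)>0$ on $U$. Composing the principal branch of $\lambda^\alpha$ on the right half-plane with the polynomial $G$ is exactly that deduction, and your observation that $G^{-1}(\{\operatorname{Re}\lambda>0\})$ is an open neighbourhood of $U$ properly handles the fact that $U$ itself is not open.
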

We finally prove Theorem \ref{feff-sgo-egg-doms}.

\begin{proof}[Proof of Theorem \ref{feff-sgo-egg-doms}]
    Up to a set of measure zero $\{(z,0):|z|=1\}$, the boundary $\partial\mathcal{D}_{2m}$ is $C^\infty$-smoothly parametrised by \[z=e^{i\theta}\sqrt{1-s^{2m}},\qquad\qquad w=se^{i\phi},\] with $s=|w|\in (0,1)$ and $\theta,\phi\in [-\pi,\pi)$.  
    
    The Euclidean surface measure is
    \[d\sigma_{\operatorname{E}}=s\sqrt{1-s^{2m}+m^2s^{4m-2}}\;d\theta\; ds\; d\varphi.\]
    Hence, the Fefferman surface area measure is
    \[d\sigma_{\operatorname{F}}=m^{\frac{2}{3}}\frac{s^{\frac{2(m-1)}{3}}}{\sqrt{1-s^{2m}+m^2s^{4m-2}}}d\sigma_{\operatorname{E}}=m^{\frac{2}{3}}s^{\frac{2m+1}{3}}ds\;d\theta\; d\varphi.\] 
    Note that the Fefferman-surface area measure is rotationally invariant. Consequently, the normalised monomials $z^j w^k/\|z^j w^k\|_{L^2_{\operatorname{F}}(\pa \mathcal{D}_{2m})}$ form a complete orthonormal basis since $\mathcal{D}_{2m}$ is a complete Reinhardt domain. So, consider
    \begin{multline*}
        \int_{\pa \mathcal{D}_{2m}}|z^j w^k|^2 d\sigma_{\operatorname{F}}=m^{\frac{2}{3}}\int_{-\pi}^{\pi}\int_{-\pi}^{\pi}\int_{0}^1 (1-s^{2m})^j s^{2k}s^{\frac{2m+1}{3}}\;ds\;d\theta \;d\phi\\
        =(2\pi)^2 m^{\frac{2}{3}}\int_{0}^1 (1-s^{2m})^j s^{2k+\frac{2m+1}{3}}ds.
    \end{multline*}
    Set $t=s^{2m}$ so $ds=\frac{1}{2m}t^{\frac{1}{2m}-1}dt$.
    Hence,
    \[\|z^j w^k\|_{L^2_F(\pa \mathcal{D}_{2m})}^2=\frac{(2\pi)^2 m^{\frac{2}{3}}}{2m}\int_{0}^1 (1-t)^j t^{\frac{3k-2m+2}{3m}}dt=\frac{2\pi^2}{m^{\frac{1}{3}}}B\Big(j+1,\frac{3k-2m+2}{3m}+1\Big),\] 
    The Beta function $B(\cdot,\cdot)$ is well-defined since $3k-2m+2>-3m$ for all nonnegative integers $k$ and positive integers $m$. 
    
   For every $\zeta=(z_1,w_1),~\eta=(z_2,w_2)\in \mathcal{D}_{2m}$, it follows from \eqref{series_exp} and the above calculations that
 \begin{multline*}
    S_{\mathcal{D}_{2m}}(\zeta,\eta)=\frac{m^{\frac{1}{3}}}{2\pi^2}\sum_{j,k\geq 0}\frac{(z_1 \ov{z}_2)^j (w_1 \ov{w}_2)^k}{B(j+1,\frac{3k+m+2}{3m})}\\
    =\frac{m^{\frac{1}{3}}}{2\pi^2}\sum_{k\geq 0}\frac{(w_1 \ov{w}_2)^k}{\Ga(\be_k)}\sum_{j\geq 0}\frac{\Ga(j+\be_k+1)}{j!}(z_1 \ov{z}_2)^j
\end{multline*}
where $\beta_k=\frac{3k+m+2}{3m}$. Note that 
\[\Ga(j+\be_k+1)=(\be_k+1)_j\Ga(\be_k+1),\] 
where $(x)_j$ is Pochhanmer. So, 
\[\sum_{j=0}^\infty \frac{\Ga(j+\be_k+1)}{j!}(z_1\ov z_2)^j=\Ga(\be_k+1)(1-z_1\ov z_2)^{-\be_k-1}.\] 
This implies that for every $\zeta=(z_1,w_1),~\eta=(z_2,w_2)\in \mathcal{D}_{2m}$, we have 
\begin{multline*}
    S_{\mathcal{D}_{2m}}(\zeta,\eta)=\frac{m^{\frac{1}{3}}}{2\pi^2}\sum_{k\geq 0}\frac{(w_1 \ov{w}_2)^k}{\Ga(\be_k)}\Ga(\be_k+1)(1-z_1\ov z_2)^{-\be_k-1}\\=\frac{m^{\frac{1}{3}}}{2\pi^2}(1-z_1\ov z_2)^{-\frac{4m+2}{3m}}\sum_{k\geq 0}\frac{3k+m+2}{3m}X^k
\end{multline*}
where $X=w_1\ov w_2/(1-z_1\ov z_2)^{\frac{1}{m}}$ with $|X|<1$ from (\ref{equat1}). 

Since 
\[\sum_{k\geq 0}X^k=\frac{1}{1-X}\quad \text{and}\quad X\sum_{k\geq 1}kX^{k-1}= \frac{X}{(1-X)^2},\] 
it follows that
\begin{equation*}
    \sum_{k\geq 0}\frac{3k+m+2}{3m}X^k=\frac{1}{3m}\frac{X(1-m)+(m+2)}{(1-X)^2}.
\end{equation*}
Therefore,
\begin{multline*}
   S_{\mathcal{D}_{2m}}(\zeta,\eta)=\frac{m^{\frac{1}{3}}}{6m\pi^2}(1-z_1\ov z_2)^{-\frac{4m+2}{3m}}\frac{m+2+(1-m)\frac{w_1\ov w_2}{(1-z_1\ov z_2)^{\frac{1}{m}}}}{\Big(1-\frac{w_1\ov w_2}{(1-z_1\ov z_2)^{1/m}}\Big)^2}\\
   =\frac{m^{\frac{1}{3}}}{2\pi^2}\frac{(m+2)(1-z_1 \ov z_2)^{\frac{1}{m}}+(1-m)w_1\ov w_2}{3m(1-z_1\ov z_2)^{\frac{4m-1}{3m}}((1-z_1 \ov z_2)^{\frac{1}{m}}-w_1\ov w_2)^2}.
\end{multline*}
Thus, the proof of Theorem \ref{feff-sgo-egg-doms} is completed.
\end{proof}
As a consequence, we have the following results.
\begin{proposition}\label{deno-feff-sgo-ker}
    For $m\in\mathbb{Z}^+$, let $\mathcal{D}_{2m}=\{(z,w)\in\mathbb{C}^2:|z|^2+|w|^{2m}<1\}$. The Fefferman--Szeg\H{o} kernel $S_{\mathcal{D}_{2m}} (\zeta,\eta)$ blows up exactly on the boundary diagonal $E=\{(\zeta,\eta)\in\partial\mathcal{D}_{2m}\times\partial\mathcal{D}_{2m}:\zeta=\eta\}$. 
\end{proposition}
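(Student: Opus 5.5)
We read the closed form \eqref{diag_sgoker} as a function on $\overline{\mathcal D}_{2m}\times\overline{\mathcal D}_{2m}$, with the principal branch of $(1-z_1\ov z_2)^{\alpha}$. The plan has two parts. First, the kernel is finite (indeed real-analytic) on $U=(\overline{\mathcal D}_{2m}\times\overline{\mathcal D}_{2m})\setminus E$. Second, it is unbounded as $(\zeta,\eta)$ approaches any point of $E$.

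For the first part, the only possible singularities come from zeros of $G=1-z_1\ov z_2$ or of
\[
H(\zeta,\eta):=(1-z_1\ov z_2)^{\frac1m}-w_1\ov w_2 .
\]
By the preceding lemma, $\operatorname{Re}G>0$ on $U$, so $G\neq0$. By Corollary \ref{realana_(1-z_1barz_2)^alpha}, every power $G^{\alpha}$ is real-analytic on $U$.

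It remains to show $H\neq0$ on $U$. The argument of Lemma \ref{tau} applies on the closure with non-strict inequalities: $|w_i|^{2m}\le 1-|z_i|^2$, combined with $(1-|z_1|^2)(1-|z_2|^2)\le|1-z_1\ov z_2|^2$, gives
\[
|w_1\ov w_2|\le|1-z_1\ov z_2|^{1/m}.
\]
Suppose $H=0$. Then equality holds throughout, so:
\begin{enumerate}
\item $|w_i|^{2m}=1-|z_i|^2$, i.e.\ $\zeta,\eta\in\partial\mathcal D_{2m}$;
\item $|z_1-z_2|=0$, i.e.\ $z_1=z_2=:z$;
\item $w_1\ov w_2=(1-|z|^2)^{1/m}=|w_1||w_2|$, since $1-|z|^2>0$ is real (here $|z|<1$, because $\operatorname{Re}G>0$ on $U$).
\end{enumerate}
From (i) and $z_1=z_2$ we get $|w_1|=|w_2|$, and (iii) then forces $w_1=w_2$. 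Hence $\zeta=\eta$ lies on $\partial\mathcal D_{2m}$, contradicting $(\zeta,\eta)\in U$. Therefore $H\neq0$ on $U$, and $S_{\mathcal D_{2m}}$ is real-analytic, in particular finite, there.

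For the second part, fix $\zeta_0=(z_0,w_0)\in\partial\mathcal D_{2m}$ and approach $(\zeta_0,\zeta_0)$ along the diagonal with $\zeta=\eta=r\zeta_0$, $r\to1^-$. Set $a=1-r^2|z_0|^2\ge|w_0|^{2m}$.

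In the case $w_0\neq0$, we have $|z_0|<1$, so $a\to|w_0|^{2m}>0$. The denominator factor
\[
H=a^{1/m}-r^2|w_0|^2 \longrightarrow |w_0|^2-|w_0|^2=0 .
\]
The numerator tends to
\[
(m+2)|w_0|^2+(1-m)|w_0|^2=3|w_0|^2\neq0,
\]
so $S\to+\infty$.

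In the case $w_0=0$, we have $|z_0|=1$, so $a=1-r^2\to0$. The kernel reduces to
\[
S=c\,\frac{m+2}{3m}\,a^{-\frac{4m+2}{3m}}\longrightarrow\infty .
\]

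Hence the kernel blows up exactly on $E$. The main delicacy is the equality-case analysis in the first part, specifically the matching of arguments in $w_1\ov w_2$. The blow-up in the second part is routine; one only checks that the numerator does not vanish at the limit.
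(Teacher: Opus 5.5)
Your proof is correct, and it differs from the paper's in two places.

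\textbf{Finiteness off $E$.} The paper does not use the lemma $\operatorname{Re}G>0$; it treats $1-z_1\ov z_2=0$ directly (its Case I). For $H=0$ it raises $(1-z_1\ov z_2)^{1/m}=w_1\ov w_2$ to the $m$-th power, obtaining the polynomial identity $z_1\ov z_2+w_1^m\ov w_2^{\,m}=1$. It then applies the equality case of Cauchy--Schwarz to $P_i=(z_i,w_i^m)$, and the branch of $G^{1/m}$ matters only at the very end, when passing from $w_1^m=w_2^m$ to $w_1=w_2$. You instead take the equality case of the chain of inequalities in Lemma \ref{tau} on the closure and match arguments at the end. Both work: yours reuses an existing lemma, while the paper's trick is shorter. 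One step in yours is implicit. To split equality in $|w_1||w_2|\le(1-|z_1|^2)^{\frac{1}{2m}}(1-|z_2|^2)^{\frac{1}{2m}}$ into your item (i), you need both factors positive. This follows from $|w_1\ov w_2|=|G|^{1/m}>0$, since $G\neq 0$ on $U$.

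\textbf{Blow-up on $E$.} Here your proposal is more complete than the paper. The paper reduces the statement to showing that the denominator vanishes exactly on $E$, and it proves only the ``only if'' half. That reduction is not sufficient at points $(\zeta_0,\zeta_0)$ with $\zeta_0=(e^{i\theta},0)$, where the numerator $(m+2)G^{1/m}+(1-m)w_1\ov w_2$ also vanishes. Your computation of the net exponent $-\frac{4m+2}{3m}$ along $\zeta=\eta=r\zeta_0$ resolves exactly this $0/0$ case.

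Your path argument gives unboundedness near each point of $E$. If you want $|S_{\mathcal D_{2m}}|\to\infty$ along every approach within $U$, the upgrade is short:
\begin{itemize}
\item Write $S_{\mathcal D_{2m}}=\frac{c_{\mathcal D_{2m}}}{3m}\,G^{-\frac{4m+2}{3m}}\,\frac{(m+2)+(1-m)X}{(1-X)^2}$ with $X=w_1\ov w_2/G^{1/m}$.
\item The closure version of Lemma \ref{tau} gives $|X|\le 1$.
\item Near the circle $\{w=0\}$, the bounds $|(m+2)+(1-m)X|\ge 3$ and $|1-X|^2\le 4$ then give $|S_{\mathcal D_{2m}}|\gtrsim |G|^{-\frac{4m+2}{3m}}\to\infty$.
\item Near points with $w_0\neq 0$, continuity of the numerator, which tends to $3|w_0|^2\neq 0$, suffices.
\end{itemize}
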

\begin{proof}
It suffices to show that for $\zeta,\eta\in\overline{\mathcal{D}}_{2m}$, \[(1-z_1\overline{z}_2)^{\frac{4m-1}{3m}}((1-z_1\overline{z}_2)^{\frac{1}{m}}-w_1\overline{w}_2)^2\] vanishes if and only if $\zeta=(z_1,w_1),\eta=(z_2,w_2)\in E$. 

\medskip

Thus, we consider the two cases.

\textbf{Case I:} If $1-z_1\bar{z}_2=0$, then $|z_1||z_2|=1$. Since \(|z_i|\le1\) we have $|z_1|=|z_2|=1$. The domain condition forces $w_i=0$, thus $\zeta=(z_1,0)$, $\eta=(z_2,0)$ and $\zeta,\eta\in\partial \mathcal{D}_{2m}$.
In addition, \(z_1\bar{z}_2=1\) gives \(z_1=z_2\). Hence \(\zeta=\eta\in\partial \mathcal{D}_{2m}\).

\medskip

\textbf{Case II:} If \((1-z_1\bar{z}_2)^{\frac{1}{m}}=w_1\bar{w}_2\), then $1=z_1\bar{z}_2+w_1^m\bar{w}_2^m$. Set \(P_1=(z_1,w_1^m), P_2=(z_2,w_2^m)\); we have \(P_1\cdot P_2=1\). By the Cauchy--Schwarz inequality,
\[
1=|P_1\cdot P_2|\le|P_1||P_2|\le 1,
\]
so equality holds. Thus \(|P_1|=|P_2|=1\), hence \(\zeta,\eta\in\partial \mathcal{D}_{2m}\) and \(P_1=\lambda P_2\) with \(\lambda>0\). Hence, \(\lambda=1\), which implies \(z_1=z_2\) and \(w_1^m=w_2^m\). The boundary condition together with \(w_1\bar{w}_2=(1-|z_1|^2)^{1/m}\ge0\) forces \(w_1=w_2\). Consequently, \(\zeta=\eta\in\partial \mathcal{D}_{2m}\).

In both cases the product vanishes only when $\zeta,\eta\in E$, which completes the proof.
\end{proof}

\begin{corollary}\label{Sgoker_ext_holo}
     For $m\in\mathbb{Z}^+$, let $\mathcal{D}_{2m}=\{(z,w)\in\mathbb{C}^2:|z|^2+|w|^{2m}<1\}$. For each $\eta\in \mathcal{D}_{2m}$, the Fefferman--Szeg\H o kernel  $S_{\mathcal{D}_{2m}}(\zeta,\eta)$ extends holomorphically to a neighbourhood of $\overline{\mathcal{D}}_{2m}$.
\end{corollary}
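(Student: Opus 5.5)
Fix $\eta=(z_2,w_2)\in\mathcal D_{2m}$ and regard the closed formula \eqref{diag_sgoker} of Theorem \ref{feff-sgo-egg-doms} as a function of $\zeta=(z_1,w_1)$. It is holomorphic in $\zeta$ wherever a holomorphic branch of $(1-z_1\ov z_2)^{1/m}$ and of $(1-z_1\ov z_2)^{(4m-1)/(3m)}$ exists, and the denominator does not vanish. So the plan is to find an open neighbourhood $V_\eta\supset\overline{\mathcal D}_{2m}$ on which both conditions hold. The right-hand side then defines a holomorphic function on $V_\eta$ that agrees with $S_{\mathcal D_{2m}}(\cdot,\eta)$ on $\mathcal D_{2m}$, and this is the desired extension.

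\textbf{Step 1 (branch).} Since $\eta$ is interior, $|z_2|<1$. On the open set $W_1=\{\zeta\in\mathbb C^2:|z_1|<1/|z_2|\}$ (all of $\mathbb C^2$ if $z_2=0$) we have $|z_1\ov z_2|<1$, so $\operatorname{Re}(1-z_1\ov z_2)>0$. Moreover $\overline{\mathcal D}_{2m}\subset\{|z_1|\le1\}\subset W_1$. On $W_1$ we take the principal branch of $(1-z_1\ov z_2)^\alpha$ for every real $\alpha$; this branch is holomorphic in $z_1$ and never vanishes. It coincides with the branch used in the series computation, namely the binomial series $\sum_j(\beta_k+1)_j(z_1\ov z_2)^j/j!$, because both are analytic and agree at $z_1=0$. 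Thus the factor $(1-z_1\ov z_2)^{(4m-1)/(3m)}$ is harmless.

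\textbf{Step 2 (denominator).} It remains to show that $h(\zeta):=(1-z_1\ov z_2)^{1/m}-w_1\ov w_2$ has no zero on $\overline{\mathcal D}_{2m}$. This is where the argument needs care, since Lemma \ref{tau} is stated only for interior points. I would run the argument of Lemma \ref{tau} with $\zeta\in\overline{\mathcal D}_{2m}$ and $\eta$ interior. We have $|w_1|^{2m}\le1-|z_1|^2$ and $|w_2|^{2m}<1-|z_2|^2$, with the second inequality strict. Combined with $(1-|z_1|^2)(1-|z_2|^2)\le|1-z_1\ov z_2|^2$, this gives $|w_1\ov w_2|<|1-z_1\ov z_2|^{1/m}$, and hence $h(\zeta)\neq0$. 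There is one degenerate case: $w_2=0$ with $|z_1|=1$. There $w_1\ov w_2=0$ and $|1-z_1\ov z_2|\ge1-|z_2|>0$, so $h\neq0$ directly.

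Alternatively, Step 2 follows from Proposition \ref{deno-feff-sgo-ker}: since $\eta\notin\partial\mathcal D_{2m}$, the pair $(\zeta,\eta)$ is never in $E$. One must still check that the principal branch is the one used there, and it is.

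\textbf{Step 3 (openness).} The function $h$ is continuous on $W_1$ and nonvanishing on the compact set $\overline{\mathcal D}_{2m}$. Hence $|h|\ge c>0$ on $\overline{\mathcal D}_{2m}$, and $h\neq0$ on some open neighbourhood $V_\eta\subset W_1$ of $\overline{\mathcal D}_{2m}$. On $V_\eta$ the numerator is holomorphic and the denominator is holomorphic and nonvanishing. So the formula \eqref{diag_sgoker} is holomorphic in $\zeta$ on $V_\eta$ and restricts to $S_{\mathcal D_{2m}}(\cdot,\eta)$ on $\mathcal D_{2m}$.

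The main obstacle is Step 2. The strict inequality must survive when $\zeta$ is on the boundary, so it has to come from $\eta$ being interior. The branch consistency in Step 1 is routine by comparison.
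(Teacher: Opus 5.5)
Your proposal is correct and follows the paper's route. The paper's proof is just the closed formula of Theorem \ref{feff-sgo-egg-doms} together with Proposition \ref{deno-feff-sgo-ker} (your ``alternative'' Step 2), and your branch bookkeeping and compactness step make explicit what the paper leaves implicit. One small slip in your rerun of Lemma \ref{tau}: the strict inequality fails exactly when $|z_1|=1$ (which forces $w_1=0$), for any $w_2$, not only when $w_2=0$; your direct bound $|1-z_1\overline{z}_2|\ge 1-|z_2|>0$ covers that case verbatim.
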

\begin{proof}
    This follows from the explicit formula of $S_{\mathcal{D}_{2m}}(\zeta,\eta)$ derived in Theorem \ref{feff-sgo-egg-doms} and Proposition \ref{deno-feff-sgo-ker}.
\end{proof}
\begin{corollary}\label{density}
    The space of holomorphic functions on $\overline{\mathcal{D}}_{2m}$ is dense in the Fefferman-Hardy space $\operatorname{H}^2_{\operatorname{F}}(\partial \mathcal{D}_{2m})$.
\end{corollary}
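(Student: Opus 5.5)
The plan is a standard reproducing-kernel argument: show that the orthogonal complement in $\operatorname{H}^2_{\operatorname{F}}(\partial\mathcal D_{2m})$ of the holomorphic functions on $\overline{\mathcal D}_{2m}$ is trivial. Let $\mathcal A$ be the set of (restrictions of) functions holomorphic on some neighbourhood of $\overline{\mathcal D}_{2m}$.

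\emph{Step 1: $\mathcal A\subset\operatorname{H}^2_{\operatorname{F}}(\partial\mathcal D_{2m})$.} A function $f\in\mathcal A$ is bounded on $\overline{\mathcal D}_{2m}$. Hence $\mathcal N(f)\le\sup_{\overline{\mathcal D}_{2m}}|f|$, and $\mathcal N(f)\in L^2_{\varpi_{\operatorname F}}$ because $\varpi_{\operatorname F}$ is integrable on $\partial\mathcal D_{2m}$ (the integral $I_1<\infty$ from Case II in the proof of Theorem \ref{feff_density_Ap}). Also, $f^*=f|_{\partial\mathcal D_{2m}}$ by continuity.

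\emph{Step 2: the kernel functions lie in $\mathcal A$.} Fix $\eta\in\mathcal D_{2m}$. By Corollary \ref{Sgoker_ext_holo}, the function $S_{\mathcal D_{2m}}(\cdot,\eta)$ extends holomorphically to a neighbourhood of $\overline{\mathcal D}_{2m}$, so it belongs to $\mathcal A$. If one wants to be careful about the branch of $(1-z_1\bar z_2)^{1/m}$, note that for fixed $\eta$ in the interior we have $|z_2|<1$. Then $\operatorname{Re}(1-z_1\bar z_2)>0$ on a neighbourhood of $\{|z_1|\le1\}$, so the principal branch is holomorphic there. Moreover, by Proposition \ref{deno-feff-sgo-ker} the denominator has no zeros on $\overline{\mathcal D}_{2m}$, hence by compactness none on a neighbourhood.

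\emph{Step 3: orthogonality argument.} Suppose $g\in\operatorname{H}^2_{\operatorname{F}}(\partial\mathcal D_{2m})$ is orthogonal to $\mathcal A$. Then in particular
\[
0=\langle g,S_{\mathcal D_{2m}}(\cdot,\eta)\rangle=\int_{\partial\mathcal D_{2m}}g^*(\xi)\,\overline{S_{\mathcal D_{2m}}(\xi,\eta)}\,d\sigma_{\operatorname F}(\xi)=g(\eta)
\]
for every $\eta\in\mathcal D_{2m}$. This uses the reproducing property from Proposition \ref{H2reproducing} together with the Hermitian symmetry $\overline{S(\xi,\eta)}=S(\eta,\xi)$. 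So $g\equiv0$, hence $\mathcal A^{\perp}=\{0\}$ and $\overline{\mathcal A}=\operatorname{H}^2_{\operatorname{F}}(\partial\mathcal D_{2m})$.

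An alternative route is also available: the polynomials, which lie in $\mathcal A$, already span a dense subspace, since the normalized monomials form a complete orthonormal basis, as used in the proof of Theorem \ref{feff-sgo-egg-doms}.

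The only genuinely delicate point is Step 2, namely checking that the extension is holomorphic on an open neighbourhood and not merely continuous up to the boundary. This is handled by the branch and nonvanishing remarks there.
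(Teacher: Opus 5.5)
Your proposal is correct and is the same argument the paper gives in its one-line proof, namely that the density follows from Corollary \ref{Sgoker_ext_holo}: you make explicit the standard facts that functions holomorphic near $\overline{\mathcal D}_{2m}$ lie in $\operatorname{H}^2_{\operatorname{F}}(\partial\mathcal D_{2m})$, and that any element orthogonal to all kernel functions $S_{\mathcal D_{2m}}(\cdot,\eta)$ vanishes by the reproducing property. Your branch and nonvanishing checks in Step 2 correctly supply details the paper leaves implicit.
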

\begin{proof}
    This is the consequence of Corollary \ref{Sgoker_ext_holo}. 
\end{proof}
\begin{corollary}\label{m=1-Feff-sgoker-ball}
    For $m=1$, we have
    \[S_{\mathbb B^2}(\zeta,\eta)=\frac{1}{2\pi^2}\frac{1}{(1-z_1\ov z_2-w_1\ov w_2)^2.}\]
\end{corollary}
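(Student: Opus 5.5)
This is a direct specialization of the closed formula in Theorem \ref{feff-sgo-egg-doms}, so I would simply substitute $m=1$ into \eqref{diag_sgoker} and simplify. With $m=1$, the constant becomes $c_{\mathcal D_2}=1^{1/3}/2\pi^2=1/2\pi^2$, and the factor $3m$ in the denominator becomes $3$. In the numerator, $(m+2)(1-z_1\ov z_2)^{1/m}+(1-m)w_1\ov w_2$ reduces to $3(1-z_1\ov z_2)$, because the term $(1-m)w_1\ov w_2$ vanishes. In the denominator, the exponent $\frac{4m-1}{3m}$ equals $1$, and $\bigl((1-z_1\ov z_2)^{1/m}-w_1\ov w_2\bigr)^2$ becomes $(1-z_1\ov z_2-w_1\ov w_2)^2$.

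Putting these together gives
\[
S_{\mathbb B^2}(\zeta,\eta)=\frac{1}{2\pi^2}\cdot\frac{3(1-z_1\ov z_2)}{3(1-z_1\ov z_2)(1-z_1\ov z_2-w_1\ov w_2)^2}.
\]
The factor $3(1-z_1\ov z_2)$ cancels, and this cancellation is legitimate: by Lemma \ref{tau} (or simply $|z_1|,|z_2|<1$), $1-z_1\ov z_2\neq 0$ on $\mathcal D_2\times\mathcal D_2$. Here $\mathcal D_2=\mathbb B^2$. Since the exponents are now integers, no branch issues arise. This yields the stated formula.

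As a sanity check, I would compare with the classical Szeg\H o kernel of $\mathbb B^2$. On $\partial\mathbb B^2$ the Levi determinant is $1$ and $|\nabla\rho|=|\zeta|=1$ for $\rho=|z|^2+|w|^2-1$, so the Fefferman weight from the computation in the proof of Theorem \ref{feff-sgo-egg-doms} is identically $1$, i.e. $d\sigma_{\operatorname F}=d\sigma_{\operatorname E}$. Since $|\partial\mathbb B^2|=2\pi^2$, the kernel must be $\frac{1}{2\pi^2}(1-\langle\zeta,\eta\rangle)^{-2}$, which matches. There is no real obstacle here; the only point needing care is justifying the cancellation of $(1-z_1\ov z_2)$, which is handled above.
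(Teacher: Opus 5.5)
Your proposal is correct and follows the paper's route: the paper states this as an immediate consequence of Theorem~\ref{feff-sgo-egg-doms}, obtained by setting $m=1$ in \eqref{diag_sgoker}, where $c_{\mathcal D_2}=1/2\pi^2$, the numerator becomes $3(1-z_1\ov z_2)$, the exponent $\frac{4m-1}{3m}$ equals $1$, and the nonvanishing factor $3(1-z_1\ov z_2)$ cancels. Your extra check is a valid independent confirmation, though not required: $d\sigma_{\operatorname F}=d\sigma_{\operatorname E}$ on $\partial\mathbb B^2$ with total area $2\pi^2$, which recovers the classical kernel $\frac{1}{2\pi^2}(1-z_1\ov z_2-w_1\ov w_2)^{-2}$.
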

\begin{proposition}\label{bdry-asym-sgo}
For $m\in\mathbb Z^+$, let $\mathcal{D}_{2m}=\{(z,w)\in\mathbb{C}^2~: ~\rho(z,w)=|z|^2+|w|^{2m}-1<0\}$. Then for every $\xi\in\partial \mathcal{D}_{2m}$, we have
\begin{equation}\label{lim}
   \lim_{\substack{(z,w)\to \xi \\ w\neq 0}}\frac{\rho(z,w)^{2}\,S_{\mathcal{D}_{2m}}(z,w)}{\Big(\det(\rho_{i\bar j}(z,w))\Big)^{2/3}} = \frac{1}{2\pi^{2}}.
\end{equation}
\end{proposition}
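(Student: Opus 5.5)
The plan is to reduce the quotient in \eqref{lim} to a function of a single scale-invariant variable, and then to show that this variable tends to $1$ as $(z,w)\to\xi$.

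\textbf{Reduction.} Set $\zeta=\eta=(z,w)$ in the closed formula of Theorem \ref{feff-sgo-egg-doms}. Write
\[
a:=1-|z|^2>0,\qquad x:=\frac{|w|^2}{a^{1/m}}.
\]
By Lemma \ref{tau}, $x\in[0,1)$ on $\mathcal D_{2m}$, and $x>0$ when $w\neq0$. Theorem \ref{feff-sgo-egg-doms} then gives
\[
S_{\mathcal D_{2m}}(z,w)=\frac{m^{1/3}}{2\pi^2}\,\frac{(m+2)+(1-m)x}{3m\,a^{\frac{4m+2}{3m}}(1-x)^2}.
\]
The other two ingredients are
\[
\rho(z,w)=|w|^{2m}-a=-a(1-x^m),\qquad \det(\rho_{i\bar j})^{2/3}=m^{4/3}|w|^{\frac{4m-4}{3}}=m^{4/3}x^{\frac{2m-2}{3}}a^{\frac{2m-2}{3m}}.
\]
In the quotient the total power of $a$ is
\[
2-\frac{4m+2}{3m}-\frac{2m-2}{3m}=0,
\]
so the quotient is exactly
\[
f(x)=\frac{1}{2\pi^2}\cdot\frac{1}{3m^2}\cdot\Big(\frac{1-x^m}{1-x}\Big)^2\,\frac{(m+2)+(1-m)x}{x^{\frac{2m-2}{3}}}.
\]
This is a routine algebraic step. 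Since $(1-x^m)/(1-x)\to m$ as $x\to1^-$, we get
\[
f(x)\to\frac{1}{2\pi^2}\cdot\frac{1}{3m^2}\cdot m^2\cdot 3=\frac{1}{2\pi^2}.
\]
Hence \eqref{lim} is equivalent to showing $x(z,w)\to1$ as $(z,w)\to\xi$ with $w\neq0$. For $m=1$ there is nothing to check: $f\equiv 1/2\pi^2$, which is consistent with Corollary \ref{m=1-Feff-sgoker-ball}.

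\textbf{Points off the degenerate circle.} Take $\xi=(z_0,w_0)\in\partial\mathcal D_{2m}$ with $w_0\neq0$. Then $a\to 1-|z_0|^2=|w_0|^{2m}>0$ and $|w|^2\to|w_0|^2$. Therefore $x\to |w_0|^2/|w_0|^2=1$, and continuity of $f$ at $1$ gives the limit $1/2\pi^2$.

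\textbf{The circle $L$ (main obstacle).} The hard case is $\xi\in L=\{(e^{i\theta},0)\}$, where both $a\to0$ and $|w|\to0$, so $x$ is an indeterminate ratio. Along approach paths on which $|w|^{2m}/(1-|z|^2)\to1$ (approach tangential to $\partial\mathcal D_{2m}$ from inside), $x\to1$ and the limit holds as above.

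The difficulty is that for $m>1$ one can also approach with $x\equiv x_0\in(0,1)$ fixed, for example $|w|^2=x_0a^{1/m}$ with $a\to0$. These points lie in $\mathcal D_{2m}$ because $x_0^m<1$. Along such a path the quotient equals $f(x_0)$. One checks that $f$ is not constant, since $f(x)\to\infty$ as $x\to0^+$. So I would first try to verify the claim along such curves. I expect this verification to fail, which means the limit at $\xi\in L$ has to be understood along approaches with $x\to1$, and the final write-up should make this restriction explicit.

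For the write-up I would therefore do two things. First, prove the statement at every $\xi\notin L$ by the computation above. Second, at $\xi\in L$, state precisely the approach regions on which $x\to1$, and point out that the unrestricted limit at $\xi\in L$ fails for $m>1$, with the explicit curves $x\equiv x_0$ as a counterexample. This would also agree with the non-constancy of $SK_{\mathcal D_{2m}}$ mentioned in the introduction.
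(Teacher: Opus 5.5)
Your exact reduction is correct: the power of $a$ cancels, the quotient is exactly $f(x)$ with $x=\tau(z,w)=|w|^2/(1-|z|^2)^{1/m}$, and $f(1^-)=\frac{1}{2\pi^2}$. This is a different and sharper route than the paper's. The paper sets $u=1-|z|^2$, $v=|w|^2$ and expands each factor of $S_{\mathcal D_{2m}}$ in powers of $-\rho$ with $v>0$ held fixed, for example $u^{1/m}-v=\frac1m v^{1-m}(-\rho)+O(\rho^2)$. From this it obtains $\rho^2S_{\mathcal D_{2m}}\sim\frac{m^{4/3}}{2\pi^2}|w|^{4(m-1)/3}$ and then asserts the limit at every $\xi\in\partial\mathcal D_{2m}$. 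Your identity contains that computation as the single fact $f(1^-)=\frac{1}{2\pi^2}$. It also shows exactly when the paper's expansion is legitimate.

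Your counterexample is valid, so drop the hedge ``I expect this verification to fail''. For $m>1$ the proposition as stated is false at every point of $L$, and the paper's proof has a gap there.

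\textbf{Where the paper's expansion breaks.} The paper's first expansion is really
$u^{1/m}-v=\frac1m v^{1-m}|\rho|\bigl(1+O(|\rho|/v^{m})\bigr)$, and $|\rho|/|w|^{2m}=x^{-m}-1$. So it is justified only when $\rho/|w|^{2m}\to0$, i.e.\ $x\to1$. This is automatic for $\xi\notin L$. It is not automatic for $\xi\in L$, where $v\to0$ together with $\rho$.

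\textbf{Why the limit fails on $L$.} The quotient is exactly $f(\tau)$. Every level set $\{\tau=x_0\}$ with $0<x_0<1$ accumulates at every point of $L$. Hence each value $f(x_0)$ is a cluster value of the quotient at each $\xi\in L$. Concretely, take $z=\sqrt{1-a}\,e^{i\theta}$ and $|w|^2=a^{2/m}$ with $a\to0^+$. Then $x=a^{1/m}\to0$, and the quotient tends to $+\infty$ when $m>1$. This is the same mechanism behind the non-constancy of $SK_{\mathcal D_{2m}}$, which is also a function of $\tau$ alone; compare Proposition~\ref{SK-lim}(ii), which uses the opposite regime $|w|^{2m}=o(|\rho|)$ at $L$.

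The write-up you propose is the correct form of the result:
\begin{itemize}
\item the unrestricted limit for $\xi\notin L$, and for $m=1$;
\item at $\xi\in L$, the limit only along approaches with $\rho/|w|^{2m}\to0$;
\item the explicit curves $\tau\equiv x_0$, or $x\to0$, as counterexamples to the unrestricted claim.
\end{itemize}
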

\begin{proof}
Set $u=1-|z|^2$, $v=|w|^2$ so that \(-\rho=u-v^m\). Assume $v>0$ (i.e. $w\neq0)$, and expand the kernel for sufficiently small $-\rho$. As $-\rho \to 0^+$, we have the following asymptotic expansions:
\begin{align*}
    u^{\frac{1}{m}}-v &= \frac{1}{m}v^{1-m}(-\rho)+O(\rho^2),\\
    u^{\frac{4m-1}{3m}}&=v^{\frac{4m-1}{3}}+O(-\rho),\\
    (m+2)u^{\frac{1}{m}}+(1-m)v&=3v+\frac{m+2}{m}v^{1-m}(-\rho)+O(\rho^2).
\end{align*}
Substituting these asymptotic expansions into the kernel yields:
\[
S_{\mathcal{D}_{2m}}(z,w)=\frac{m^{\frac{1}{3}}}{2\pi^2}\,
\frac{3v+\frac{m+2}{m}v^{1-m}(-\rho)+O(\rho^2)}
{3m\bigl(v^{\frac{4m-1}{3}}+O(-\rho)\bigr)\bigl(\frac{1}{m}v^{1-m}(-\rho)+O(\rho^2)\bigr)^2}.
\]
The leading term in the denominator is
\[
 \frac{3}{m}v^{\frac{4m-1}{3}+2-2m}\rho^2.
\]
Therefore
\[
\rho^2S_{\mathcal{D}_{2m}} \sim \frac{m^{\frac{4}{3}}}{2\pi^2}\,|w|^{\frac{4(m-1)}{3}},
\]
where the symbol $\sim$ means the ratio tends to $1$ as $\rho\to0^+$.

Next, note that
\[
\bigl(\det(\rho_{i\bar j})\bigr)^{\frac{2}{3}}= m^{\frac{4}{3}}|w|^{\frac{4(m-1)}{3}}.
\]
This yields, for any boundary point $\xi\in\partial \mathcal{D}_{2m}$, the limit over points $(z,w)\to \xi$ with $w\neq 0$ exists and equals $\frac{1}{2\pi^2}$.
\end{proof}
\begin{remark}
    The point of this result is that this well known formula in the strongly
pseudoconvex case extends to the points where $\det (\rho_{i \overline j})$ vanishes. We refer to \cite[P.~261]{a78} for the analogous Bergman kernel result.
\end{remark}
\subsection{A natural biholomorphic invariant}
We now recall the diagonal values of the Bergman kernel of $\mathcal{D}_{2m}$ as follows
\begin{equation}\label{diag_bergker}
    K_{\mathcal{D}_{2m}}(z,w)=
\frac{(m+1)(1-|z|^2)^{\frac{1}{m}}-(m-1)|w|^2}
{m\pi^2\Big((1-|z|^2)^{\frac{1}{m}}-|w|^2\Big)^3(1-|z|^2)^{\frac{2m-1}{m}}}
\end{equation}
Using $K_{\mathcal{D}_{2m}}$ and $S_{\mathcal{D}_{2m}}$, we define the biholomorphic invariant as follows
\[
SK_{\mathcal{D}_{2m}}(z,w)=\frac{S_{\mathcal D_{2m}}(z,w)^{3}}{K_{\mathcal{D}_{2m}}(z,w)^2}\qquad\text{ for all }(z,w)\in\mathcal{D}_{2m}.
\]
Indeed, if a biholomorphism $F:\mathcal{D}_{2m}\to \Omega$ satisfies Definition \ref{biholo_cond}, then
\[
SK_{\Omega}(F(z),F(w))=SK_{\mathcal{D}_{2m}}(z,w)\qquad\text{ for all }(z,w)\in\mathcal{D}_{2m}.
\]
For each $(z,w)\in\mathcal{D}_{2m}$, define $\tau(z,w) := |w|^2/(1-|z|^2)^{\frac{1}{m}}$. Upon substituting  \eqref{diag_bergker} and \eqref{diag_sgoker} into $SK_{\mathcal{D}_{2m}}$, a straightforward simplification yields
\begin{equation}\label{SK:cptform}
   SK_{\mathcal{D}_{2m}}(z,w)=\frac{1}{6^3\pi^2}\cdot\frac{\big((m+2)-(m-1)\tau(z,w)\big)^3}{ \big((m+1)-(m-1)\tau(z,w)\big)^2}.
\end{equation}

\begin{proposition}\label{SK-lim}
For $m\in\mathbb{Z}^+$, let $\mathcal{D}_{2m}=\{(z,w)\in\mathbb C^2: |z|^2+|w|^{2m}<1\}$. We have
\begin{itemize}
    \item [(i)] If $\xi=(0,e^{i\theta})\in\partial \mathcal{D}_{2m}$ with $\theta\in(-\pi,\pi]$, then
\[
\lim_{(z,w)\to \xi} SK_{\mathcal{D}_{2m}}(z,w)=\frac{1}{32\pi^2}.
\]
\item [(ii)] If $\xi=(e^{i\theta},0)\in\partial \mathcal{D}_{2m}$ with $\theta\in(-\pi,\pi]$, then 
    \[
    \lim_{\substack{(z,w)\to \xi \\ |w|^{2m}=o(1-|z|^2-|w|^{2m})}}SK_{\mathcal{D}_{2m}}(z,w)=\frac{1}{6^3\pi^2}\frac{(m+2)^3}{(m+1)^2}.
    \]
\end{itemize}
\end{proposition}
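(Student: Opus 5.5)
The plan is to work entirely from the compact formula \eqref{SK:cptform}, which expresses $SK_{\mathcal D_{2m}}$ as a function of the single quantity $\tau(z,w)=|w|^2/(1-|z|^2)^{1/m}$. Set
\[
\Psi(\tau):=\frac{1}{6^3\pi^2}\cdot\frac{\big((m+2)-(m-1)\tau\big)^3}{\big((m+1)-(m-1)\tau\big)^2},
\]
so that $SK_{\mathcal D_{2m}}(z,w)=\Psi(\tau(z,w))$. On $\mathcal D_{2m}$ we have $|w|^{2m}<1-|z|^2$, hence $0\le\tau<1$. For $\tau\in[0,1]$ the denominator satisfies $(m+1)-(m-1)\tau\ge 2>0$. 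Therefore $\Psi$ is continuous on $[0,1]$, and both statements reduce to computing the limit of $\tau$ along the prescribed approach to $\xi$.

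\textbf{Part (i).} Let $(z,w)\to\xi=(0,e^{i\theta})$. Then $z\to0$, so $(1-|z|^2)^{1/m}\to1$, and $|w|^2\to1$; hence $\tau(z,w)\to1$. By continuity of $\Psi$,
\[
SK_{\mathcal D_{2m}}(z,w)\to\Psi(1)=\frac{1}{6^3\pi^2}\cdot\frac{3^3}{2^2}=\frac{27}{864\pi^2}=\frac{1}{32\pi^2}.
\]
This value is independent of $m$, as expected, since near such $\xi$ the boundary is strongly pseudoconvex.

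\textbf{Part (ii).} Let $(z,w)\to\xi=(e^{i\theta},0)$ and write $u=1-|z|^2$ and $-\rho=u-|w|^{2m}>0$. Then
\[
\tau^m=\frac{|w|^{2m}}{u}=\frac{|w|^{2m}}{-\rho+|w|^{2m}}=\frac{|w|^{2m}/(-\rho)}{1+|w|^{2m}/(-\rho)}.
\]
Under the hypothesis $|w|^{2m}=o(-\rho)$, the right-hand side tends to $0$, so $\tau\to0$. Continuity of $\Psi$ then gives the limit
\[
\Psi(0)=\frac{1}{6^3\pi^2}\cdot\frac{(m+2)^3}{(m+1)^2}.
\]

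There is no substantial obstacle; the only care needed is the justification of \eqref{SK:cptform} itself. That formula comes from substituting the diagonal values \eqref{diag_sgoker} and \eqref{diag_bergker} and observing that all powers of $(1-|z|^2)$ cancel: the Szeg\H{o} part contributes total exponent $-3(4m+2)/m$ and the Bergman part contributes $+2(3/m+(2m-1)/m)$, and these sum to zero.

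I would also remark that without the restriction in (ii), $\tau$ can tend to any value in $[0,1]$ along suitable paths to $(e^{i\theta},0)$. Since $\Psi$ is nonconstant for $m>1$, this shows the unrestricted limit does not exist, which explains the tangential-type condition in the statement.
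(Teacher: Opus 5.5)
Your proposal is correct and follows the paper's proof: reduce via \eqref{SK:cptform} to the limit of $\tau$, which is $1$ in case (i) and $0$ in case (ii). Your computation $\tau^m=\frac{x}{1+x}$ with $x=|w|^{2m}/(-\rho)$ is a slightly cleaner version of the paper's $1-|z|^2\sim|\rho|$ step.

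One slip in your side remark: the exponents you quote do not cancel as written. In fact $S^3$ carries $(1-|z|^2)^{-(4m+2)/m}$, not $-3(4m+2)/m$. $K^{-2}$ carries $(1-|z|^2)^{+2(2m+1)/m}$ once the numerator factor $(1-|z|^2)^{1/m}$ of $K$ is included. Since \eqref{SK:cptform} itself is correct, nothing in the argument changes.
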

\begin{proof}
In view of \eqref{SK:cptform}, it suffices to determine the limiting behavior of $\tau(z,w)=|w|^2/(1-|z|^2)^{\frac{1}{m}}$
in each case.

For (i), as $(z,w)\to(0,e^{i\theta})$, we have $|z|\to 0$ and $|w|\to 1$. Hence $\tau(z,w)\to 1$. Substituting $\tau=1$ into \eqref{SK:cptform} gives
\[
\frac{1}{6^3\pi^2}\cdot \frac{3^3}{2^2}=\frac{1}{32\pi^2}.
\]

For (ii), recall that $|\rho(z,w)|=1-|z|^2-|w|^{2m}>0$. The condition $|w|^{2m}=o\big(|\rho|\big)$ implies that $1-|z|^2=|\rho|+|w|^{2m}\sim |\rho|$ as $\rho\to 0$. Therefore,
\[
\tau(z,w)=\frac{|w|^2}{(1-|z|^2)^{\frac{1}{m}}}
\sim\frac{o(|\rho|^{\frac{1}{m}})}{|\rho|^{\frac{1}{m}}}\to 0.
\]
Substituting $\tau=0$ into \eqref{SK:cptform} yields the desired constant.

\end{proof}
\begin{remark}\label{SK-rem}
The two limits appearing in the preceding corollary coincide precisely when \(m=1\). In this exceptional case the domain is the unit ball \(\mathbb{B}^{2}\), and the invariant \(SK_{\mathbb{B}^{2}}\) is identically constant; see \cite{bl14}.
\end{remark}
\begin{proposition}\label{SK:bdd}
For $m\in\mathbb Z^+$, let $\mathcal{D}_{2m}=\{(z,w)\in\mathbb C^2: |z|^2+|w|^{2m}<1\}$. The function $SK_{\mathcal{D}_{2m}}$ is uniformly bounded above and below on $\mathcal{D}_{2m}$.
\end{proposition}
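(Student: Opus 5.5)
The plan is to reduce everything to the one-variable rational function in the compact formula \eqref{SK:cptform}. Set
\[
f_m(\tau):=\frac{\big((m+2)-(m-1)\tau\big)^3}{\big((m+1)-(m-1)\tau\big)^2},
\]
so that $SK_{\mathcal D_{2m}}(z,w)=\frac{1}{6^3\pi^2}f_m\big(\tau(z,w)\big)$. We then bound $f_m$ on the range of $\tau$. For $m=1$ the function $f_1\equiv 27$ is constant, and there is nothing to prove; this is consistent with Remark \ref{SK-rem}. So assume $m\ge 2$.

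\textbf{Step 1 (range of $\tau$).} For $(z,w)\in\mathcal D_{2m}$ we have $|w|^{2m}<1-|z|^2$, hence $|w|^2<(1-|z|^2)^{1/m}$. Therefore $0\le\tau(z,w)<1$. Alternatively, this is the diagonal case $\zeta=\eta$ of Lemma \ref{tau}. Thus $\tau$ maps $\mathcal D_{2m}$ into $[0,1)$.

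\textbf{Step 2 (numerator and denominator on $[0,1]$).} For $\tau\in[0,1]$ and $m\ge2$, both factors of $f_m$ are affine in $\tau$ with negative slope $-(m-1)$:
\begin{itemize}
\item $(m+2)-(m-1)\tau\in[3,m+2]$;
\item $(m+1)-(m-1)\tau\in[2,m+1]$.
\end{itemize}
Both are therefore bounded away from $0$ and from $\infty$ uniformly on the closed interval $[0,1]$. Consequently $f_m$ is continuous and strictly positive on the compact set $[0,1]$, and we obtain the explicit bounds
\[
\frac{27}{(m+1)^2}\le f_m(\tau)\le \frac{(m+2)^3}{4}.
\]
Alternatively, one can take the min and max of $f_m$ over $[0,1]$.

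\textbf{Step 3 (conclusion).} Combining Steps 1 and 2 with \eqref{SK:cptform} gives uniform constants $0<c_m\le C_m<\infty$ such that $c_m\le SK_{\mathcal D_{2m}}\le C_m$ on $\mathcal D_{2m}$.

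The only real point is verifying \eqref{SK:cptform} itself. Strictly, the simplification should be checked: substituting \eqref{diag_sgoker} at $\zeta=\eta$ and \eqref{diag_bergker}, the powers of $(1-|z|^2)$ and of $\big((1-|z|^2)^{1/m}-|w|^2\big)$ must cancel in $S^3/K^2$. For the latter factor the exponents are $-6$ from $S^3$ against $+6$ from $K^{-2}$. For the power of $(1-|z|^2)$, we get $-\frac{4m-1}{m}$ from $S^3$, plus $\frac{2(2m-1)}{m}$ from $K^{-2}$, plus $\frac{3}{m}-\frac{2}{m}$ after factoring $(1-|z|^2)^{1/m}$ out of the numerators, which sums to $0$.

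Since the paper states \eqref{SK:cptform} as already established, we take it as given. The expected obstacle is therefore minimal: it is just ensuring that the denominator $(m+1)-(m-1)\tau$ never vanishes, which holds because $\tau<1<\frac{m+1}{m-1}$.
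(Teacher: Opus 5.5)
Your proof is correct and takes the same route as the paper: you reduce via \eqref{SK:cptform} to a one-variable function of $\tau\in[0,1)$ and bound it on $[0,1]$. The paper instead uses strict monotonicity of $f$ for $m>1$ to obtain the sharp bounds $\frac{1}{32\pi^2}<SK_{\mathcal D_{2m}}\le\frac{(m+2)^3}{6^3\pi^2(m+1)^2}$, whereas your factorwise estimates give cruder constants; these suffice for the statement, and your check of \eqref{SK:cptform} is correct. One trivial slip: $f_1\equiv 27/4$, not $27$, which gives $SK_{\mathbb B^2}\equiv\frac{1}{32\pi^2}$, and your general bounds already cover $m=1$.
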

\begin{proof}
We consider a continuous function
\[f(x)=\frac{1}{6^3\pi^2}\,\frac{\bigl((m+2)+(1-m)x\bigr)^3}{\bigl((m+1)-(m-1)x\bigr)^2},\qquad\text{ for all }x\in [0,1].\]
 For $m=1$, it is clear that $f\equiv 1/(32\pi^2)$. So we only need to focus on $m> 1$.

For $m>1$, note that $f$ is a strictly decreasing function.   Therefore, $f$ cannot be identically constant. Also,
$$
\sup_{x\in [0,1]}f(x)=f(0)=\frac{1}{6^3\pi^2}\frac{(m+2)^3}{(m+1)^2},
$$
and
$$
\inf_{x\in [0,1]} f(x)=f(1)=\frac{1}{32\pi^2}.
$$
Consequently,
\[\frac{1}{32\pi^2}<SK_{\mathcal{D}_{2m}}(z,w)\le\frac{1}{6^3\pi^2}\frac{(m+2)^3}{(m+1)^2},\]
$\text{ for all }(z,w)\in\mathcal{D}_{2m}$. Thus, the upper bound of $SK_{\mathcal{D}_{2m}}$ is attained on the set $\{w=0\}$ (where $\tau=0$) and the lower bound is strict for $m>1$ (approached as $(z,w)\to(0,e^{i\theta})$). hence, the proof is completed.
\end{proof}
\section{Rigidity of the Fefferman--Szeg\H o metric}
 In this section, we compare three such metrics on the egg domain
\[
\mathcal D_{2m}:=\{(z,w)\in\mathbb C^2: |z|^2+|w|^{2m}<1\},\qquad m\in\mathbb Z^+.
\]
These are: the Fefferman--Szeg\H o metric \(g_{\operatorname{FS}}^{\mathcal D_{2m}}\) associated with the Fefferman surface area measure, the classical Bergman metric \(g_{\operatorname{B}}^{\mathcal D_{2m}}\), and the complete K\"ahler metric \(g_m^{\mathcal D_{2m}}\) constructed from the alternative defining function
\[
\rho_m(z,w):=(1-|z|^2)^{\frac{1}{m}}-|w|^2
\]
by setting \(g_m^{\mathcal D_{2m}}:=-\partial_i\partial_{\bar j}\log \rho_m\). The metric \(g_m^{\mathcal D_{2m}}\) was introduced by Seo \cite{Seo12} and later studied by Sha \cite{sha26}; it is natural to compare it with the Fefferman--Szeg\H o metric.

The main result of this section shows that all K\"ahler--Einstein rigidity phenomena and proportionality relations among these metrics occur exactly in the case \(m=1\), i.e., when \(\mathcal D_{2m}\) is the unit ball $\mathbb B^2$.

\begin{proof}[Proof of Theorem \ref{main:egg}]
    We prove that each of the four statements is equivalent to \(m=1\).

    \medskip
    \noindent\textbf{(i) \(\Longleftrightarrow\) (iv).}
    By Proposition \ref{FS-KE}, the Fefferman--Szeg\H o metric is K\"ahler--Einstein if and only if \(m=1\).

    \medskip
    \noindent\textbf{(ii) \(\Longleftrightarrow\) (iv).}
    If \(m=1\), then \(\mathcal D_{2m}\) is the unit ball, and the Fefferman--Szeg\H o and Bergman metrics coincide up to a constant factor (see Remark \ref{SK-rem}). Conversely, if \(g_{\operatorname{FS}}^{\mathcal D_{2m}}=\lambda g_{\operatorname{B}}^{\mathcal D_{2m}}\) for some \(\lambda>0\), then Proposition \ref{FS-gB} forces \(m=1\).

    \medskip
    \noindent\textbf{(iii) \(\Longleftrightarrow\) (iv).}
    If \(m=1\), then \(\mathcal D_{2m}\) is the unit ball, and the Fefferman--Szeg\H o and \(g_m\) metrics coincide up to a constant factor. Conversely, if \(g_{\operatorname{FS}}^{\mathcal D_{2m}}=\lambda g_m^{\mathcal D_{2m}}\) for some \(\lambda>0\), then Proposition \ref{FS-gm} forces \(m=1\).

    \medskip
    Since all three statements are equivalent to (iv), they are equivalent to each other. This completes the proof. 
\end{proof}
It remains to establish the propositions used in the proof above. We first record the explicit expressions for the Fefferman--Szeg\H o metric and its Ricci curvature on the distinguished slice \(\{z=0\}\). This is sufficient because every point of \(\mathcal D_{2m}\) can be moved to this slice by an automorphism. 

Indeed, the group of holomorphic automorphisms \(\operatorname{Aut}(\mathcal D_{2m})\) contains the maps
\begin{equation}\label{auto_eggdoms}
    F(z,w)=\left(\frac{z-z_0}{1-\bar z_0 z},\;
\mu\,\frac{(1-|z_0|^2)^{\frac{1}{2m}}}{(1-\bar z_0 z)^{\frac{1}{m}}}\,w\right),\qquad\qquad
\mu=\begin{cases}
\dfrac{|w_0|}{w_0},& w_0\neq0,\\
1,& w_0=0,
\end{cases}
\end{equation}
which send any point \((z_0,w_0)\) to \((0, |w_0|(1-|z_0|^2)^{-\frac{1}{2m}})\). Since the Fefferman--Szeg\H o metric is invariant under such biholomorphisms (see Definition \ref{biholo_cond}), it suffices to evaluate at points of the form \((0,w)\).
\subsection{The Fefferman--Szeg\H o metric and its Ricci curvature on $\mathcal{D}_{2m}$}

\begin{proof}[Proof of Theorem \ref{FSmetric-Ricci}]
   Throughout the proof, $r$ denotes the constant $r = (m-1)/(m+2)$ defined in the theorem statement. Set
\begin{equation*}
   a = |z|^2,\quad v = |w|^2,\quad I = (1-a)^{\frac{1}{m}},\quad M= I- v,\quad N = (m+2)I- (m-1)v.
\end{equation*} 
The diagonal values of $S_{\mathcal{D}_{2m}}$ is given by
    \[
  S_{\mathcal D_{2m}}(z,w) = c\,\frac{N}{I^{\frac{4m-1}{3}} M^2},
    \]
    where \(c>0\) is a constant, so we define
    \begin{equation}\label{Psi-av}
         \Psi(a,v):=\log  S_{\mathcal D_{2m}}(z,w)
    = \log c + \log N - \frac{4m-1}{3}\log I - 2\log M.
    \end{equation}

    \medskip
    
    \noindent\textbf{The \((1,\bar 1)\)-component of $g_{\operatorname{FS}}^{\mathcal{D}_{2m}}$.}
 For a function of $a,v,~\partial_{z\overline{z}}|_{a=0}=\partial_{a}|_{a=0}$. Differentiating \eqref{Psi-av} with respect to $a$ and evaluating at $a=0$ yields
    
    \begin{align}\label{a-derPsi-0}
    \left.\partial_a \Psi\right|_{a=0}
    &= \frac{1-r}{1+2r}\left( -\frac{1}{1-rv} + \frac{1+3r}{1-r} + \frac{2}{1-v} \right).
    \end{align}
    To simplify \eqref{a-derPsi-0}, define
    \[
    t := \frac{1-v}{1-rv}.
    \]
 Using \(\frac{1}{1-rv} = \frac{t}{1-v}\) and \(1-v = \frac{t(1-r)}{1-rt}\), \eqref{a-derPsi-0} simplifies to
    
    \[
g_{1\bar 1}(0,w)
    = \left.\partial_a \Psi\right|_{a=0}
    = \frac{2 + rt + rt^2}{t(1+2r)}
    = \frac{\alpha(t)}{t(1+2r)}
    \]
    with \(\alpha(t):=2+rt+rt^2\).

    \medskip
    
    \noindent\textbf{The \((2,\bar 2)\)-component of $g_{\operatorname{FS}}^{\mathcal{D}_{2m}}$.} At \(a=0\), we have
    \[
    g_{2\bar 2}(0,w) = \left.\partial_v \Psi\right|_{a=0} + v\left.\partial_{vv} \Psi\right|_{a=0}.
    \]
    From \eqref{Psi-av}, 
    \begin{equation*}
        \left.\partial_v \Psi\right|_{a=0}
    =  -\frac{r}{1-rv} + \frac{2}{1-v}\qquad \left.\partial_{vv} \Psi\right|_{a=0}
    =-\frac{r^2}{(1-rv)^2} + \frac{2}{(1-v)^2}. 
    \end{equation*}
   Thus, using \(\frac{1}{1-rv}=\frac{t}{1-v}\), gives
    \begin{align}\label{wbarw-feff-sgo}
    g_{2\bar 2}(0,w)
    &= \frac{2-rt}{1-v} + \frac{v(2-r^2t^2)}{(1-v)^2}. 
    \end{align}
  Using $ 1-v = \frac{t(1-r)}{1-rt}\text{ and } v = \frac{1-t}{1-rt}$,
     \eqref{wbarw-feff-sgo} becomes
    \begin{align}\label{final-wbarw-feff-sgo}
    g_{2\bar 2}(0,w)
    &= \frac{(2-rt)(1-rt)}{t(1-r)}
       + \frac{(1-t)(1-rt)(2-r^2t^2)}{t^2(1-r)^2}.
    \end{align}
    Factoring \(\frac{1-rt}{t^2(1-r)^2}\) and using the identity 
    \[t(2-rt)(1-r)+(1-t)(2-r^2t^2)=(1-rt)(2-rt^2),\]
    we obtain
\[g_{2\bar 2}(0,w)
    = \frac{(1-rt)^2(2-rt^2)}{t^2(1-r)^2}
    = \beta(t)\frac{(1-rt)^2}{t^2(1-r)^2}
    \]
    with \(\beta(t):=2-rt^2\).

    \medskip
    
    \noindent\textbf{The mixed components of $g_{\operatorname{FS}}^{\mathcal{D}_{2m}}$.}
    Finally, since $ \left.\partial_{z\bar w}\right|_{z=0} = \bar z w\,\partial_{av}^2|_{z=0} =0,$ we have
    \[
    g_{1\bar 2}(0,w) = \left.\partial_{z\bar w}\log S_{\mathcal D_{2m}}\right|_{z=0} = 0.
    \] This implies $ g_{2\bar 1}(0,w)=0$.
    Thus, the proof of the metric formula is completed.

    \medskip

    We finally compute the Ricci curvature of $g_{\operatorname{FS}}^{\mathcal{D}_{2m}}$. 

    \noindent\textbf{Derivatives of $\Psi$ at $a=0$.}
 For the Ricci tensor computation, we require the following derivatives of $\Psi$ evaluated at $a=0$:
    \begin{equation}\label{Der-Psi}
    \begin{array}{lll}
    \Psi_a=\dfrac{\alpha(t)}{t(1+2r)}, &
    \Psi_v=\dfrac{(2-rt)(1-rt)}{t(1-r)}, &
    \Psi_v+v\Psi_{vv}=\dfrac{\beta(t)(1-rt)^2}{t^2(1-r)^2},\\[8pt]
    \Psi_{av}=\dfrac{(1-rt)^2(2-rt^2)}{(2r+1)(1-r)t^2}, &
    \Psi_{avv}=\dfrac{2(1-rt)^3(2-r^2t^3)}{(2r+1)(1-r)^2t^3}, &
    \Psi_{aa}=\dfrac{A(t)+2\alpha(t)}{(2r+1)^2t^2},\\[8pt]
    \Psi_{vvv}=\dfrac{2(1-rt)^3(2-r^3t^3)}{t^3(1-r)^3}, &
    \Psi_{vvvv}=\dfrac{6(1-rt)^4(2-r^4t^4)}{t^4(1-r)^4}.
    \end{array}  \end{equation}

    Here and below, $\Psi_a:=\partial_a\Psi|_{a=0}$, $\Psi_{aa}:=\partial_{a}^2\Psi|_{a=0}$ and so on.

    \medskip
    \noindent\textbf{The $(1,\bar 1)$-component of the Ricci curvature.}
   From \eqref{Ric-ibarj}, since the metric is diagonal,
   \[\operatorname{Ric}_{1\bar 1}(0,w)=\sum_{k=1}^2g^{k\bar k}(0,w)R_{1\bar 1 k\bar k}(0,w).\]  Using \eqref{Der-Psi} into \eqref{Ric-ibarj}, we find
    \[
    R_{1\bar 11\bar 1}(0,w)=-2\Psi_{aa},
    \]
    and
    \begin{equation}\label{R_1bar12bar2}
        R_{1\bar 12\bar 2}(0,w)
    =-\Psi_{av}-v\Psi_{avv}+v(\Psi_{av})^2\frac{1}{\Psi_a}.
    \end{equation}
    Hence
    \begin{equation}\label{zbarz-Ric}
        \operatorname{Ric}_{1\bar1}(0,w)
    =-2\frac{\Psi_{aa}}{\Psi_a}
    +\frac{1}{\Psi_v+v\Psi_{vv}}
    \left(-\Psi_{av}-v\Psi_{avv}+v(\Psi_{av})^2\frac{1}{\Psi_a}\right).
    \end{equation}
    
    Substituting \eqref{Der-Psi} into \eqref{zbarz-Ric}, the first term gives
    \[
    -2\frac{\Psi_{aa}}{\Psi_a}
    =
    -\frac{2A(t)+4\alpha(t)}{t\alpha(t)(1+2r)},
    \]
    where $A(t)=-r^{2}t^{4}+(3r^{2}+2r)t^{3}+2r^{2}t^{2}-2$. 
    
    For the second term, we substitute explicitly:
    \[
    \begin{aligned}
    &\frac{1}{\Psi_v+v\Psi_{vv}}
    \left(-\Psi_{av}-v\Psi_{avv}+v(\Psi_{av})^2\frac{1}{\Psi_a}\right) \\
    &= \frac{t^2(1-r)^2}{(1-rt)^2(2-rt^2)}
    \left[
    -\frac{(1-rt)^2(2-rt^2)}{(2r+1)(1-r)t^2}
    -\frac{2(1-t)(1-rt)^2(2-r^2t^3)}{(2r+1)(1-r)^2t^3}
    \right. \\
    &\qquad\qquad\qquad\qquad\qquad
    \left.
    +\frac{(1-t)(1-rt)^3(2-rt^2)^2}{(1-r)^2 t^3 \alpha(t) (1+2r)}
    \right].
    \end{aligned}
    \]
    Putting the bracket over the common denominator
    \[
    (2r+1)(1-r)^2 t^3 \alpha(t) (1+2r),
    \]
    and simplifying the numerator, we obtain
    \[
    \frac{1}{\Psi_v+v\Psi_{vv}}
    \left(-\Psi_{av}-v\Psi_{avv}+v(\Psi_{av})^2\frac{1}{\Psi_a}\right)
    =
    \frac{B(t)}{(2r+1)t(2-rt^2)}
    +\frac{C(t)}{t\alpha(t)(1+2r)},
    \]
    where $ B(t)=-2r^{2}t^{4}+(r+r^{2})t^{3}+2(1+r)t-4\text{ and }
    C(t)=(1-t)(1-rt)(2-rt^2)$. Therefore,
    \begin{multline}\label{1bar1-Ricci}
         \operatorname{Ric}_{1\bar1}(0,w)
    =
    -\frac{2A(t)+4\alpha(t)}{t\alpha(t)(1+2r)}
    +\frac{B(t)}{(2r+1)t(2-rt^2)}
    \\+\frac{C(t)}{t\alpha(t)(1+2r)}
=:I+II+III. 
    \end{multline}

    \medskip
    
  \noindent\textbf{The $(2,\bar 2)$-component of the Ricci curvature.}  
From \eqref{Ric-ibarj}, since the metric is diagonal,
\[
\operatorname{Ric}_{2\bar 2}(0,w) = \sum_{k=1}^2 g^{k\bar k}(0,w) R_{2\bar 2 k\bar k}(0,w).
\]

For this component, we note that \(R_{2\bar 2 1\bar 1}(0,w)=R_{1\bar 1 2\bar 2}(0,w)\) and, 
\[
R_{2\bar 22\bar 2}(0,w)
=-g_{2\bar 22\bar 2}(0,w)+g_{22\bar2}(0,w)\,g^{2\bar2}(0,w)\,g_{2\bar2\bar2}(0,w).
\]

A direct computation using \eqref{Psi-av} gives
\[
g_{2\bar 22\bar 2}(0,w)=2\Psi_{vv}+4v\Psi_{vvv}+v^2\Psi_{vvvv},
\]
and
\[
g_{22\bar2}(0,w)\,g^{2\bar2}(0,w)\,g_{2\bar2\bar2}(0,w)
=\frac{1}{\Psi_v+v\Psi_{vv}}
\Bigl(4v(\Psi_{vv})^2+4v^2\Psi_{vv}\Psi_{vvv}+v^3(\Psi_{vvv})^2\Bigr).
\]

Substituting \eqref{Der-Psi} into \(\operatorname{Ric}_{2\bar2}(0,w)\) and simplifying yields
\begin{equation}\label{Ric-2bar2}
    \operatorname{Ric}_{2\bar2}(0,w)=D(t)+E(t)+F(t),
\end{equation}
where
\[
D(t)=
\frac{-2t^2(1-r)^2(2-r^2t^2)-8t(1-r)(1-t)(2-r^3t^3)-6(1-t)^2(2-r^4t^4)}
{t^2(1-r)^2(2-rt^2)}
\]
is the contribution from \(-g^{2\bar2}(0,w)g_{2\bar 22\bar2}(0,w)\),
\[
E(t)=
\frac{4(1-t)(1-rt)(2-r^2t^3)^2}
{t^2(1-r)^2(2-rt^2)^2}
\]
is the contribution from \(g^{2\bar2}(0,w)g_{22\bar2}(0,w)g^{2\bar2}(0,w)g_{2\bar2\bar2}(0,w)\), and
\[
F(t)=
-\frac{(1-rt)^2(2-rt^2)}{(1-r)t\alpha(t)}
-\frac{2(1-t)(1-rt)^2(2-r^2t^3)}
{(1-r)^2t^2\alpha(t)}
+\frac{(1-t)(1-rt)^3(2-rt^2)^2}
{(1-r)^2t^2\alpha(t)^2}
\]
is the contribution from \(g^{1\bar1}(0,w)R_{2\bar21\bar1}(0,w)\) using \eqref{R_1bar12bar2}.

\medskip
\noindent\textbf{The mixed components of the Ricci curvature.} 
It remains to verify that the off-diagonal Ricci components vanish. 
From \eqref{Ric-ibarj}, since the metric is diagonal,
\[
\operatorname{Ric}_{1\bar 2}(0,w) = g^{1\bar1}(0,w)R_{1\bar 2 1\bar1}(0,w) + g^{2\bar2}(0,w)R_{1\bar 2 2\bar2}(0,w).
\]
Using \eqref{Ric-ibarj}, the required curvature components are
\[
R_{1\bar 2 1\bar 1}
= -g_{1\bar 2 1\bar 1} + g_{1\bar 1 1}g^{1\bar1}g_{1\bar 2\bar1}
+ g_{1\bar 2 1}g^{2\bar2}g_{2\bar 2\bar1},
\]
and
\[
R_{1\bar 2 2\bar 2}
= -g_{1\bar 2 2\bar 2} + g_{1\bar 1 2}g^{1\bar1}g_{1\bar 2\bar2}
+ g_{1\bar 2 2}g^{2\bar2}g_{2\bar 2\bar2}.
\]
At \(a=0\), direct differentiation gives the following vanishing derivatives:
\[
g_{1\bar1 1}=0,\quad
g_{1\bar2 1}=0,\quad g_{1\bar2 2}=0,\quad
g_{1\bar2\bar2}=0,\quad g_{1\bar2 1\bar1}=0,\quad g_{1\bar2 2\bar2}=0.
\]
For instance, \(g_{1\bar1 1}=\partial_z g_{1\bar1} = 2\bar z \Psi_{aa} + \bar z a \Psi_{aaa}=0\) at \(a=0\); the remaining identities follow similarly. 
Substituting these into the expressions for \(R_{1\bar 2 1\bar1}\) and \(R_{1\bar 2 2\bar2}\) yields
\[
R_{1\bar 2 1\bar1}=0,\qquad R_{1\bar 2 2\bar2}=0.
\]
Hence \(\operatorname{Ric}_{1\bar 2}(0,w)=0\). By symmetry of the Ricci tensor, \(\operatorname{Ric}_{2\bar 1}(0,w)=0\). Thus the Ricci curvature is fully diagonal.    
\end{proof}

Throughout the remainder of this Section, unless otherwise stated, we use the notation of Lemma \ref{FSmetric-Ricci}.
\begin{proposition}\label{FS-KE}
    The Fefferman Szeg\H o metric $g_{\operatorname{FS}}^{\mathcal{D}_{2m}}$ is K\"ahler-Einstein if and only if $m=1$.
\end{proposition}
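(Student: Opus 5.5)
The plan is to test the Einstein condition $\operatorname{Ric}_{\operatorname{FS}}^{\mathcal D_{2m}}=\lambda g_{\operatorname{FS}}^{\mathcal D_{2m}}$ only on the slice $\{z=0\}$, using the explicit formulas of Theorem \ref{FSmetric-Ricci}. By the automorphisms \eqref{auto_eggdoms} and the invariance of $g_{\operatorname{FS}}$ under (T)-biholomorphisms, the slice is enough. On the slice both the metric and the Ricci tensor are diagonal, so being Kähler--Einstein is equivalent to
\[
\frac{\operatorname{Ric}_{1\bar1}(0,w)}{g_{1\bar1}(0,w)}=\frac{\operatorname{Ric}_{2\bar2}(0,w)}{g_{2\bar2}(0,w)}=\lambda
\]
for all $t\in(0,1]$, where $r=(m-1)/(m+2)\in[0,1)$. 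It therefore suffices to exhibit one value of $t$ at which these ratios are not both equal to one fixed constant, unless $r=0$.

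\textbf{Step 1: the case $m=1$.} Here $r=0$ and $\mathcal D_2=\mathbb B^2$. By Corollary \ref{m=1-Feff-sgoker-ball}, $S(z)=c(1-|z|^2)^{-2}$, so $g_{\operatorname{FS}}=2\,\partial\bar\partial(-\log(1-|z|^2))$ is a constant multiple of the Bergman metric of the ball. That metric is Kähler--Einstein with $\operatorname{Ric}=-\tfrac32 g_{\operatorname{FS}}$. One can also read this off directly from Theorem \ref{FSmetric-Ricci} with $r=0$.

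\textbf{Step 2: pin down $\lambda$.} Evaluate at $t=1$, i.e. $w=0$, where many terms vanish because of the factors $(1-t)$. There $\alpha(1)=2+2r$ and $A(1)=4r^2+2r-2$. Hence the term $I$ equals $-2$, the term $II=(r-1)/(2r+1)$ since $B(1)=-(r-1)(r-2)$, and $III=0$. Combined with $g_{1\bar1}(0,0)=(2+2r)/(1+2r)$, this gives $\operatorname{Ric}_{1\bar1}/g_{1\bar1}=-3/2$ for every $r$. So $\lambda$ would have to be $-\tfrac32$. A quick check of the leading terms as $t\to0$ gives the same ratio, so the $(1,\bar1)$ component alone cannot detect $m$; that is the main subtlety.

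\textbf{Step 3: the $(2,\bar2)$ component at $t=1$.} Here $g_{2\bar2}=2-r$ and $E(1)=0$. Also $D(1)=-2(2-r^2)/(2-r)$, and only the first summand of $F$ survives, giving $F(1)=-(1-r)(2-r)/(2(1+r))$. Thus
\[
\frac{\operatorname{Ric}_{2\bar2}}{g_{2\bar2}}+\frac32=-\frac{2(2-r^2)}{(2-r)^2}-\frac{1-r}{2(1+r)}+\frac32=-\frac{2r^2(5-4r)}{2(1+r)(2-r)^2},
\]
after putting everything over the denominator $2(1+r)(2-r)^2$. The numerator expands to $10r^2-8r^3$. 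Since $r\in[0,1)$, this vanishes only for $r=0$, i.e. $m=1$. Hence for $m>1$ the two eigen-ratios at $(0,0)$ differ, so the metric is not Kähler--Einstein.

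The main obstacle is the algebra in Step 3: correctly evaluating $D,E,F$ at $t=1$ and carrying out the polynomial simplification. I would double-check this by specializing to $m=2$ ($r=1/4$), where the ratio is about $-1.565\neq-1.5$.
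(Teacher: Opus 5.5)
Your proof is correct. The values you read off Theorem \ref{FSmetric-Ricci} at $t=1$ all check out: $I=-2$, $II=(r-1)/(2r+1)$, $III=0$, $D(1)=-2(2-r^2)/(2-r)$, $E(1)=0$, $F(1)=-(1-r)(2-r)/\bigl(2(1+r)\bigr)$. Hence at the origin $\operatorname{Ric}_{1\bar1}/g_{1\bar1}=-\tfrac32$ for every $r$, while $\operatorname{Ric}_{2\bar2}/g_{2\bar2}+\tfrac32=-r^2(5-4r)/\bigl((1+r)(2-r)^2\bigr)$.

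Your route differs from the paper's only in how $\lambda$ is pinned down. The paper uses the $(2,\bar2)$ component alone. It extracts $\lambda=-\tfrac32$ from the leading $t^{-2}$ terms of $D$, $E$, $F$ and $g_{2\bar2}$ as $t\to0$ (i.e.\ $|w|\to1$), then evaluates the same component at $t=1$, reaching the same equation $r^2(5-4r)=0$. You instead compare the two eigen-ratios at the single point $(0,0)$.

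Your version is more economical. It needs no boundary asymptotics, which the paper only sketches as a routine calculation, and no automorphism reduction. At the origin all odd-order derivatives of $\Psi(|z|^2,|w|^2)$ vanish, so only second-order Taylor coefficients of $\Psi$ in $(a,v)$ enter, and you do not need the full formulas of Theorem \ref{FSmetric-Ricci}. It also shows that $\operatorname{Ric}$ fails to be proportional to $g_{\operatorname{FS}}$ even at one point. The paper's route, in turn, produces the boundary value $-\tfrac32$ that it reuses in Proposition \ref{coro1}.

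One small caveat: your aside that the $(1,\bar1)$ component alone cannot detect $m$ is overstated. That ratio equals $-\tfrac32$ at $t=1$ and in the limit $t\to0$, but not identically: for $m=2$, $t=\tfrac12$ the paper's formulas give about $-1.504$. The argument never uses this aside, so nothing is affected.
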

\begin{proof}
  If \(m=1\), then \(\mathcal D_2=\mathbb B^2\), and the Fefferman--Szeg\H o metric coincides (up to a constant factor) with the Bergman metric, which is well known to be K\"ahler--Einstein on unit ball. 
      Conversely, suppose \(g_{\operatorname{FS}}^{\mathcal D_{2m}}\) is K\"ahler--Einstein. Then there exists a constant \(\lambda\in\mathbb R\) such that
    \[
    \operatorname{Ric}_{2\bar2}(0,w)=\lambda\,g_{2\bar2}(0,w) \tag{KE}
    \]
    for all \(|w|<1\). We will evaluate this identity in two limiting regimes.

\medskip

\noindent\textbf{Boundary asymptotics as $t\to0$ (i.e, $|w|\to 1$):} From Lemma \ref{FSmetric-Ricci}, the metric $(2,\bar 2)$-component has the expansion
\begin{equation}\label{g22}
    g_{2\bar2}(0,w)=\frac{(1-tr)^2(2-rt^2)}{t^2(1-r)^2}=\frac{2}{(1-r)^2}\frac{1}{t^2}+O(t^{-1})\quad\text{as}\quad t\to 0.
\end{equation}
This implies that we can only focus on the term $t^{-2}$. A routine calculation yields the following asymptotic expansions
\begin{align*}
        D(t)&=-\frac{6}{(1-r)^2}\frac{1}{t^2}+O(t^{-1}),\\
        E(t)&=\frac{4}{(1-r)^2}\frac{1}{t^2}+O(t^{-1}),\\
        F(t)&=-\frac{1}{(1-r)^2}\frac{1}{t^2}+O(t^{-1}).
\end{align*}
From \eqref{Ric-2bar2}, we have 
\begin{align}\label{add}
  \operatorname{Ric}_{2\bar2}(0,w)=-\frac{3}{(1-r)^2}\frac{1}{t^2}+O(t^{-1}).
\end{align}
Hence, combining  (\ref{g22}) and (\ref{add}):
\begin{align}\label{ratio-Ricci-feffsgo-metric}
    \lim_{\substack{t\to 0 \\ {\text{i.e., }|w|\to 1}}}\frac{\operatorname{Ric}_{2\bar2}(0,w)}{g_{2\bar2}(0,w)}=\lim_{t\to 0}\frac{-\frac{3}{(1-r)^2}\frac{1}{t^2}+O(t^{-1})}{\frac{2}{(1-r)^2}\frac{1}{t^2}+O(t^{-1})}=-\frac{3}{2}.
\end{align}
This implies that $\lambda=-3/2.$

\medskip

\noindent\textbf{Evaluation at $t=1$ (i.e., $w=0$):} Substituting $t=1$ into (\ref{Ric-2bar2}) and (\ref{g22}), we have
\begin{align*}
    \operatorname{Ric}_{2\bar2}(0,0)=D(1)+E(1)+F(1)
    &=-\frac{2(2-r^2)}{2-r}-\frac{(1-r)(2-r)}{2(1+r)}
\end{align*}
and 
$$g_{2\bar2}(0,0)=\frac{(2-r)(1-r)^2}{(1-r)^2}=2-r.$$
Combining the K\"ahler--Einstein condition (KE) with \(\lambda=-3/2\) yields
$$\operatorname{Ric}_{2\bar 2}(0,0)=-\frac{3}{2}g_{2\bar 2}(0,0),$$
that is
$$r^2(5-4r)=0.$$
Since \(0\le r<1\), this forces \(r=0\), which is equivalent to \(m=1\). This completes the proof.
\end{proof}

\begin{proposition}\label{coro1}
    The Fefferman-Szeg\H{o} metric $g_{\operatorname{FS}}^{\mathcal{D}_{2m}}$ is negatively pinched near every strongly pseudoconvex boundary point, i.e.,
    \begin{equation}
        \lim_{\substack{t\to 0 \\ {\text{i.e., }|w|\to 1}}}\operatorname{Ric}_{\operatorname{FS}}^{\mathcal{D}_{2m}}((0,w),X)=-\frac{3}{2}\quad \text{for all}\quad X\in\mathbb{C}^2\setminus \left\{0\right\},
    \end{equation}
\end{proposition}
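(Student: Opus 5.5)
Since the metric and the Ricci tensor at $(0,w)$ are both diagonal (Theorem \ref{FSmetric-Ricci}), I interpret $\operatorname{Ric}_{\operatorname{FS}}^{\mathcal D_{2m}}((0,w),X)$ as the normalized Ricci curvature $\operatorname{Ric}(X,\bar X)/g_{\operatorname{FS}}(X,\bar X)$. For $X=(X_1,X_2)\neq 0$ this is
\[
\frac{\operatorname{Ric}_{1\bar1}|X_1|^2+\operatorname{Ric}_{2\bar2}|X_2|^2}{g_{1\bar1}|X_1|^2+g_{2\bar2}|X_2|^2},
\]
a weighted average of the two ratios $\operatorname{Ric}_{1\bar1}/g_{1\bar1}$ and $\operatorname{Ric}_{2\bar2}/g_{2\bar2}$, with nonnegative weights summing to one. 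So it suffices to show that both ratios tend to $-3/2$ as $t\to0$. The convergence is then uniform in $X$, because a convex combination of two quantities each within $\varepsilon$ of $-3/2$ is itself within $\varepsilon$ of $-3/2$. Points near other strongly pseudoconvex boundary points reduce to this slice via the automorphisms \eqref{auto_eggdoms} together with (T)-invariance. Note that $|w|\to1$ on $\{z=0\}$ corresponds to approaching $(0,e^{i\theta})$, which is strongly pseudoconvex.

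For the $(2,\bar2)$ ratio, the computation is already done in the proof of Proposition \ref{FS-KE}, equations \eqref{g22}, \eqref{add}, and \eqref{ratio-Ricci-feffsgo-metric}: the limit is $-3/2$.

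For the $(1,\bar1)$ ratio, I expand each term as $t\to0$. We have $g_{1\bar1}=\alpha(t)/(t(1+2r))=\frac{2}{(1+2r)t}+O(1)$, so only the $t^{-1}$ coefficient of $\operatorname{Ric}_{1\bar1}=I+II+III$ in \eqref{1bar1-Ricci} matters. At $t=0$ we have $A(0)=-2$, $\alpha(0)=2$, $B(0)=-4$, and $C(0)=2$. This gives
\[
I\sim -\frac{-4+8}{2(1+2r)t}=-\frac{2}{(1+2r)t},\qquad
II\sim -\frac{4}{2(1+2r)t}=-\frac{2}{(1+2r)t},\qquad
III\sim \frac{2}{2(1+2r)t}=\frac{1}{(1+2r)t}.
\]
Summing, $\operatorname{Ric}_{1\bar1}\sim -\frac{3}{(1+2r)t}$, so the ratio tends to $(-3)/2=-3/2$.

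The main risk is arithmetic in the leading-order coefficients of $D,E,F$ and $I,II,III$. I would double-check these against the ball case $r=0$, where the Fefferman--Szeg\H o metric is a multiple of the Bergman metric and is Kähler--Einstein with constant $-3/2$ in this normalization, consistent with $\log S\sim -2\log(\text{defining function})$. Since the leading coefficients above turn out to be independent of $r$ after normalization, this check confirms them.
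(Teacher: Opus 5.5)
Your proposal is correct and follows the paper's proof essentially step for step. You reduce to the two diagonal ratios, quote \eqref{ratio-Ricci-feffsgo-metric} for the $(2,\bar2)$ ratio, and get the $t^{-1}$ coefficients of $I,II,III$ from $A(0)=-2$, $\alpha(0)=2$, $B(0)=-4$, $C(0)=2$; then you transport to general strongly pseudoconvex boundary points with the automorphisms \eqref{auto_eggdoms}, as the paper does. Your convex-combination remark is a useful addition: it makes explicit the uniformity in $X$ that the paper leaves implicit, and that uniformity is needed because $F_*X$ varies as the point approaches the boundary.
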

\begin{proof}
Since the Ricci curvature is diagonal at \((0,w)\) and by \eqref{ratio-Ricci-feffsgo-metric}, it is enough to establish that 
    $$\lim_{\substack{t\to 0 \\ {\text{i.e., }|w|\to 1}}}\frac{\operatorname{Ric}_{1\bar1}(0,w)}{g_{1\bar1}(0,w)}=-\frac{3}{2}.$$
As $t\to 0$, a routine calculation yields the following asymptotic expansions:
\begin{align*}
        g_{1\bar 1}(0,w)&=\frac{2}{1+2r}\frac{1}{t}+O(1),\\
        I&=-\frac{2}{1+2r}\frac{1}{t}+O(1),\\
        II&=-\frac{2}{1+2r}\frac{1}{t}+O(1),\\
       III&=\frac{1}{1+2r}\frac{1}{t}+O(1).
\end{align*}
Here $I, II, \text{ and }III$ denote the three terms appearing in \eqref{1bar1-Ricci}. Then, the leading term in $\operatorname{Ric}_{1\bar1}(0,w)$ is
$$-\frac{2}{1+2r}\frac{1}{t}-\frac{2}{1+2r}\frac{1}{t}+\frac{1}{1+2r}\frac{1}{t}=-\frac{3}{1+2r}\frac{1}{t}.$$
Hence, the first part is completed. 

For any strongly pseudoconvex boundary point $(\xi_1,\xi_2)\in \partial\mathcal{D}_{2m}$ with $\xi_2\neq0$, when $(z,w)\to (\xi_1,\xi_2)$, we have 
\begin{equation}\label{trans}
    \abs{w}(1-\abs{z}^2)^{-\frac{1}{2m}}\to \abs{\xi_{2}}(1-\abs{\xi_{1}}^2)^{-\frac{1}{2m}}=1.
\end{equation}
Since Ricci curvature is invariant under the holomorphic automorphisms $F$ defined in \eqref{auto_eggdoms}, we have 
\begin{equation*}
    \operatorname{Ric}_{\operatorname{FS}}^{\mathcal{D}_{2m}}((z,w),X)=\operatorname{Ric}_{\operatorname{FS}}^{\mathcal{D}_{2m}}((0,\abs{w}(1-\abs{z}^2)^{-\frac{1}{2m}}),F_{*}X).
\end{equation*}
Hence, combining this with (\ref{trans}), 
$$\lim_{(z,w)\to (\xi_{1},\xi_{2})}\operatorname{Ric}_{\operatorname{FS}}^{\mathcal{D}_{2m}}((z,w),X)=-\frac{3}{2}.$$
This proof is completed.
\end{proof}

\subsection{Another K\"ahler metric on the egg domain}\label{another}

For \(m>1\), the function \(-\log(1-|z|^2-|w|^{2m})\) is not strictly plurisubharmonic, so the associated \((1,1)\)-form does not define a K\"ahler metric. To obtain a complete K\"ahler metric on \(\mathcal D_{2m}\) comparable to the Fefferman--Szeg\H o metric \(g_{\operatorname{FS}}^{\mathcal D_{2m}}\), we use the alternative defining function introduced by Seo \cite{Seo12}:
\[
\rho_m(z,w):=(1-|z|^2)^{\frac{1}{m}}-|w|^2.
\]
Set \(g_m^{\mathcal D_{2m}}:=-\partial_i\partial_{\bar j}\log \rho_m\). A direct computation gives
\[
g_m^{\mathcal D_{2m}}
=
\frac{(1-|z|^2)^{\frac{1}{m}-2}}{m\rho_m^2}
\begin{pmatrix}
\rho_m+\frac{1}{m}|z|^2|w|^2 & w\bar z(1-|z|^2)\\
\bar w z(1-|z|^2) & m(1-|z|^2)^2
\end{pmatrix},
\]
and
\[
\det g_m^{\mathcal D_{2m}}
=
\frac{\frac{1}{m}(1-|z|^2)^{\frac{2}{m}-2}}{\rho_m^3}.
\]
Consequently,
\[
\operatorname{Ric}(g_m^{\mathcal D_{2m}})
=
-\frac{3(1-|z|^2)^{\frac{1}{m}-2}}{m\rho_m^2}
\begin{pmatrix}
\frac{2}{3}(m-1)\rho_m^2(1-|z|^2)^{-\frac{1}{m}}+\rho_m+\frac{1}{m}|z|^2|w|^2 & w\bar z(1-|z|^2)\\
\bar w z(1-|z|^2) & m(1-|z|^2)^2
\end{pmatrix}.
\]
The formulas for \(g_m^{\mathcal D_{2m}}\) and its Ricci curvature (cf. \cite{sha26}) immediately yield the following rigidity result.

\begin{proposition}(\cite{sha26})\label{gm Kahler}
    The metric \(g_m^{\mathcal D_{2m}}\) is K\"ahler--Einstein if and only if \(m=1\).
\end{proposition}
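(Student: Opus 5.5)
The plan is to test the Einstein equation $\operatorname{Ric}(g_m^{\mathcal D_{2m}})=\lambda\, g_m^{\mathcal D_{2m}}$ directly, using the two explicit matrices displayed just above. Both matrices carry the same prefactor $(1-|z|^2)^{\frac1m-2}/(m\rho_m^2)$. Their off-diagonal entries and their $(2,\bar2)$-entries also agree up to the factor $-3$. So comparing the $(2,\bar2)$ entries (or the off-diagonal ones where $z\bar w\neq0$) forces $\lambda=-3$, since $m(1-|z|^2)^2\neq 0$ on $\mathcal D_{2m}$.

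With $\lambda=-3$ fixed, I would compare the $(1,\bar1)$ entries. After cancelling the common prefactor, the Einstein condition becomes
\[
\tfrac{2}{3}(m-1)\rho_m^2(1-|z|^2)^{-\frac1m}+\rho_m+\tfrac1m|z|^2|w|^2=\rho_m+\tfrac1m|z|^2|w|^2 ,
\]
that is, $(m-1)\rho_m^2(1-|z|^2)^{-1/m}=0$ on $\mathcal D_{2m}$. Since $\rho_m>0$ inside the domain, evaluating at the single point $(0,0)$, where $\rho_m=1$, already gives $m=1$.

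For the converse, when $m=1$ we have $\rho_1=1-|z|^2-|w|^2$. Then $g_1$ is the standard Bergman-type metric on $\mathbb B^2$, and the displayed formula yields $\operatorname{Ric}=-3g_1$ because the extra term carries the factor $(m-1)$.

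The only step that needs real care is the correctness of the displayed formulas for $\det g_m^{\mathcal D_{2m}}$ and $\operatorname{Ric}$. To check them, I would write $g_m=-\partial\bar\partial\log\rho_m$ and recompute $\det g_m$ using the Schur complement. I would then use $\operatorname{Ric}=-\partial\bar\partial\log\det g_m = 3\,\partial\bar\partial\log\rho_m-\left(\tfrac{2}{m}-2\right)\partial\bar\partial\log(1-|z|^2)$, which equals $-3g_m+\tfrac{2(m-1)}{m}\,\partial\bar\partial\log(1-|z|^2)$. This last identity makes the conclusion transparent. The extra term is $-\tfrac{2(m-1)}{m}(1-|z|^2)^{-2}\,dz\,d\bar z$, which is nonzero exactly when $m\neq1$. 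Being supported only in the $dz\,d\bar z$ component, it cannot be absorbed into a multiple of $g_m$ with $\lambda=-3$, and $\lambda$ was already forced to be $-3$ by the $(2,\bar2)$ entries.
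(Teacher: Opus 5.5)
Your proposal is correct and follows the paper's route. The paper gives no separate argument: it states that the displayed formulas for $g_m^{\mathcal D_{2m}}$ and $\operatorname{Ric}(g_m^{\mathcal D_{2m}})$ immediately yield the result, and your entrywise comparison is exactly that reading. The $(2,\bar 2)$ entries force $\lambda=-3$, and then the $(1,\bar 1)$ entries leave the term $(m-1)\rho_m^2(1-|z|^2)^{-1/m}$, which must vanish. Your identity $\operatorname{Ric}=-3g_m^{\mathcal D_{2m}}+\tfrac{2(m-1)}{m}\,\partial\bar\partial\log(1-|z|^2)$, derived from $\det g_m^{\mathcal D_{2m}}=\tfrac1m(1-|z|^2)^{\frac2m-2}\rho_m^{-3}$, checks out and is a cleaner way to certify the displayed Ricci matrix.
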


We now compare the Fefferman--Szeg\H o metric with \(g_m^{\mathcal D_{2m}}\). When \(m=1\), the two metrics coincide up to a constant factor. Otherwise, they are not proportional, as shown below.

\begin{proposition}\label{FS-gm}
    If \(g_{\operatorname{FS}}^{\mathcal D_{2m}}=\lambda g_m^{\mathcal D_{2m}}\) for some \(\lambda>0\), then \(m=1\).
\end{proposition}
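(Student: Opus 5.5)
The plan is to compare the two metrics only on the slice $\{z=0\}$, where both are diagonal and explicit. If $g_{\operatorname{FS}}^{\mathcal D_{2m}}=\lambda g_m^{\mathcal D_{2m}}$ on $\mathcal D_{2m}$, then in particular the $(2,\bar 2)$-components agree up to the factor $\lambda$ at every point $(0,w)$ with $|w|<1$. So it suffices to show that the ratio of these components is a non-constant function of $|w|$ unless $m=1$.

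First I evaluate $g_m^{\mathcal D_{2m}}$ at $(0,w)$ from the matrix formula displayed above. With $v=|w|^2$ we have $1-|z|^2=1$ and $\rho_m=1-v$, and the off-diagonal entries vanish because they carry the factor $\bar z$ or $z$. This gives
\[
g_m^{\mathcal D_{2m}}(0,w)=\frac{1}{m(1-v)^2}\begin{pmatrix}1-v & 0\\ 0 & m\end{pmatrix},\qquad\text{so}\qquad (g_m)_{2\bar 2}(0,w)=\frac{1}{(1-v)^2}.
\]
Next I rewrite this in the variable $t=\frac{1-v}{1-rv}$ of Theorem \ref{FSmetric-Ricci}. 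The identity $1-v=\frac{t(1-r)}{1-rt}$, already used in that proof, yields
\[
(g_m)_{2\bar 2}(0,w)=\frac{(1-rt)^2}{t^2(1-r)^2}.
\]
Comparing with the $(2,\bar2)$-entry of $g_{\operatorname{FS}}^{\mathcal D_{2m}}(0,w)$ in Theorem \ref{FSmetric-Ricci}, the ratio is exactly
\[
\frac{(g_{\operatorname{FS}})_{2\bar 2}(0,w)}{(g_m)_{2\bar 2}(0,w)}=\beta(t)=2-rt^2 .
\]

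The proportionality hypothesis therefore forces $2-rt^2=\lambda$ for all $t$ in the range of $v\mapsto\frac{1-v}{1-rv}$ on $[0,1)$. Since $0\le r<1$, this map is continuous and strictly decreasing, taking the value $1$ at $v=0$ and tending to $0$ as $v\to1$, so its range is all of $(0,1]$. For example, comparing $t=1$ (giving $\lambda=2-r$) with the limit $t\to0$ (giving $\lambda=2$) yields $r=0$, that is, $m=1$.

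As a consistency check, when $r=0$ the $(1,\bar1)$-entries also agree up to the same factor: $\alpha(t)/t=2/t$ and $(g_m)_{1\bar1}=1/(1-v)=1/t$, so both ratios equal $2$. There is no real obstacle in this argument. The only care needed is verifying the slice values of $g_m$ and the change of variables to $t$, both of which are already available from the proof of Theorem \ref{FSmetric-Ricci}.
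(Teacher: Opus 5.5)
Your proof is correct, but it takes a different route from the paper's.

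\textbf{The paper's route.} It works with potentials. Proportionality makes $\log S_{\mathcal D_{2m}}+\lambda\log\rho_m$ pluriharmonic. A pluriharmonic function depending only on $|z|,|w|$ on a complete Reinhardt domain is constant, so $S_{\mathcal D_{2m}}\rho_m^{\lambda}$ is constant. Comparing blow-up rates at a strongly pseudoconvex boundary point gives $\lambda=2$. Restricting to $\{w=0\}$ then leaves $(1-|z|^2)^{\frac1m-\frac{4m-1}{3m}}\equiv 1$, which forces $m=1$.

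\textbf{Your route.} You compare a single metric coefficient pointwise on the slice $\{z=0\}$, and the computation checks out. In fact, since $\Psi(0,v)=\log c+\log\bigl((m+2)-(m-1)v\bigr)-2\log(1-v)$, one gets directly
$(g_{\operatorname{FS}})_{2\bar2}(0,w)=\Psi_v+v\Psi_{vv}=\frac{2}{(1-v)^2}-\frac{r}{(1-rv)^2}$.
So the ratio $2-r\frac{(1-v)^2}{(1-rv)^2}=2-rt^2$ can be read off without the $t$-substitution.

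\textbf{What each buys.} Your argument is purely local and elementary. It needs neither the Reinhardt-domain lemma (which the paper asserts without proof) nor any boundary asymptotics of the kernel. It is also quantitative: for $m>1$ the ratio of the $(2,\bar2)$-entries on $\{z=0\}$ fills out exactly $[2-r,2)$.

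The paper's argument never differentiates the kernel and works straight from the closed form. It is the same template it reuses for the Bergman comparison in Proposition \ref{FS-gB}. Its final contradiction comes from the $z$-dependence along $\{w=0\}$, whereas yours comes from the $w$-direction along $\{z=0\}$.
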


\begin{proof}
    Assume \(g_{\operatorname{FS}}^{\mathcal D_{2m}}=\lambda g_m^{\mathcal D_{2m}}\). 
    Hence
    \[
  \mathscr{F}:=\log S_{\mathcal D_{2m}}+\lambda\log \rho_m
    \]
    is pluriharmonic on \(\mathcal D_{2m}\). Both \(S_{\mathcal D_{2m}}\) and \(\rho_m\) depend only on \(|z|\) and \(|w|\), so \(\mathscr{F}\) is a radial pluriharmonic function. A radial pluriharmonic function on the complete Reinhardt domain is necessarily constant; hence \(\mathscr{F}\equiv C\) for some constant \(C\).

    Using \eqref{diag_sgoker} and the constancy of \(\mathscr{F}\) gives
    \begin{equation}\label{ratio-sgo-rhom}
        \frac{(m+2)(1-|z|^2)^{\frac{1}{m}}+(1-m)|w|^2}
    {(m+2)(1-|z|^2)^{\frac{4m-1}{3m}}\rho_m(z,w)^{2-\lambda}}
    =1 
    \end{equation}
    for all \((z,w)\in\mathcal D_{2m}\), after absorbing constants.

    Let \((z,w)\) approach a strongly pseudoconvex boundary point of $\partial\mathcal{D}_{2m}$ where the numerator in \eqref{ratio-sgo-rhom} does not vanish (e.g. a point with \(\rho_m\to 0\) but \(|w|>0\)). If \(0<\lambda<2\), the denominator tends to \(0\), so the left-hand side of \eqref{ratio-sgo-rhom} tends to \(+\infty\), a contradiction. If \(\lambda>2\), the denominator tends to \(+\infty\), so the left-hand side tends to \(0\), again a contradiction. Hence \(\lambda=2\).

    With \(\lambda=2\), set \(w=0\) in \eqref{ratio-sgo-rhom}, then it reduces to
    \[
    (1-|z|^2)^{\frac{1}{m}-\frac{4m-1}{3m}}=1
    \qquad\text{for all } |z|<1.
    \]
    Therefore \(\frac{1}{m}-\frac{4m-1}{3m}=0\), which forces \(m=1\). 
\end{proof}
\subsection{Comparison with the Bergman metric}

\begin{proposition}\label{FS-gB}
    If \(g_{\operatorname{B}}^{\mathcal D_{2m}}=\lambda g_{\operatorname{FS}}^{\mathcal D_{2m}}\) for some \(\lambda>0\), then \(m=1\).
\end{proposition}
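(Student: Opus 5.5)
Assume \(g_{\operatorname{B}}^{\mathcal D_{2m}}=\lambda g_{\operatorname{FS}}^{\mathcal D_{2m}}\) with \(\lambda>0\). I would argue exactly as in the proof of Proposition \ref{FS-gm}. The function
\[
\mathscr G:=\log K_{\mathcal D_{2m}}-\lambda\log S_{\mathcal D_{2m}}
\]
is then pluriharmonic on \(\mathcal D_{2m}\). By \eqref{diag_bergker} and \eqref{diag_sgoker}, both diagonal kernels depend only on \(|z|\) and \(|w|\), so \(\mathscr G\) is a radial pluriharmonic function on a complete Reinhardt domain. Such a function is constant: expand it on a polydisc around the origin as \(\operatorname{Re}\) of a holomorphic power series, and rotation invariance kills every nonconstant monomial. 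Hence \(K_{\mathcal D_{2m}}=C\,S_{\mathcal D_{2m}}^{\lambda}\) on \(\mathcal D_{2m}\) for some constant \(C>0\).

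Next I rewrite this identity in terms of \(I=(1-|z|^2)^{1/m}\) and \(v=|w|^2\), using \(M=I-v\), \(N=(m+2)I-(m-1)v\) and \(N'=(m+1)I-(m-1)v\). It reads
\[
\frac{N'}{M^3 I^{2m-1}}=C'\,\frac{N^\lambda}{I^{\lambda(4m-1)/3}M^{2\lambda}}.
\]
First fix \(z=0\), so \(I=1\), and let \(v\to1^-\), so \(M\to0\). Both \(N'\) and \(N\) tend to \(3\neq0\). Comparing the blow-up orders in \(M\) therefore forces \(3=2\lambda\), i.e. \(\lambda=3/2\). This is consistent with \(SK_{\mathcal D_{2m}}=S^3/K^2\) being the natural invariant.

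With \(\lambda=3/2\) the identity says \(SK_{\mathcal D_{2m}}\) is constant, and I finish with either of two routes. The quickest is \eqref{SK:cptform}: \(SK_{\mathcal D_{2m}}\) equals the function \(f(\tau)\) from Proposition \ref{SK:bdd}. As \((z,w)\) ranges over \(\mathcal D_{2m}\), \(\tau=|w|^2/(1-|z|^2)^{1/m}\) covers all of \([0,1)\), and \(f\) is strictly decreasing for \(m>1\). So constancy forces \(m=1\); Proposition \ref{SK-lim} already shows the two boundary limits differ when \(m>1\).

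Alternatively, set \(w=0\). Then \(N=(m+2)I\), \(N'=(m+1)I\) and \(M=I\), and the identity reduces to a single power of \((1-|z|^2)\) being constant. The exponents are
\[
1-3-(2m-1)=-(2m+1)
\quad\text{and}\quad
\tfrac32\Bigl(1-2-\tfrac{4m-1}{3}\Bigr)=-(2m+1).
\]
These agree, so \(w=0\) gives no information. That is why I would use the \(\tau\)-dependence instead: comparing the ratio \(N'^2/N^3\), which is a nonconstant function of \(v/I\) unless \(m=1\).

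The main obstacle is only the justification that a radial pluriharmonic function is constant, together with checking that \(N\) and \(N'\) do not vanish near the chosen boundary points. Both are easy given the explicit formulas.
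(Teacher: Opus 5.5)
Your proof is correct and is essentially the paper's argument. Both proofs pin down $\lambda=3/2$ from the slice $\{z=0\}$ as $|w|\to1$, then contradict constancy of $SK_{\mathcal D_{2m}}=f(\tau)$ using the strict monotonicity of $f$ for $m>1$. The only difference is the order of steps: you first apply the radial-pluriharmonic-is-constant fact globally and compare blow-up orders of $K/S^{\lambda}$ directly, whereas the paper uses the Laplacian of $\log(1-|w|^2)$ on the slice and only afterwards concludes constancy there. There is one harmless slip: at $z=0$, $N'\to 2$ (not $3$) as $v\to1^-$, which does not affect the argument.
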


\begin{proof}
    Assume \(g_{\operatorname{B}}^{\mathcal D_{2m}}=\lambda g_{\operatorname{FS}}^{\mathcal D_{2m}}\). 
    Hence
    \[
    \Upsilon:=\log K_{\mathcal D_{2m}}-\lambda\log S_{\mathcal D_{2m}}
    \]
    is pluriharmonic on \(\mathcal D_{2m}\).

    Using the diagonal values of $K_{\mathcal{D}_{2m}}$ and $S_{\mathcal{D}_{2m}}$ in \eqref{diag_bergker} and \eqref{diag_sgoker}, it follows that
    \begin{multline}\label{logSK}
        \Upsilon(z,w)=C_0
    +\log\bigl((m+1)I-(m-1)v\bigr)
    -\lambda\log\bigl((m+2)I+(1-m)v\bigr)
    \\+(-3+2\lambda)\log(I-v)
    +\left(-(2m-1)+\lambda\frac{4m-1}{3}\right)\log I
    \end{multline}
    where \(v=|w|^2\), \(I=(1-|z|^2)^{\frac{1}{m}}\).
    
    Restricting to \(\{z=0\}\) (so \(I=1\)) gives
    \begin{equation}\label{logSK-on-z=0}
        \Upsilon(0,w)=C_0
    +\log\bigl((m+1)-(m-1)v\bigr)
    -\lambda\log\bigl((m+2)+(1-m)v\bigr)
    +(-3+2\lambda)\log(1-v).
    \end{equation}

    Since \(\Upsilon\) is pluriharmonic, its restriction to \(\{z=0\}\) is harmonic in \(w\). The first two logarithmic terms in \eqref{logSK-on-z=0} are smooth on the closed unit disk, while
    \[
    \Delta_w\log(1-|w|^2)=-\frac{4}{(1-|w|^2)^2}\to -\infty \qquad (|w|\to 1).
    \]
    Hence the coefficient of \(\log(1-v)\) in \eqref{logSK-on-z=0} must vanish, so \(\lambda=\frac{3}{2}\).

    Thus \(\Upsilon:=\log(S^3/K^2)\) is pluriharmonic. Restricting to \(\{z=0\}\),
    \[
    \Upsilon(0,w)=\log f(|w|^2),
    \qquad
    f(x):=\frac{1}{6^3\pi^2}\,
    \frac{\bigl((m+2)+(1-m)x\bigr)^3}
    {\bigl((m+1)-(m-1)x\bigr)^2},\quad x\in[0,1).
    \]
    Since \(\Upsilon\) is harmonic in \(w\), \(\log f(|w|^2)\) is a radial harmonic function on the unit disk, hence constant. Therefore \(f\) is constant on \([0,1)\).

    But for \(m>1\),
    \[
    f'(x)
    =-\frac{1}{6^3\pi^2}\frac{(m-1)^2[(m+2)+(1-m)x]^2(1-x)}{[(m+1)-(m-1)x]^3}
    <0,
    \]
    so \(f\) is strictly decreasing. Thus \(f\) cannot be constant unless \(m=1\), which completes the proof.
\end{proof}

\section{\texorpdfstring{$L^2$-cohomology for $g_{\operatorname{FS}}^{D_{2m}}$}{L2-COHOMOLOGY FOR gFS{D2m}}}

 In this section, we prove Theorem \ref{van-L2-coho}, which is inspired from \cite{Don97}. Let $\Xi_2^k(\mathcal{D}_{2m})$ denote the space of square-integrable $k$-forms on $\mathcal{D}_{2m}$ with respect to $g_{\operatorname{FS}}^{\mathcal{D}_{2m}}$. We then have the differential complex
\[
\Xi_2^0(\mathcal{D}_{2m}) \xrightarrow{d_0} \Xi_2^1(\mathcal{D}_{2m}) \xrightarrow{d_1} \Xi_2^2(\mathcal{D}_{2m}) \xrightarrow{d_2}
\cdots \xrightarrow{d_{2n-1}}\Xi_{2}^{2n}(\mathcal{D}_{2m})\xrightarrow{d_{2n}}0\]
and the associated $L^2$-cohomology groups are defined by
\[
H_{2}^k(\mathcal{D}_{2m}) := \ker d_k \, / \, \overline{\mathrm{Im}(d_{k-1})},
\]
where the closure is taken in the $L^2$-norm induced by $g_{\operatorname{FS}}^{\mathcal{D}_{2m}}$. Since $g_{\operatorname{FS}}^{\mathcal{D}_{2m}}$ is complete K\"ahler metric, every $L^2$-cohomology group admits a unique harmonic representative. Furthermore, we have
\[
H_2^k(\mathcal{D}_{2m}) \cong \mathcal{H}_2^k(\mathcal{D}_{2m}),
\]
where $\mathcal{H}_2^k(\mathcal{D}_{2m})$ denotes the space of square‑integrable harmonic $k$-forms. Therefore, for each bidegree $(p,q)$, we have the following isomorphism
\[
H_2^{p,q}(\mathcal{D}_{2m}) \cong \mathcal{H}_2^{p,q}(\mathcal{D}_{2m}),
\]
and the space of harmonic forms decomposes as
\[
\mathcal{H}_2^k(\mathcal{D}_{2m}) = \bigoplus_{p+q=k} \mathcal{H}_2^{p,q}(\mathcal{D}_{2m}).
\]
We require the following vanishing result due to Donnelly \cite{d94} concerning 
$L^2$-cohomology away from the middle degree.
\begin{theorem}\label{donelly}
Let $(M,ds^2)$ be a complete K\"ahler manifold of complex dimension $n$ with K\"ahler form $\omega$. If there exists a $1$-form $\eta$ such that $\omega = d\eta$ with $\eta$ is bounded in the supremum norm, then for all $ k \neq n$, we have
\[
\mathcal{H}_2^k(M) = 0.
\]
\end{theorem}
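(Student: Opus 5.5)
The plan is Gromov's ``Kähler hyperbolicity'' argument. Let $L$ be the Lefschetz operator $L\alpha=\omega\wedge\alpha$. We combine three facts: $L$ is pointwise injective below the middle degree, $L$ commutes with the Hodge Laplacian on a Kähler manifold, and $\omega=d\eta$ with $\eta$ bounded makes $L$ of a closed $L^2$ form $L^2$-exact. The degrees $k>n$ are then handled by Hodge duality.

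\textbf{Step 1: the case $k<n$.} Fix $\alpha\in\mathcal H_2^k(M)$ with $k<n$.
\begin{itemize}
\item By Gaffney's theorem on a complete manifold, an $L^2$ harmonic form satisfies $d\alpha=0$ and $d^*\alpha=0$.
\item Since $\omega$ is parallel, $|L\alpha|\le C|\alpha|$ pointwise, so $L\alpha\in L^2$.
\item The Kähler identities give $[L,\Delta_d]=0$, so $L\alpha$ is an $L^2$ harmonic $(k+2)$-form. Hence $L\alpha$ is again closed and coclosed.
\item Since $d\alpha=0$, we have $L\alpha=d\eta\wedge\alpha=d(\eta\wedge\alpha)$. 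Here $\beta:=\eta\wedge\alpha$ satisfies $|\beta|\le\|\eta\|_\infty|\alpha|$, so $\beta\in L^2$. Also $d\beta=L\alpha\in L^2$.
\end{itemize}

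\textbf{Step 2: $L\alpha=0$.} I would show $\|L\alpha\|^2=\langle d\beta,L\alpha\rangle=0$. This is the main technical point, because $\beta$ is only $L^2$ and not compactly supported. I would use completeness to choose cutoff functions $\chi_R$ with $\chi_R=1$ on $B(x_0,R)$, support in $B(x_0,2R)$, and $|d\chi_R|\le 1/R$. Then
\[
\langle \chi_R\,d\beta, L\alpha\rangle=\langle d(\chi_R\beta),L\alpha\rangle-\langle d\chi_R\wedge\beta,L\alpha\rangle .
\]
The first term equals $\langle\chi_R\beta,d^*L\alpha\rangle=0$. The second is bounded by $R^{-1}\|\beta\|\,\|L\alpha\|\to0$. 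Letting $R\to\infty$ gives $\|L\alpha\|^2=0$, so $L\alpha=0$.

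\textbf{Step 3: pointwise injectivity.} By linear algebra on a Hermitian vector space of complex dimension $n$, the map $L:\Lambda^k\to\Lambda^{k+2}$ is injective for $k\le n-1$; this is the hard Lefschetz statement $L^{n-k}:\Lambda^k\cong\Lambda^{2n-k}$. Applying this at each point to $L\alpha=0$ gives $\alpha\equiv0$. Hence $\mathcal H_2^k(M)=0$ for $k<n$.

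\textbf{Step 4: the case $k>n$.} The Hodge star $*$ is a pointwise isometry and commutes with $\Delta_d$ up to sign. It therefore maps $\mathcal H_2^k(M)$ isomorphically onto $\mathcal H_2^{2n-k}(M)$. When $k>n$ we have $2n-k<n$, which is zero by Step 3, so $\mathcal H_2^k(M)=0$ for all $k\neq n$.

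The step I expect to need the most care is the integration by parts in Step 2. It relies on completeness through the cutoff functions, and on knowing beforehand that $L\alpha$ is coclosed. That in turn uses Gaffney's result that $L^2$ harmonic forms on a complete manifold are closed and coclosed, together with the commutation $[L,\Delta]=0$.
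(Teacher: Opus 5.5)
Your proof is correct. The paper gives no proof of its own here: it quotes the result from Donnelly \cite{d94}, and the result goes back to Gromov's K\"ahler hyperbolicity argument. Your proposal is exactly that standard argument. On a complete manifold, an $L^2$ harmonic form of the shape $d\beta$ with $\beta\in L^2$ must vanish; this is your cutoff computation in Step 2. You apply it to $\omega\wedge\alpha=d(\eta\wedge\alpha)$, then use pointwise injectivity of $L$ below the middle degree, and Hodge duality handles $k>n$.
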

To apply this result to $g_{\operatorname{FS}}^{\mathcal{D}_{2m}}$, we define the $1$-form $\eta = \Theta$ as follows:
\[
\Theta := -i \Big(\pa_z\log S_{\mathcal{D}_{2m}}(z)\,dz+\pa_w\log S_{\mathcal{D}_{2m}}(z)\,dw\Big),\qquad\omega = d\Theta,
\]
where 
\(
\bar\partial\partial=-\partial\bar\partial.
\)
To verify the hypothesis of Theorem~\ref{donelly}, we need to check that $\Theta$ is bounded in the supremum norm with respect to Fefferman-Szeg\H{o} metric $g_{\operatorname{FS}}^{\mathcal{D}_{2m}}$. This is equivalent to showing that the ratio
\begin{equation}\label{norm}
    \frac{|\Theta_z(X)|^2}{g_{\operatorname{FS},z}^{\mathcal{D}_{2m}}(X,X)}
\end{equation}
remains bounded for all $z\in\mathcal{D}_{2m}$ and non-zero tangent vectors $X\in T_z\mathcal{D}_{2m}$.


By the transformation rule for the Fefferman-Szeg\H{o} kernel (Proposition \ref{FS-transformation}), for any $T\in \operatorname{Aut}(\mathcal{D}_{2m})$, which satisfies Definition \ref{biholo_cond}, the $1$-form $\Theta$ transforms as:
\begin{equation*}
    (T^*\Theta)(z)=\Theta(z)+\frac{2}{3}\,\partial\log \det J_{\mathbb{C}}T(z).
\end{equation*}
Moreover, the Fefferman-Szeg\H{o} metric $g_{\operatorname{FS}}^{\mathcal{D}_{2m}}$ is invariant under $T$, i.e., 
$g_{\operatorname{FS}}^{\mathcal{D}_{2m}}=T^*g_{\operatorname{FS}}^{\mathcal{D}_{2m}}.$
Hence, we obtain the following transformation rule for (\ref{norm}):
\begin{equation}\label{pull1}
\frac{|\Theta_{T(z)}(T_*X)|^2}{g_{\operatorname{FS},T(z)}^{\mathcal{D}_{2m}}(T_*X,T_*X)}=\frac{|\Theta_z(X)+\frac{2}{3}\,\partial\log \det J_{\mathbb{C}}T(z)|^2}{g_{\operatorname{FS},z}^{\mathcal{D}_{2m}}(X,X)}.
\end{equation}

For any point \((z_0,w_0)\in\mathcal{D}_{2m}\), let $T$ be the inverse of the automorphism $F$ defined in \eqref{auto_eggdoms}, such that $T(0,w)=(z_{0},w_{0})$ with $w=\abs{w_{0}}(1-\abs{z_{0}}^{2})^{-1/(2m)}$. Hence, (\ref{pull1}) implies the following estimate:
\begin{equation}\label{esi}
    \frac{|\Theta_{(z_0,w_0)}(T_*X)|^2}{g_{\operatorname{FS},(z_0,w_0)}^{\mathcal{D}_{2m}}(T_*X,T_*X)}\le 2\left(
\frac{|\Theta_{(0,w)}(X)|^2}{g_{\operatorname{FS},(0,w)}^{\mathcal{D}_{2m}}(X,X)}+\left(\frac{2}{3}\right)^2\frac{|\partial\log\det J_{\mathbb{C}}T(0,w)|^2}{g_{\operatorname{FS},(0,w)}^{\mathcal{D}_{2m}}(X,X)}\right).
\end{equation}

Now, we prove that the left-hand side of \eqref{estimate} is bounded. The boundedness of the first term on the right-hand side of \eqref{esi} is follows from the fact that \((0,w)\) lies in the slice \(\{z=0\}\). Indeed, 
from \eqref{Psi-av}, we have
\[\partial_z\log S_{\mathcal{D}_{2m}}|_{z=0}=0\qquad\partial_w\log S_{\mathcal{D}_{2m}}|_{z=0}=\overline{w}~\Psi_v\] $\Psi_v=\partial_v\Psi|_{a=0}$ and $a=|z|^2,~v=|w|^2$. So, the $1$-form $\Theta=-i\partial\log S_{\mathcal{D}_{2m}}$ at the slice $\{z=0\}$ satisfies
\[\Theta_{(0,w)}=-i\overline{w}\Psi_v dw,\] and hence
\[\frac{|\Theta_{(0,w)}(X)|^2}{g_{\operatorname{FS},(0,w)}^{\mathcal{D}_{2m}}(X,X)}\leq\frac{|\overline{w}\Psi_v|^2|X_2|^2}{g_{2\overline{2}}(0,w)|X_2|^2}=\frac{|\overline{w}\Psi_v|^2}{g_{2\overline{2}}(0,w)}.\]
From straightforward computations, we know
\[\Psi_v=-\frac{m-1}{(m+2)-(m-1)v}+\frac{2}{1-v},\] and recall that \[g_{2\overline{2}}(0,w)=\frac{(2-rt^2)(1-rt)^2}{t^2(1-r)^2}\] where $r=(m-1)/(m+2)$ and $t=(1-v)/(1-rv)$. Using $1-rt=(1-r)/(1-rv)$ we get $(1-rt)/t=(1-r)/(1-v)$, this implies $\text{ as }v\to 1^-$,
\begin{align*}
    g_{2\overline{2}}(0,w)=\frac{(1-r)^2}{(1-v)^2}\cdot\frac{2-rt^2}{(1-r)^2}=\frac{2-rt^2}{(1-v)^2}&\sim\frac{2}{(1-v)^2},\\
    |\Theta_{(0,w)}(X)|^2=v|\Psi_v|^2&\sim\frac{4}{(1-v)^2},
\end{align*}
where $f(v)\sim g(v)$ as $v\to 1^-$ means $f/g\to 1$ as $v\to 1^-$. This implies, \[\lim_{v \to 1^-} \frac{|\overline{w}\Psi_v|^2}{g_{2\overline{2}}(0,w)}=  2.\]
Consequently, the first term on the right-hand side of \eqref{esi} is bounded near the boundary. Since this term is smooth in the interior of $\mathcal{D}_{2m}$, we conclude that it is uniformly bounded on the entire domain $\mathcal{D}_{2m}$. 

For the second term, we have the following estimate:
\begin{lemma}\label{estimate}
    For the automorphism $T$ mapping $(0,w)$ to $(z_0,w_0)$, the term $\partial\log\det J_\mathbb{C}T$
    is uniformly bounded in the Fefferman-Szeg\H{o} metric norm.
\end{lemma}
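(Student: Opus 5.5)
We plan to compute $\det J_{\mathbb C}T$ explicitly and estimate its logarithmic derivative against the metric on the slice $\{z=0\}$, where $g_{\operatorname{FS}}$ is diagonal and known from Theorem \ref{FSmetric-Ricci}. Here $T=F^{-1}$ with $F$ from \eqref{auto_eggdoms}. Hence $T$ has the same form:
\[
T(z,w)=\left(\frac{z+z_0}{1+\bar z_0 z},\; \mu^{-1}\frac{(1-|z_0|^2)^{\frac1{2m}}}{(1+\bar z_0 z)^{\frac1m}}\,w\right).
\]
This is the disc automorphism in $z$ composed with a $z$-dependent rescaling of $w$. Because $T$ is triangular, its Jacobian determinant is the product of the diagonal entries:
\[
\det J_{\mathbb C}T(z,w)=\frac{1-|z_0|^2}{(1+\bar z_0 z)^2}\cdot \mu^{-1}\frac{(1-|z_0|^2)^{\frac1{2m}}}{(1+\bar z_0 z)^{\frac1m}}.
\]
Hence $\log\det J_{\mathbb C}T=\text{const}-\bigl(2+\tfrac1m\bigr)\log(1+\bar z_0 z)$. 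This depends only on $z$, so
\[
\partial\log\det J_{\mathbb C}T(0,w)=-\Bigl(2+\frac1m\Bigr)\bar z_0\,dz,
\]
with no $dw$ component.

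Next we bound the quotient $|\partial\log\det J_{\mathbb C}T(0,w)(X)|^2/g_{\operatorname{FS},(0,w)}(X,X)$. Since the metric at $(0,w)$ is diagonal, this quotient is at most
\[
\Bigl(2+\tfrac1m\Bigr)^2|z_0|^2\,\frac{1}{g_{1\bar1}(0,w)}\le \Bigl(2+\tfrac1m\Bigr)^2\frac{1}{g_{1\bar 1}(0,w)}.
\]
We used $|z_0|<1$ here, which holds because $(z_0,w_0)\in\mathcal D_{2m}$. The remaining task is a uniform positive lower bound for $g_{1\bar1}(0,w)=\alpha(t)/(t(1+2r))$ over $t\in(0,1]$. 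Note that $t=(1-v)/(1-rv)$ ranges in $(0,1]$ as $v\in[0,1)$, because $0\le r<1$. For $t\in(0,1]$ we have $\alpha(t)=2+rt+rt^2\ge 2$ and $t(1+2r)\le 3$, so $g_{1\bar1}(0,w)\ge 2/3$. This gives the uniform bound $\tfrac32(2+\tfrac1m)^2$, independent of $(z_0,w_0)$ and $X$.

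The main obstacle is the bookkeeping in the transformation rule \eqref{pull1}. One must make sure that the correction term $\tfrac23\partial\log\det J_{\mathbb C}T$ is evaluated at the slice point $(0,w)$ and paired with the vector $X$ at that point, and that the branch of $(\det J_{\mathbb C}T)^{2/3}$ is global. The branch issue is harmless, since $1+\bar z_0 z\neq0$ on $\overline{\mathcal D}_{2m}$ and the domain is simply connected. Once that is settled, the estimate reduces to the one-line lower bound on $g_{1\bar1}$ above. Combined with the earlier bound on the first term of \eqref{esi}, this completes the boundedness of $\Theta$.
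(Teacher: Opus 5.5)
Your proposal is correct and follows essentially the same route as the paper. You use the triangular form of $T=F^{-1}$ to get $\partial\log\det J_{\mathbb C}T(0,w)=-\bigl(2+\tfrac1m\bigr)\bar z_0\,dz$, and then bound its $g_{\operatorname{FS}}$-norm by $\bigl(2+\tfrac1m\bigr)|z_0|\sqrt{g^{1\bar1}(0,w)}$ using $|z_0|<1$ and the diagonal metric on the slice $\{z=0\}$. The only difference is that you make the paper's unspecified constant $C$ explicit through $g_{1\bar1}(0,w)=\alpha(t)/(t(1+2r))\ge 2/3$ for $t\in(0,1]$, which is valid.
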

\begin{proof}
For the automorphism \(T\) constructed above, a straightforward calculation reveals that the logarithmic derivative of the complex Jacobian determinant at the points $(0,w)\in\mathcal{D}_{2m}$ is given by
\[
\partial\log\det J_{\mathbb{C}}T(0,w)= -\bar{z}_{0}\Bigl(2+\frac{1}{m}\Bigr)\,dz.
\]
By the explicit expression for the metric component $g_{1\bar1}(0,w)$ in Lemma \ref{FSmetric-Ricci}, we have:
\[
\abs{dz}_{g_{FS}^{\mathcal{D}_{2m}}}=\sqrt{g^{1\bar1}(0,w)}\le C<\infty.
\]
Since $\abs{z_{0}}<1$, it follows that
\[
|\partial\log\det J_{\mathbb{C}}T(0,w)|_{g_{\operatorname{FS}}^{\mathcal{D}_{2m}}} = |\bar{z}_{0}|\Bigl(2+\frac{1}{m}\Bigr)\,|dz|_{g_{\operatorname{FS}}^{\mathcal{D}_{2m}}} \le C \Bigl(2+\frac{1}{m}\Bigr)<\infty.
\]
The proof is completed. 
\end{proof}
Combining \eqref{esi} with Lemma \ref{estimate}, we concluded that \eqref{norm} is bounded on $\mathcal{D}_{2m}$. By Theorem \ref{donelly}, the proof of Theorem \ref{van-L2-coho} is completed.

\subsection*{Acknowledgements}
The authors sincerely thank Professor Xiaoshan Li for bringing the relevant reference \cite{y25} to their attention and for his constant encouragement and invaluable support throughout the preparation of this work. They also gratefully acknowledge the financial support provided by Wuhan University. The first author was partially supported by NSFC Grant No. 12361131577, and the second author by NSFC Grant No. 12271411, both via the grant held by Professor Xiaoshan Li.


\end{document}